\newcommand{\Av}{\mathrm{Av}}
\newcommand{\tmax}{{\typewriter max}}
\newcommand{\tmin}{{\typewriter min}}
\newcommand{\tprod}{{\typewriter prod}}
\newcommand{\tsum}{{\typewriter sum}}
\newcommand{\tlcm}{{\typewriter lcm}}
\newcommand{\Hmax}{H_{\tmax}}
\newcommand{\Hmin}{H_{\tmin}}
\newcommand{\Hprod}{H_{\tprod}}
\newcommand{\Hlcm}{H_{\tlcm}}
\newcommand{\Htheta}{H_{\Theta}}
\renewcommand{\Rplus}{[0,\infty)}
\newcommand{\tplus}{[t_0,\infty)}
\newcommand{\Tplus}{[t_1,\infty)}
\begin{document}
\title[Unconventional height functions in Diophantine approximation]{Unconventional height functions in simultaneous Diophantine approximation}

\authorlior\authordavid

%\subjclass[2010]{Primary }
%\keywords{}
%\date{}
%\dedicatory{}

\begin{Abstract}
Simultaneous Diophantine approximation is concerned with the approximation of a point $\mathbf x\in\mathbb R^d$ by points $\mathbf r\in\mathbb Q^d$, with a view towards jointly minimizing the quantities $\|\mathbf x - \mathbf r\|$ and $H(\mathbf r)$. Here $H(\mathbf r)$ is the so-called ``standard height'' of the rational point $\mathbf r$. In this paper the authors ask: What changes if we replace the standard height function by a different one? As it turns out, this change leads to dramatic differences from the classical theory and requires the development of new methods. We discuss three examples of nonstandard height functions, computing their exponents of irrationality as well as giving more precise results. A list of open questions is also given.
\end{Abstract}
\maketitle

Fix $d\geq 1$, and for each function $\Theta:\N^d\to\N$ let $\Htheta:\Q^d\to\N$ be defined by the formula
\[
\Htheta\left(\frac{p_1}{q_1},\ldots,\frac{p_d}{q_d}\right) = \Theta(q_1,\ldots,q_d).
\]
Here we assume that $p_1/q_1,\ldots,p_d/q_d\in\Q$ are given in reduced form. The function $\Htheta$ will be called a \emph{height function} on $\Q^d$.

Classical simultaneous Diophantine approximation is concerned with the \emph{standard height function} $\Hlcm$, where $\tlcm:\N^d\to\N$ is the least common multiple function. Historically, this height function and its variations and generalizations (see e.g. \cite[\6VIII.5-6]{Silverman}) have played a major role in modern mathematics, not only in Diophantine approximation but also in the theories of projective varieties and elliptic curves.\Footnote{Although what we call here the ``standard height function'' is the most commonly considered height function in the field of Diophantine approximation, a slightly different height function is considered to be standard in other areas of number theory. Namely, if a rational $\pp/q\in\Q^d$ is in reduced form, then many number theorists, motivated by projective geometry, define the height of $\pp/q$ to be the number $\max(|p_1|,\ldots,|p_d|,q)$ rather than $q$. The two height functions agree on rationals in the unit cube $[0,1]^d$, as well as agreeing up to a multiplicative error term on bounded subsets of $\R^d$, so the difference is rarely significant.} The standard height function has been treated as the natural choice for a height function on $\Q^d$, to the point where no other choices were even considered. One reason for the historical emphasis on the standard height function is its connection to the lattice $\Z^d$; specifically; given $\rr\in\Q^d$, $\Hlcm(\rr)$ is the smallest number $q$ such that $\rr = \pp/q$ for some $\pp\in\Z^d$. This way of interpreting $\Hlcm$ lends itself more easily to generalizations to projective varieties and algebraic number fields; cf. \cite[Remark VIII.5.5]{Silverman}. The connection to lattices also induces a connection between the Diophantine approximation based on this height function and the dynamics of the homogeneous space $\SL_{d + 1}(\R)/\SL_{d + 1}(\Z)$; cf. \cite[Theorem 8.5]{KleinbockMargulis}.%\Footnote{Other height functions, such as those on Abelian varieties and algebraic number fields (see e.g. \cite[\6VIII.5-6]{Silverman}), have tended to be variations and/or generalizations of the ``standard height'' on $\Q^d$.}

The aim of this paper is to broaden the viewpoint of simultaneous Diophantine approximation by considering alternative height functions. Specifically, we will consider the height functions $\Hmax$, $\Hmin$, and $\Hprod$ defined by the maximum, minimum, and product functions $\tmax,\tmin,\tprod:\N^d\to\N$.\Footnote{It has been pointed out to us that there are definitions of the term ``height function'' according to which $\Hmin$ is not a height function, since its sublevelsets $\{\Hmin \leq q\}$ ($q\in\N$) are not discrete (i.e. it does not satisfy the the Northcott property on compact sets). However, for our purposes it is not important (except for one place where we must be slightly careful, see Footnote \ref{footnotedirichletdef} below) whether the sublevelsets of $\Hmin$ are discrete, and we feel that the role played by $\Hmin$ in this paper is sufficiently ``height-function-like'' (in a Diophantine approximation sense) to justify the use of the terminology.} Although these height functions are not as related to the lattice $\Z^d$ (but see the Remark after Theorem \ref{theoremminprod} for a relation between the height functions $\Hprod$ and $\Hlcm$ based on the Segre embedding), in a certain sense they are more natural than $\Hlcm$, since the functions $\tmax$, $\tmin$, and $\tprod$ are monotonic whereas $\tlcm$ is not. Thus the study of these alternative height functions will be based not as much on the study of lattices, but will take a more ``component-wise'' approach.

The authors devote a section to analyzing a certain class of functions, the class of \emph{recursively integrable functions} (denoted $\RR$), which is used in the proof of one of the main theorems. The class $\RR$ is contained in the class of integrable functions, and is similar to it in some ways. However, unlike the class of integrable functions, the class $\RR$ is not closed under either addition or scalar multiplication. Nevertheless, there are many functions $f_2$ with the property that for every $f_1\in\RR$, we have $f_1 + f_2\in\RR$.

%An interesting tool the authors introduce for the purpose of examining the Diophantine properties of $\Hmax$ is the class of \emph{recursively integrable functions}. This summation method is very fragile as multiplying a series by a constant factor can change whether it is summable. Nevertheless, it turns out to be robust enough to ignore error terms such as those which normally appear in calculus.

{\bf Acknowledgements.} The first-named author was supported in part by the Simons Foundation grant \#245708. The second-named author was supported in part by the EPSRC Programme Grant EP/J018260/1. The authors thank the referees of previous versions of this paper for their comments which helped us to write an introduction more accessible to a general audience. We thank the referee of the current version for helpful comments.

%{\bf Convention 1.} Letters in bold script (e.g. $\xx,\pp,\rr$) denote vectors in $\R^d$; $x_i$ denotes the $i$th coordinate of the vector $\xx$.

{\bf Convention 1.} For $\alpha\geq 0$, we let $\psi_\alpha(q) = q^{-\alpha}$.

{\bf Convention 2.} Given $\Theta:\N^d\to\N$ and $(q_i)_{i = 1}^d \in \N^d$, we will write
\[
\Theta_{i = 1}^d q_i := \Theta(q_1,\ldots,q_d).
\]

{\bf Convention 3.} The symbols $\lesssim$, $\gtrsim$, and $\asymp$ will denote multiplicative asymptotics. For example, $A\lesssim_K B$ means that there exists a constant $C > 0$ (the \emph{implied constant}), depending only on $K$, such that $A\leq CB$.

{\bf Convention 4.} In this paper ``increasing'' means ``nondecreasing'' and ``decreasing'' means ``nonincreasing'', unless the word ``strictly'' is added.

{\bf Convention 5.} The symbol $\triangleleft$ will be used to indicate the end of a nested proof.

\section{Main results}

Throughout, $d\geq 1$ is fixed, and $\|\cdot\|$ denotes the max norm on $\R^d$. Note that if $d = 1$, then $\Hlcm = \Hmax = \Hmin = \Hprod = H_0$, where $H_0(p/q) = q$.

We begin by recalling Dirichlet's theorem:

\begin{theorem*}[Dirichlet's Approximation Theorem] 

For each $\xx \in \R^d$, and for any $Q \in \N$, 
there exists $\pp/q\in \Q^d$ with $1\leq q \leq Q^d$ such that
\[
\left\| \xx - \frac{\pp}{q} \right\| < \frac{1}{qQ}\cdot
\]
\end{theorem*}

\begin{corollary*}[Dirichlet's Corollary]
For every $\xx \in \R^d\butnot\Q^d$, 
\[
\left\| \xx - \frac{\pp}{q}\right\| < \frac{1}{q^{1 + 1/d}} \text{ for infinitely many } \frac{\pp}{q} \in \Q^d.
\]
Equivalently,
\begin{equation}
\label{dirichletscorollary}
\| \xx - \rr_n \| < \psi_{1 + 1/d}\circ\Hlcm(\rr_n) \text{ for some sequence } \Q^d\ni \rr_n\to \xx.\footnotemark
\end{equation}
\end{corollary*}
\Footnotetext{There is a subtle distinction here: the existence of infinitely many rational points satisfying a given inequality, versus the existence of a sequence of rational points satisfying the given inequality and tending to the given point $\xx$. This distinction is important in what follows because otherwise the function $\Hmin$ would behave pathologically, since there exists a bounded region containing infinitely many points $\rr\in\Q^d$ satisfying $\Hmin(\rr) = 1$. But we do not want to say that such a sequence is a sequence of ``approximations'' of a given point $\xx$ unless the sequence actually converges to the point $\xx$ (which could only happen if $\xx$ has an integer coordinate). \label{footnotedirichletdef}}

In what follows, we consider analogues of Dirichlet's Corollary when $\Hlcm$ is replaced by one of the three height functions $\Hmax$, $\Hmin$, and $\Hprod$.

\subsection{Exponents of irrationality}
Before getting down to the details of our main theorems, we first consider ``coarse'' analogues of Dirichlet's Corollary. Specifically, we determine what the appropriate analogue of the exponent $1 + 1/d$ which appears in the formula \eqref{dirichletscorollary} should be for our nonstandard height functions. More precisely:

\begin{definition*}
Given a height function $H:\Q^d\to\N$ and a point $\xx\in\R^d\butnot\Q^d$, the \emph{exponent of irrationality} of $\xx$ is
\[
\omega_H(\xx) = \liminf_{\substack{\rr\in\Q^d \\ \rr\to\xx}} \frac{-\log\|\xx - \rr\|}{\log H(\rr)}
= \lim_{\epsilon \to 0} \inf_{\substack{\rr\in\Q^d \\ \|\xx - \rr\| \leq \epsilon}} \frac{-\log\|\xx - \rr\|}{\log H(\rr)}\cdot
\]
Equivalently, $\omega_H(\xx)$ is the supremum of all $\alpha\geq 0$ such that
\[
\left\|\xx - \rr_n\right\| < \psi_\alpha\circ H(\rr_n) \text{ for some sequence } \Q^d \ni \rr_n \to \xx.
\]
The \emph{exponent of irrationality} of the height function $H$ is the number
\[
\omega_d(H) = \inf_{\xx\in\R^d\butnot\Q^d} \omega_H(\xx).
\]
\end{definition*}

We observe that Dirichlet's Corollary implies that $\omega_d(\Hlcm) \geq 1 + 1/d$. In fact, the reverse inequality is true (and well-known):
\[
\omega_d(\Hlcm) = 1 + 1/d.
\]
This means that $1 + 1/d$ is the ``best exponent'' that can be put into formula \eqref{dirichletscorollary}.

We are now ready to state the following theorem regarding exponents of irrationality:

\begin{theorem}[Exponents of irrationality of $\Hmax$, $\Hmin$, and $\Hprod$]
\label{theoremexponents}
\begin{align} \label{exponents1}
\omega_d(\Hmax) &= \frac{d}{(d - 1)^{(d - 1)/d}} \hspace{.3 in} \text{if $d\geq 2$}\\ \label{exponents2}
\omega_d(\Hmin) &= 2\\ \label{exponents3}
\omega_d(\Hprod) &= \frac 2d\cdot
\end{align}
\end{theorem}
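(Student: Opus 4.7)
I would split the theorem into its three constituent equalities and attack each by exhibiting matching upper and lower bounds on $\omega_d(H)$, with upper bounds produced by concrete ``extremal'' choices of $\xx\in\R^d\setminus\Q^d$ and lower bounds arising from Dirichlet-style constructions valid for every such $\xx$.

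The case $\omega_d(\Hmin)=2$ is essentially soft. For the lower bound, pick any irrational coordinate of $\xx$, say $x_1$, use one-dimensional Dirichlet to obtain $p_1/q_1\to x_1$ with $|x_1-p_1/q_1|<1/q_1^2$, and for each remaining coordinate $x_i$ choose an approximation $p_i/q_i\to x_i$ with $q_i$ so large (much greater than $q_1$) that $|x_i-p_i/q_i|<1/q_1^2$; for rational $x_i=a/b$ one uses non-trivial approximants such as $(aM+1)/(bM)$ with $M$ coprime to $b$. Then $\Hmin(\rr)=q_1$ and $\|\xx-\rr\|<\Hmin(\rr)^{-2}$, so $\omega_{\Hmin}(\xx)\geq 2$. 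For the upper bound, take $\xx=(\sqrt2,\sqrt3,\dots)$ with every coordinate badly approximable, giving $\|\xx-\rr\|\gtrsim(\min_iq_i)^{-2}=\Hmin(\rr)^{-2}$.

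For $\omega_d(\Hprod)=2/d$ the upper bound is analogous: the same badly approximable $\xx$ gives $\|\xx-\rr\|\gtrsim(\min_iq_i)^{-2}\geq(\prod_iq_i)^{-2/d}$ by AM--GM. For the lower bound I would apply one-dimensional Dirichlet at a common scale $Q$ to each coordinate to produce $q_i=q_i(Q)\leq Q$ with $|x_i-p_i/q_i|<1/(q_iQ)$, so that $\|\xx-\rr\|<1/(Q\min_iq_i)$ and $\Hprod(\rr)\leq Q^d$; this delivers $\omega\geq 2/d$ immediately whenever all $q_i(Q)\asymp Q$. When $\xx$ has Liouville-type coordinates whose convergents live at incompatible scales this direct argument breaks down, and the main obstacle is to find an adequate substitute---either interpolating using semi-convergents of each $x_i$ to manufacture a common scale, or invoking a lattice/pigeonhole argument on the $\asymp K^{2}(\log K)^{d-1}$ many rationals in a unit box with $\Hprod\leq K$.

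For $\omega_d(\Hmax)=d/(d-1)^{(d-1)/d}$ both inequalities demand genuine work, since the naive Dirichlet bounds (simultaneous $d$-dimensional or coordinate-wise $1$-dimensional) only give $\omega\geq 1+1/d$, which is strictly smaller than the claimed value for $d\geq 2$. For the lower bound I would apply Minkowski's theorem to the lattice $\Lambda_\xx\subset\R^{2d}$ spanned by $(q_1,\dots,q_d,q_1x_1-p_1,\dots,q_dx_d-p_d)$ against an asymmetric box $\{|q_i|\leq Q_i,\ |q_ix_i-p_i|\leq\epsilon_i\}$ with $\prod_iQ_i\epsilon_i>1$; after ruling out degenerate lattice points with some $q_i=0$ and reducing $(p_i/q_i)$ to lowest terms, one has $\Hmax\leq\max_iQ_i$ and $\|\xx-\rr\|\leq\max_i\epsilon_i/q_i$, and the constrained optimization over the $(Q_i,\epsilon_i)$ should yield the precise exponent $d/(d-1)^{(d-1)/d}$ (of the form ``$d-1$ coordinates carry the approximation, one absorbs the slack''). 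For the matching upper bound a Liouville-style $\xx$ must be constructed whose coordinates are jointly chosen so that, along every approximating sequence $\rr_n\to\xx$, at least one denominator $q_i$ is pinned near $\Hmax(\rr_n)$, limiting the reduction gain. I expect pinning down the exact optimization that produces the unusual value $d/(d-1)^{(d-1)/d}$---and dually realizing it by an extremal $\xx$---to be the hardest step in the whole theorem.
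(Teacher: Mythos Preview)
Your treatment of $\Hmin$ is correct and in fact cleaner than the paper's: the paper runs all three height functions through a common ``data progression'' machinery, whereas your direct construction (one Dirichlet coordinate plus arbitrarily large denominators elsewhere) gives $\omega_d(\Hmin)\geq 2$ immediately, and the badly approximable upper bound is the same as the paper's. Your $\Hprod$ upper bound is likewise the paper's argument.

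The genuine gap is in the $\Hprod$ lower bound. Your coordinate-wise Dirichlet at a common scale $Q$ only gives exponent $\dfrac{\log Q + \log \min_i q_i}{\sum_i \log q_i}$, which can be as small as $1/(d-1)$ when one $q_i$ collapses. Your proposed rescue via semi-convergents does not work: for an intermediate fraction of $x$ at denominator $q$ with $q_{n-1}\ll q\ll q_n$ one has $|x-p/q|\asymp 1/(q\,q_{n-1})$, not $1/q^2$, so you cannot manufacture good approximations at an arbitrary prescribed scale. The pigeonhole suggestion is not fleshed out and I do not see how to make it work either. The paper's mechanism is quite different: it encodes $\xx$ as a \emph{data progression} $\Delta_k=(b_k^{(i)})_i$ (logs of convergent denominators, interleaved) and shows that if the Dirichlet bound with exponent $2/d$ failed then $b_k^{(i_k)}+b_{k+1}^{(i_k)}\leq 2\,\mathrm{Av}(\Delta_k)-1$, which forces $\mathrm{Var}(\Delta_{k+1})\leq \mathrm{Var}(\Delta_k)$ with a definite drop infinitely often, contradicting $\mathrm{Var}\geq 0$. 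This variance--monotonicity idea is the missing ingredient.

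For $\Hmax$ your Minkowski plan has a structural problem: the lattice you describe is a direct product of $d$ two-dimensional lattices, so Minkowski against a product box $\{|q_i|\leq Q_i,\,|q_ix_i-p_i|\leq\epsilon_i\}$ gives exactly the same information as $d$ independent one-dimensional Dirichlet applications, with the effective constraint $Q_i\epsilon_i\gtrsim 1$ for each $i$ separately (not $\prod_i Q_i\epsilon_i>1$). There is no cross-coordinate gain to optimize, and you are back to the situation you already identified as insufficient. The paper's route is again through data progressions: one shows that a worst-case progression must be (asymptotically) periodic with $A_k\asymp\gamma^k$, and for such a progression the relevant exponent is $\gamma+\gamma^{-(d-1)}$, whose minimum over $\gamma>1$ is attained at $\gamma_d=(d-1)^{1/d}$ with value $d/(d-1)^{(d-1)/d}$. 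Both the lower bound (every $\xx$) and the upper bound (constructing an extremal $\xx$ from a periodic geometric progression) come out of this single optimization, which is where the unusual value originates.
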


\begin{remark*}
The inequalities $\tmin\leq\tprod^{1/d}\leq\tmax\leq\tlcm\leq\tprod$ automatically imply that
\[
\omega_d(\Hprod) \leq \omega_d(\Hlcm) \leq \omega_d(\Hmax) \leq d \phantom{\cdot} \omega_d(\Hprod) \leq\omega_d(\Hmin).
\]
Theorem \ref{theoremexponents} shows that when $d\geq 3$, all inequalities are strict except the last. (When $d = 2$, the third inequality is also not strict.) It is also interesting to note that $\lim_{d\to\infty}\omega_d(\Hmax) = 1 = \lim_{d\to\infty}\omega_d(\Hlcm)$, so the second inequality is asymptotically an equality.
\end{remark*}

\subsection{More precise results}
We now prepare to state our main theorems. These theorems will answer the question of what the appropriate analogue of the function $\psi_{1 + 1/d}$ should be for our nonstandard height functions. More precisely:

\begin{definition*}
Given a height function $H:\Q^d\to\N$, a function $\psi:\N\to(0,\infty)$, and a point $\xx\in\R^d$, let
\begin{equation}
\label{Cdef}
C_{H,\psi}(\xx) = \liminf_{\substack{\rr\in\Q^d \\ \rr\to\xx}} \frac{\|\xx - \rr\|}{\psi\circ H(\rr)}\cdot
\end{equation}
Equivalently, $C_{H,\psi}(\xx)$ is the infimum of all $C\geq 0$ such that
\[
\|\xx - \rr_n\| < C \psi\circ H(\rr_n) \text{ for some sequence $\Q^d\ni \rr_n\to \xx$.}
\]
A function $\psi$ will be called \emph{Dirichlet} on $\R^d$ with respect to the height function $H$ if $C_{H,\psi}(\xx) < \infty$ for all $\xx\in\R^d\butnot\Q^d$, \emph{uniformly Dirichlet} if $\sup_{\R^d\butnot\Q^d} C_{H,\psi} < \infty$, and \emph{optimally Dirichlet} if $\psi$ is Dirichlet and $C_{H,\psi}(\xx) > 0$ for at least one $\xx\in\R^d\butnot\Q^d$. (This terminology originally appeared in \cite{FishmanSimmons5}.) 
\end{definition*}

We observe that Dirichlet's Corollary implies that the function $\psi_{1 + 1/d}$ is uniformly Dirichlet on $\R^d$ with respect to the height function $\Hlcm$, and in fact that
\[
C_{\Hlcm,\psi_{1 + 1/d}}(\xx) \leq 1 \all \xx\in\R^d\butnot\Q^d.
\]
In fact, the function $\psi_{1 + 1/d}$ is optimally Dirichlet on $\R^d$ with respect to the height function $\Hlcm$, due to the existence of so-called \emph{badly approximable vectors}, i.e. vectors $\xx\in\R^d\butnot\Q^d$ for which $C_{\Hlcm,\psi_{1 + 1/d}}(\xx) > 0$. Roughly, the statement that $\psi_{1 + 1/d}$ is optimally Dirichlet should be interpreted as meaning that in formula \eqref{dirichletscorollary}, the function $\psi_{1 + 1/d}$ cannot be improved by more than a multiplicative constant. This interpretation was made rigorous in \cite[Theorem 2.6 and Proposition 2.7]{FishmanSimmons5}.

\begin{example*}
The function $\psi_2(q) = q^{-2}$ is uniformly and optimally Dirichlet on $\R$ with respect to the height function $H_0$. This fact may be equivalently expressed as follows:
\begin{itemize}
\item[(i)] ($\psi_2$ is uniformly Dirichlet) There exists $C > 0$ such that for all $x\in\R$, there exist infinitely many $p/q\in\Q$ such that $|x - p/q| \leq C q^{-2}$.
\item[(ii)] (Optimality) There exist $x\in\R$ and $\epsilon > 0$ such that $|x - p/q| \geq \epsilon q^{-2}$ for all but finitely many $p/q\in\Q$.
\end{itemize}
\end{example*}

%\begin{example*}
%If $\psi$ is any function and if $\xx\in\R^d$ has a rational coordinate $x_i$, then
%\[
%C_{\Hmin,\psi}(\xx) = 0.
%\]
%Indeed, let $\Q^d\ni\rr_n\to \xx$ be a sequence such that for each $n$, the $i$th coordinate of $\rr_n$ is equal to $x_i$. Then $\Hmin(\rr_n) \leq H_0(x_i)$ for all $n$, so the denominator of \eqref{Cdef} is bounded from below, while the numerator tends to zero.
%\end{example*}

\begin{remark*}
We will sometimes deal with functions $\psi$ which are not defined for all natural numbers, but only for sufficiently large numbers. In this case, the formula \eqref{Cdef} may be interpreted as referring to an arbitrary extension of $\psi$ to $\N$; it is clear that the precise nature of the extension does not matter.
\end{remark*}

%\begin{remark*}
%The condition $\rr\to\xx$ is needed in the definition \eqref{Cdef} in order to prevent the following scenario: Let $H = \Hmin$ and let $\psi$ be a function so that $\psi(1) = 1$. If $\xx\in\R^d$ is any point, then there exist infinitely many rational points $\rr_n\in\Q^d$ such that $\|\xx - \rr_n\| \leq 1$ and such that $\rr$ has at least one integer coordinate. Thus if the definition \eqref{Cdef} did not include the restriction $\rr\to\xx$, we would have
%\[
%C_{\Hmin,\psi}(\xx) \leq \liminf_{n\to\infty} \frac{\|\xx - \rr_n\|}{\psi\circ\Hmin(\rr_n)} \leq 1,
%\]
%and thus $\psi$ would be uniformly Dirichlet for $\Hmin$. But that would be absurd.
%\end{remark*}

Given a height function $H \in \{\Hmax,\Hmin,\Hprod\}$ and $d\geq 1$, we may now ask the following questions:

\begin{itemize}
\item[1.] Is there an optimally Dirichlet function on $\R^d$ with respect to $H$?
\item[2.] If so, what is it?\Footnote{Technically, there may be more than one optimally Dirichlet function, as shown in \cite[Remark 2.11]{FishmanSimmons5}. However, in this paper we are really only interested in \emph{Hardy $L$-functions} (which will be defined shortly), and for these functions, there is up to a multiplicative constant at most one optimally Dirichlet function (again see \cite[Remark 2.11]{FishmanSimmons5}).}
\item[3.] If not, can one give a criterion for determining whether or not a given function is Dirichlet?
\end{itemize}

It turns out that to answer these questions, we must consider two cases. The first case is when either $H \in \{\Hmin,\Hprod\}$ or $d \leq 2$. In this case, the situation is similar to the situation for the height function $\Hlcm$: there is a uniformly and optimally Dirichlet function, and it comes from the class of power law functions $(\psi_\alpha)_{\alpha\geq 0}$. Precisely:

\begin{theorem}
\label{theoremminprod}
Fix $\Theta\in\{\tmax,\tmin,\tprod\}$, and if $\Theta = \tmax$ assume that $d \leq 2$. Then the function
\[
\psi_{\omega_d(\Htheta)}(q) = \begin{cases}
q^{-2} & \Theta = \tmax,\tmin\\
q^{-2/d} & \Theta = \tprod
\end{cases}
\]
is uniformly and optimally Dirichlet on $\R^d$ with respect to the height function $\Htheta$.
\end{theorem}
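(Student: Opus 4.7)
The plan separates uniformly Dirichlet from optimally Dirichlet, and for each of the three $\Theta$'s handles the two tasks separately.

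For optimality I would use one construction covering all three cases. Take $\xx=(\alpha_1,\ldots,\alpha_d)\in\R^d$ with each $\alpha_i$ a one-dimensional badly approximable irrational (e.g.~a quadratic irrational). Then there is $c>0$ such that $|\alpha_i-p/q|\geq c/q^2$ for every reduced $p/q\in\Q$, so for any $\rr=(p_1/q_1,\ldots,p_d/q_d)$,
\[
\|\xx-\rr\| = \max_i|\alpha_i - p_i/q_i|\geq c(\min_i q_i)^{-2}.
\]
The elementary bounds $\min_i q_i\leq(\prod_i q_i)^{1/d}\leq\max_i q_i$ yield the chain
\[
(\min_i q_i)^{-2}\geq(\prod_i q_i)^{-2/d}\geq(\max_i q_i)^{-2},
\]
so $\|\xx-\rr\|\geq c\,\Htheta(\rr)^{-\omega_d(\Htheta)}$ in each of the three cases, giving $C_{\Htheta,\psi_{\omega_d(\Htheta)}}(\xx)\geq c>0$.

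Uniformly Dirichlet for $\Hmin$ is straightforward: given $\xx\in\R^d\butnot\Q^d$, fix an irrational coordinate $x_{i_0}$ and let $p_{i_0}^{(n)}/q_{i_0}^{(n)}$ be its sequence of classical convergents. For each other coordinate choose in reduced form $p_i^{(n)}/q_i^{(n)}\in\Q$ with $q_i^{(n)}\geq q_{i_0}^{(n)}$ and $|x_i-p_i^{(n)}/q_i^{(n)}|\leq(q_{i_0}^{(n)})^{-2}$; such rationals exist because every open interval contains rationals of arbitrarily large reduced denominator. Then $\Hmin(\rr_n)=q_{i_0}^{(n)}$ and $\|\xx-\rr_n\|\leq\Hmin(\rr_n)^{-2}$. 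Uniformly Dirichlet for $\Hprod$ in dimension $d=2$ follows by applying 1D Dirichlet to each coordinate with a common threshold $Q$: assuming $q_1\leq q_2\leq Q$, the inequality $q_1 q_2\leq q_1 Q=Q\min_i q_i$ converts the 1D bound $\|\xx-\rr\|\leq(Q\min_i q_i)^{-1}$ into $\|\xx-\rr\|\leq(q_1 q_2)^{-1}=\Hprod(\rr)^{-1}$, uniformly in $\xx\in\R^2\butnot\Q^2$.

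The main technical content lies in uniformly Dirichlet for $\Hprod$ with $d\geq 3$ and for $\Hmax$ with $d=2$. The naive 1D Dirichlet argument fails here because the denominators it supplies can have wildly different magnitudes, in which case $(\prod_i q_i)^{-2/d}$ (resp.~$(\max_i q_i)^{-2}$) is much smaller than the 1D bound $(Q\min_i q_i)^{-1}$. To handle this I would apply Minkowski's convex body theorem---and if necessary Minkowski's second theorem on successive minima---to the $2d$-dimensional lattice
\[
\Lambda_\xx=\{(q_1 x_1-p_1,\ldots,q_d x_d-p_d, q_1,\ldots,q_d):p_i,q_i\in\Z\}\subset\R^{2d},
\]
with a weighted convex body chosen so that every sufficiently short lattice vector has all its denominator coordinates of comparable size $\asymp Q$. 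An alternative attack is induction on $d$: whenever some $q_i$ emerges abnormally small, drop the offending coordinate and invoke the $(d-1)$-dimensional statement, whose exponent $2/(d-1)>2/d$ provides exactly the slack needed to absorb the imbalance. Producing a balanced-denominator approximation uniformly in $\xx$ is the main obstacle.
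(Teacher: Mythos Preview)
Your optimality argument is correct and matches the paper's; your direct constructions for $\Hmin$ (all $d$) and for $\Hprod$ with $d=2$ are also correct, and in fact more elementary than the paper's uniform treatment of those cases. The gap is in the remaining cases, $\Hprod$ with $d\geq 3$ and $\Hmax$ with $d=2$, which carry the real content of the theorem. Neither proposed attack works as stated. The lattice $\Lambda_\xx$ has many short vectors with some $q_i=p_i=0$, and no symmetric convex body forces the nonzero point Minkowski supplies to have all $q_i$ nonzero, let alone balanced; passing to successive minima does not help, since the small minima are absorbed by the $2(d-1)$-dimensional sublattices $\{q_i=p_i=0\}$. The induction is equally stuck: combining a $(d-1)$-dimensional approximation of product-height $P$ with a one-dimensional approximation of $x_1$ requires $q_1\asymp P^{1/(d-1)}$ --- the upper bound so that the $(d-1)$-exponent $2/(d-1)$ can be absorbed into $2/d$, the lower bound so that $|x_1-p_1/q_1|\lesssim (q_1P)^{-2/d}$ --- and one-dimensional Dirichlet cannot pin $q_1$ to a prescribed scale when $x_1$ happens to have a large partial quotient there.

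The paper's route is entirely different. Via a continued-fraction encoding (Lemma~\ref{lemmacondition}) it replaces $\xx$ by a \emph{data progression} $(A_k,i_k)$ which lists, in increasing order, the log-denominators of convergents across all $d$ coordinates together with the coordinate each came from; uniform Dirichletness then reduces to showing that no such progression can satisfy $b_k^{(i_k)}+b_{k+1}^{(i_k)}\leq\beta_d\,\Xi(\Delta_k)-1$ for all large $k$. The key step (Claim~\ref{claimvardec}) is that this inequality forces the \emph{variance} of the $d$-tuple $\Delta_k=(b_k^{(1)},\ldots,b_k^{(d)})$ of current log-denominators to be nonincreasing in $k$, and to drop by a definite amount every time $\max(\Delta_k)$ strictly increases. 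Since the maximum increases infinitely often, the variance would tend to $-\infty$, a contradiction. This variance-monotonicity argument is precisely what substitutes for the ``balanced denominators'' you were unable to produce directly, and it is the idea missing from your proposal.
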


\begin{remark*}
The case $\Theta = \tprod$ of Theorem \ref{theoremminprod} can be reformulated as a theorem about \emph{intrinsic} Diophantine approximation (see e.g. \cite{FKMS2}) using the \emph{standard} height function $\Hlcm$ on the variety $M_d = \Phi_d(\R^d) \subset \R^{2^d - 1}$, where
\[
\Phi_d(x_1,\ldots,x_d) = \left({\prod}_{i\in S} x_i\right)_{\smallemptyset \neq S \subset \{1,\ldots,d\}}
\]
is (the affinization of) the Segre embedding. This is because for every rational $\rr\in\Q^d$, we have $\Hprod(\rr) = \Hlcm\circ\Phi_d(\rr)$. In the terminology of \cite{FKMS2}, the reformulated theorem states that the function $\psi(q) = q^{-2/d}$ is an optimal Dirichlet function for the Diophantine triple $(M_d,\Q^{2^d - 1}\cap M_d,\Hlcm)$. (It is uniformly Dirichlet on compact subsets of this triple.) The special case $d = 2$ follows from \cite[Theorems 4.5 and 5.1]{FKMS2} using the fact that $M_2$ is a quadric hypersurface; cf. \cite[Remark 8.1]{FKMS2}.
\end{remark*}

In the second case, namely when $H = \Hmax$ and $d \geq 3$, the situation is much different. Specifically, when $d\geq 3$ the height function $\Hmax$ has the following unexpected property: It possesses no ``reasonable'' optimally Dirichlet function. To state this precisely, we need to define the class of functions that we consider to be reasonable. A \emph{Hardy $L$-function} is a function which can be expressed using only the elementary arithmetic operations $+,-,\times,\div$, exponents, logarithms, and real-valued constants, and which is well-defined on some interval of the form $(t_0,\infty)$.\Footnote{Hardy $L$-functions were defined by G. H. Hardy and were originally called \emph{logarithmico-exponential functions}; see \cite[\63]{Hardy2}.} For example, for any $C,\alpha\geq 0$ the function
\[
\psi(q) = q^{-\alpha + C/\log^2\log(q)}
\]
is is a Hardy $L$-function. We have the following:
\begin{theorem}
\label{theoremnooptimaldirichlet}
Suppose $d\geq 3$. Then no Hardy $L$-function is optimally Dirichlet on $\R^d$ with respect to the height function $\Hmax$.
\end{theorem}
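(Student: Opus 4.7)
The plan is to split the argument by the asymptotic exponent $\alpha := -\lim_{q\to\infty} \log\psi(q)/\log q$ of the Hardy $L$-function $\psi$ (which is always a well-defined element of $\R\cup\{\pm\infty\}$, as Hardy $L$-functions have tame logarithmic-exponential asymptotics), and compare it to $\alpha_d := \omega_d(\Hmax) = d/(d-1)^{(d-1)/d}$. In each case I would exhibit a failure of either the Dirichlet or the optimality property.

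If $\alpha > \alpha_d$, then since $\alpha_d$ is the infimum from Theorem~\ref{theoremexponents}, some $\xx$ has $\omega_{\Hmax}(\xx) < \alpha$. Picking $\beta\in(\omega_{\Hmax}(\xx),\alpha)$, almost every $\rr$ near $\xx$ satisfies $\|\xx-\rr\| > \Hmax(\rr)^{-\beta}$; choosing $\eta < \alpha-\beta$, we also have $\psi(q) \leq q^{-\alpha+\eta} < q^{-\beta}$ for large $q$, so $\|\xx-\rr\|/\psi(\Hmax(\rr)) \geq \Hmax(\rr)^{\alpha-\eta-\beta} \to \infty$, giving $C_{\Hmax,\psi}(\xx)=\infty$ and $\psi$ is not Dirichlet. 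Dually, if $\alpha < \alpha_d$, then every $\xx$ has $\omega_{\Hmax}(\xx) \geq \alpha_d > \alpha$, producing a sequence $\rr_n\to\xx$ with $\|\xx-\rr_n\| \leq \Hmax(\rr_n)^{-\alpha-\varepsilon}$ for some $\varepsilon>0$; against $\psi(q)\geq q^{-\alpha-\varepsilon/2}$ the ratio tends to zero, so $C_{\Hmax,\psi}(\xx)=0$ for every $\xx$ and optimality fails.

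The critical case is $\alpha = \alpha_d$. Writing $\psi(q)=q^{-\alpha_d}L(q)$ with $L$ a Hardy $L$-function of zero asymptotic exponent, the goal is to show $C_{\Hmax,\psi}(\xx)=0$ for every irrational $\xx$, once more contradicting optimality. The key input here---presumably the reason the authors develop the class $\RR$ of recursively integrable functions---is a sharp dichotomy for Hardy $L$-functions against $\Hmax$: being Dirichlet on all of $\R^d$ should correspond to a divergence condition on an integral built from $L$ (arising from a Khintchine-type count over the multi-scale denominators entering $\Hmax$), which forces $L$ to grow at least at some explicit rate; while having $C_{\Hmax,\psi}(\xx)>0$ for some $\xx$ should require $L$ to sit below a strictly smaller threshold, tied to the best achievable constant in the lower-bound construction behind Theorem~\ref{theoremexponents}. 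For $d\geq 3$ these two thresholds should be genuinely separated (they coincide when $d\leq 2$, which is why Theorem~\ref{theoremminprod} produces a power-law optimally Dirichlet function there), so no single Hardy $L$-function can straddle both.

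The main obstacle is exactly this quantitative separation, and it is the step that invokes the $\RR$-machinery. The role of $d\geq 3$ is that $\alpha_d < 2$ strictly and the weighted-Dirichlet optimization underlying the exponent is not attained at any finite configuration of scales, producing a residual sub-polynomial slack that single Hardy $L$-functions---whose only admissible corrections are iterated logarithms---cannot capture uniformly in $\xx$. Pinning down the exact integrability criteria characterizing the Dirichlet and optimality sides of the dichotomy, and showing that for $d\geq 3$ no Hardy $L$-function $L$ satisfies both, is where the bulk of the real work lies; the three-case splitting above then ties these pieces together to complete the proof.
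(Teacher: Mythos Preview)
Your splitting by the asymptotic exponent $\alpha$ and the treatment of $\alpha\neq\alpha_d$ are correct, though note that in the paper the formula $\omega_d(\Hmax)=\alpha_d$ is itself a corollary of Proposition~\ref{propositionHmax}, which already yields Theorem~\ref{theoremnooptimaldirichlet} directly; so the case split, while valid, does not shortcut any work.

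The genuine gap is the critical case $\alpha=\alpha_d$: you do not carry it out, and the mechanism you conjecture is not the one that holds. You posit two \emph{strictly separated} thresholds---Dirichlet forcing the slowly varying factor $L$ above one rate, existence of $\xx$ with $C_{\Hmax,\psi}(\xx)>0$ forcing $L$ below a smaller one---with the gap excluding both. What the paper proves (Proposition~\ref{propositionHmax}) is structurally different and stronger: for $\psi$ a Hardy $L$-function and $d\geq 3$, the conditions (A)~$\psi$ Dirichlet, (B)~$\psi$ uniformly Dirichlet, (C)~$C_{\Hmax,\psi}\equiv 0$, and (D)~$f_\psi\notin\RR$ where $f_\psi(x)=\tfrac{2}{d\gamma_d}\bigl[\alpha_d+\gamma_d^{-x}\log\psi(e^{\gamma_d^x})\bigr]$, are all \emph{equivalent}. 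There is a single threshold, the sets ``Dirichlet'' and ``$C_{\Hmax,\psi}(\xx)>0$ for some $\xx$'' are exactly complementary, and hence no optimal Dirichlet function exists. The hard implications (D)$\Rightarrow$(B) and (C)$\Rightarrow$(D) go through Lemma~\ref{lemmacondition} and a structural analysis (Claims~\ref{claimtkgammad} and~\ref{claimrotating}) showing that extremal data progressions must be eventually periodic and asymptotically geometric with ratio $\gamma_d=(d-1)^{1/d}$; this is where $d\geq 3$ enters, since $\gamma_d>1$ only then. No Khintchine-type count is involved. The loop (A)$\Rightarrow$(D)$\Rightarrow$(C) is then closed by the ignorability Lemma~\ref{lemmaignorable}: perturbing $\psi$ by $\phi(q)=q^{1/\log^3\log q}$ leaves membership of $f_\psi$ in $\RR$ unchanged but multiplies $C_{\Hmax,\psi}$ by a factor tending to $0$ or $\infty$, which converts (A) for $\psi$ into (C) for $\phi\psi$ and (B) for $\psi/\phi$ into (C) for $\psi$.
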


\begin{remark*}
The class of Hardy $L$-functions includes almost all functions that one naturally encounters in dealing with ``analysis at infinity'', except for those with oscillatory behavior.
\end{remark*}

This answers question 1 above, so we would like next to answer question 3. Namely, given $d\geq 3$ and a Hardy $L$-function $\psi$, how does one determine whether or not $\psi$ is Dirichlet on $\R^d$ with respect to $\Hmax$? Our final theorem (Theorem \ref{theoremHmax}) will be a complete answer to this question. However, since it is complicated to state, we approach this theorem by degrees. As a first approximation we give the following corollary, which considers the case of a single error term added to the function $\psi_{\omega_d(\Hmax)}$:

\begin{corollary*}[of Theorem \ref{theoremHmax}]
\label{corollaryerrorterm}
Suppose $d\geq 3$. For each $C > 0$ let
\begin{equation}
\label{psiC}
\psi(q) = q^{-\omega_d(\Hmax) + C/\log^2\log(q)}.
\end{equation}
Then $\psi$ is (non-optimally) Dirichlet on $\R^d$ with respect to $\Hmax$ if and only if
\begin{equation}
\label{C>}
C > \frac{d\gamma_d \log^2(\gamma_d)}{8},
\end{equation}
where $\gamma_d = (d - 1)^{1/d} > 1$.
\end{corollary*}
%\begin{proof}
%Plugging \eqref{psiC} into \eqref{dirichletseries} gives
%\[
%\sum_{k = 1}^\infty \rec \frac{2}{d\gamma_d} \frac{1}{\log^2(\gamma_d)} \frac{C}{k^2}.
%\]
%By Lemma \ref{lemmarecint1}, this series diverges if and only if \eqref{C>} holds.
%\end{proof}

%By applying Remark \ref{remarkrecint} we can get an infinite sequence of similar corollaries: for example, if
%\[
%\psi(q) = q\wedge\left(-\omega_d(\Hmax) + \frac{d\gamma_d}{8} \frac{\log^2(\gamma_d)}{\log^2\log(q)}\left[1 + \frac{C}{\log^2 \log\log(q)}\right]\right),
%\]
%then $\psi$ is (uniformly) Dirichlet on $\R^d$ with respect to $\Hmax$ if and only if $C > 1$.

%Let $\psi:\N\to(0,\infty)$ be (doubling and decreasing? are more hypotheses needed?\internal), and consider the series
%\begin{equation}
%\label{dirichletseries}
%\sum_{k = 1}^\infty \left[\omega_d(\Hmax) + \frac{\log\psi(e^{e^k})}{e^k}\right].
%\end{equation}
%If \eqref{dirichletseries} diverges, then $\psi$ is uniformly Dirichlet.

In particular, letting $C = 0$, we see that the function $\psi_{\omega_d(\Hmax)}$ is not Dirichlet on $\R^d$ with respect to $\Hmax$.% Again, this stands in contrast to the situation for the height functions $\Hlcm$, $\Hmin$, and $\Hprod$.

This corollary now provides us with motivation to state our final theorem. Let $\psi$ be the function defined by \eqref{psiC} when $C = d\gamma_d\log^2(\gamma_d)/8$. We know that $\psi$ is not Dirichlet (on $\R^d$ with respect to $\Hmax$), but that for any function of the form $\phi_\epsilon(q) = q^{\epsilon/\log^2\log(q)}$, the product $\phi_\epsilon\psi$ is Dirichlet. This suggests that there is a function $\phi$ which grows more slowly than any $\phi_\epsilon$ such that the product $\phi\psi$ is still Dirichlet. What function can we multiply by? As it turns out, if
\[
\phi(q) = q^{C/[\log^2\log(q)\log^2\log\log(q)]},
\]
then $\phi\psi$ is Dirichlet if and only if \eqref{C>} holds. At this point it is clear that this line of questioning can be pursued \emph{ad infinitum}, leading to the following:

\begin{theorem}
\label{theoremHmax}
Suppose that $d\geq 3$. Then for each $N\geq 1$ and $C \geq 0$, the function
\[
\psi_{N,C}(q) = q\wedge\left(-\omega_d(\Hmax) + \frac{d\gamma_d\log^2(\gamma_d)}{8}
\left[\sum_{n = 2}^N \prod_{i = 2}^n \left(\frac{1}{\log^{(i)}(q)}\right)^2 + C\prod_{i = 2}^{N + 1}\left(\frac{1}{\log^{(i)}(q)}\right)^2 \right] \right)
\]
is (non-optimally) Dirichlet on $\R^d$ with respect to $\Hmax$ if and only if $C > 1$. Here $\gamma_d = (d - 1)^{1/d}$ as before, and $\log^{(i)}$ denotes the $i$th iterate of the logarithm function. If $N = 1$, then the first summation is equal to $0$ by convention.
\end{theorem}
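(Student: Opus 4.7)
The plan is to characterize Dirichletness of $\psi_{N,C}$ with respect to $\Hmax$ through the machinery of recursively integrable functions $\RR$ developed earlier in the paper, and to identify the threshold $C = 1$ with that of a classical Bertrand nested-logarithm integral. I would carry this out in three stages.

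First, I would establish a general criterion: for any Hardy $L$-function $\psi$ whose exponent is a small perturbation of $-\omega_d(\Hmax)$, $\psi$ is Dirichlet on $\R^d$ with respect to $\Hmax$ iff a certain slowly varying function $f_\psi$ built from $\psi$ belongs to $\RR$. This should follow from a ubiquity/counting analysis adapted to $\Hmax$. Recall that $\omega_d(\Hmax) = d\gamma_d/(d-1)$ is achieved by $\Hmax$-approximations with denominator vectors $(q_1,\ldots,q_d)$ scaled geometrically by the ratio $\gamma_d = (d-1)^{1/d}$; a second-order Taylor expansion of the resulting optimization, around the critical geometric ratio, produces the coefficient $d\gamma_d\log^2(\gamma_d)/8$ as the leading quadratic correction, which is exactly what enters $f_\psi$. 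This stage is the main obstacle: it requires tracking second-order effects in the approximation geometry precisely enough to pin down this constant, and it is where the recursive — rather than merely integrable — structure of $\RR$ matters, since each layer of the ubiquity argument consumes one logarithmic scale, mirroring the recursive definition of $\RR$.

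Second, I would substitute $\psi = \psi_{N,C}$ into this criterion. The nested exponent of $\psi_{N,C}$ is engineered so that after cancelling $-\omega_d(\Hmax)$ and dividing out $d\gamma_d\log^2(\gamma_d)/8$, the telescoping sum $\sum_{n=2}^N\prod_{i=2}^n (\log^{(i)} q)^{-2}$ yields the chain of logarithmic factors $\log u, \log^{(2)} u,\ldots,\log^{(N-1)} u$ in the denominator of the integrand, while the trailing $C\prod_{i=2}^{N+1}(\log^{(i)} q)^{-2}$ supplies the critical power $(\log^{(N)} u)^{C}$; this happens after the sequence of substitutions $u = \log q$, then $\log u$, etc. Membership of $f_{\psi_{N,C}}$ in $\RR$ then reduces to convergence of a Bertrand integral of the form
\[
\int^\infty \frac{du}{u\, \log u\, \log^{(2)} u \cdots \log^{(N-1)} u\, (\log^{(N)} u)^C},
\]
which by the classical test converges iff $C > 1$.

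Finally, I would handle both directions of the equivalence. For $C > 1$, convergence gives $f_{\psi_{N,C}} \in \RR$, and the criterion from the first stage yields $C_{\Hmax,\psi_{N,C}}(\xx) < \infty$ for every $\xx \in \R^d\butnot\Q^d$. For $C \leq 1$, divergence places $f_{\psi_{N,C}}$ outside $\RR$; applied contrapositively this produces some $\xx$ with $C_{\Hmax,\psi_{N,C}}(\xx) = \infty$. If the contrapositive direction is not available cleanly from the criterion, I would instead construct such an $\xx$ by a tailored iteration that at each logarithmic scale selects a coordinate whose one-dimensional denominator is badly approximable at that scale, in analogy with the classical construction of badly approximable vectors in the $\Hlcm$ setting.
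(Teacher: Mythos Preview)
Your outline arrives at the correct threshold $C>1$, but it does so by two errors that happen to cancel, and the proposed method for the first stage is not the one that actually works here.

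\textbf{The direction of the $\RR$ criterion is reversed.} In the paper's Proposition~\ref{propositionHmax}, $\psi$ is Dirichlet on $\R^d$ with respect to $\Hmax$ if and only if $f_\psi\notin\RR$, not $f_\psi\in\RR$. Intuitively: if $f_\psi$ is recursively integrable, then one can build a data progression (hence a point $\xx$) on which $C_{\Hmax,\psi}(\xx)>0$, witnessing failure of Dirichletness in the strong sense of (C). You have this backwards.

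\textbf{Recursive integrability is not a Bertrand test.} You reduce ``$f_{\psi_{N,C}}\in\RR$'' to convergence of $\int^\infty du/(u\log u\cdots(\log^{(N)}u)^C)$, an ordinary integrability condition with threshold exponent $1$. But $\RR$ is a strictly smaller class than the integrable functions and is governed by a completely different threshold: $C/x^2\in\RR$ iff $C\le 1/4$ (Lemma~\ref{lemmarecint1}), and the iterated substitution of Lemma~\ref{lemmarecint2} propagates this to the squared-logarithm functions $f_{N,C}$ of Corollary~\ref{corollaryrecint}, again with threshold $1/4$. The actual computation is $f_{\psi_{N,C}}(x)=\log^2(\gamma_d)\,f_{N-2,C/4}(x\log\gamma_d)$, so $f_{\psi_{N,C}}\in\RR$ iff $C/4\le 1/4$ iff $C\le 1$. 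Combined with the correct direction above, this gives Dirichlet iff $C>1$. Your Bertrand integral, with its single (not squared) logarithms, is neither the function $f_{\psi_{N,C}}$ nor a correct test for membership in $\RR$.

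\textbf{The first-stage mechanism is not ubiquity.} Ubiquity is a tool for almost-everywhere statements; Dirichletness is an everywhere statement. The paper instead encodes each $\xx\in\R^d\setminus\Q^d$ by a \emph{data progression} built from the continued-fraction convergents of its coordinates (Lemma~\ref{lemmacondition}), and then shows (Claims~\ref{claimincreasing}--\ref{claimrotating}) that any data progression violating uniform Dirichletness must be eventually periodic and asymptotically geometric with ratio $\gamma_d$. The second-order deviation of the ratio from $\gamma_d$ then satisfies a discrete Riccati-type recursion whose continuum limit is exactly \eqref{recint}; this is what makes the \emph{recursive} integral, rather than an ordinary one, the right object. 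Your vague ``each layer of the ubiquity argument consumes one logarithmic scale'' does not capture this: the nesting of logarithms enters through the iterated substitution of Lemma~\ref{lemmarecint2}, not through a scale-by-scale covering argument.
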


The earlier corollary is precisely the special case $N = 1$ of Theorem \ref{theoremHmax}.

\begin{remark*}
It may not be entirely obvious that Theorem \ref{theoremHmax} is a complete answer to question 3 in the case of Hardy $L$-functions. Nevertheless, it is. Precisely: If $\psi$ is a Hardy $L$-function, then there exist $N\geq 1$ and $C\geq 0$ such that comparing $\psi$ with $\psi_{N,C}$ together with Theorem \ref{theoremHmax} allow one to determine whether or not $\psi$ is Dirichlet on $\R^d$ with respect to $\Hmax$. For a proof of this, see Proposition \ref{propositionLfunctions}.

For a version of Theorem \ref{theoremHmax} which goes slightly beyond Hardy $L$-functions, allowing $\psi$ to be a member of any Hardy field which contains the exponential and logarithm functions and is closed under composition, see Proposition \ref{propositionHmax}.
\end{remark*}

\subsection{Techniques}

The main technique of this paper is to generalize the correspondence between the continued fraction expansion of an irrational number and its Diophantine properties into higher dimensions. This is done by introducing the notion of a \emph{data progression} corresponding to an irrational vector $\xx$, which is a mathematical object that encodes information about the continued fraction expansions of all of the coordinates of $\xx$. The Diophantine properties of $\xx$ can then be related to properties of the corresponding data progression. For more details see \6\ref{subsectiondata}.

In the case of the height function $\Hmax$, this correspondence translates the question of which functions are Dirichlet into a question about whether data progressions satisfying certain inequalities exist. We answer this question by converting it into a question about whether certain differential equations have nonnegative solutions, leading to the concept of a \emph{recursively integrable function}. This concept is interesting in its own right and we study it in detail in Section \ref{sectionrecint}. In particular we give a complete characterization of which Hardy $L$-functions are recursively integrable (Proposition \ref{propositionLfunctions}), which leads to the characterization of which functions are Dirichlet described above in Theorem \ref{theoremHmax}.

\subsection{Summary of the paper}
Section \ref{sectionpreliminaries} contains preliminary results which are used in the proofs of our main theorems. In Section \ref{sectionminprod} we prove Theorem \ref{theoremminprod}, as well as demonstrating formulas \eqref{exponents2} and \eqref{exponents3}. Section \ref{sectioninterlude} provides a motivation for the first formula of Theorem \ref{theoremexponents} without giving a rigorous proof. Section \ref{sectionrecint} is devoted to defining and analyzing the class of recursively integrable functions, a class which is used in the proof of Theorems \ref{theoremnooptimaldirichlet} and \ref{theoremHmax}. In Section \ref{sectionHmax} we prove Theorems \ref{theoremnooptimaldirichlet} and \ref{theoremHmax}, as well as demonstrating formula \eqref{exponents1}. Finally, a list of open questions is given in Section \ref{sectionopenquestions}.

\draftnewpage
\section{Preliminaries}
\label{sectionpreliminaries}

\subsection{Lemmas concerning continued fractions}
We begin our preliminaries with two lemmas concerning continued fractions. The first states that for $x\in\R$, the convergents of the continued fraction expansion of $x$ provide the best approximations to $x$ as long as one is willing to accept a multiplicative error term.\Footnote{Here ``best approximations'' means ``best approximations of the first kind'' in the language of \cite[p.24]{Khinchin_book}. Note that if no error term is allowed, then best approximations of the first kind must be \emph{intermediate fractions} (cf. \eqref{intermediate}), but they are not necessarily convergents.} Hence the Diophantine properties of $x$ essentially depend only on the denominators of these convergents. The second states that given any sequence of numbers increasing fast enough, there is a number $x$ such that the denominators of the convergents of the continued fraction expansion of $x$ are equal up to an asymptotic to the elements of this sequence. Together, the two lemmas say that from a (sufficiently coarse) Diophantine point of view, the properties of a number can be encoded by an increasing sequence of integers.

\begin{remark*}
This subsection is mostly interesting if $x$ is an irrational number. However, since the implied constants are supposed to be independent of $x$, the results are nontrivial even when $x$ is rational.
\end{remark*}

\begin{lemma}
\label{lemmaconvergents}
Fix $x\in\R$, and let $(p_n/q_n)_0^N$ be the convergents of the continued fraction expansion of $x$ (so that $N = \infty$ if and only if $x\notin\Q$). Then for every $p/q\in\Q$, there exists $n\in\N$ so that
\[
q\gtrsim q_n \text{ and } \left|x - \frac pq\right| \gtrsim \left|x - \frac{p_n}{q_n}\right|
\]
(cf. Convention 3).
\end{lemma}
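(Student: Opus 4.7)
The plan is to take $n$ so that $q_n \leq q < q_{n+1}$ (with $q_{N+1} := \infty$ if $x \in \Q$), and then split into two subcases based on which half of the gap $[q_n, q_{n+1})$ the denominator $q$ falls into. The trivial cases where $p/q = p_m/q_m$ for some $m$, or where $x \in \Q$ with $n = N$ (so that $p_N/q_N = x$), both give the conclusion immediately with a suitable choice of index; so I assume henceforth that $p/q$ is not a convergent and that $q_{n+1}$ exists.

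\textbf{Subcase 1:} $q \leq q_{n+1}/2$. I use the chosen $n$. The inequality $q \geq q_n$ is immediate. For the approximation bound, since $p/q \neq p_n/q_n$ the integer $pq_n - p_n q$ is nonzero, so $|p/q - p_n/q_n| \geq 1/(q q_n)$; combined with the standard convergent estimate $|x - p_n/q_n| \leq 1/(q_n q_{n+1})$, the triangle inequality gives
\[
\left| x - \frac{p}{q}\right| \geq \frac{1}{qq_n} - \frac{1}{q_n q_{n+1}} = \frac{q_{n+1} - q}{q q_n q_{n+1}} \geq \frac{1}{q_n q_{n+1}} \geq \left| x - \frac{p_n}{q_n}\right|,
\]
where the middle inequality uses $q_{n+1} - q \geq q$, equivalent to $q \leq q_{n+1}/2$.

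\textbf{Subcase 2:} $q > q_{n+1}/2$. I use $n+1$ in place of $n$. The bound $q \geq q_{n+1}/2$ gives $q \gtrsim q_{n+1}$. Since $q < q_{n+1}$ and convergents are in lowest terms, $p/q \neq p_{n+1}/q_{n+1}$, so the best-approximation property of convergents of the second kind (i.e.\ $|q'x - p'| \geq |q_{n+1} x - p_{n+1}|$ for $q' < q_{n+2}$ and $p'/q' \neq p_{n+1}/q_{n+1}$) gives $|qx - p| \geq |q_{n+1} x - p_{n+1}|$; dividing by $q$ and using $q_{n+1}/q > 1$ yields $|x - p/q| > |x - p_{n+1}/q_{n+1}|$.

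The main obstacle is recognizing that the naive choice $n$ with $q_n \leq q < q_{n+1}$ is inadequate when $q$ lies deep inside a long gap produced by a large partial quotient $a_{n+1}$: in that regime the direct bound $|x - p/q| \geq (q_n/q)|x - p_n/q_n|$ from best approximation degenerates as $q/q_n$ grows, so one cannot simply take the largest $n$ with $q_n \leq q$. The dichotomy above circumvents this by using an integer-separation-plus-triangle-inequality argument when $q$ is in the lower half of the gap (where best approximation is weakest), and by jumping up to the next convergent $n+1$ when $q$ is in the upper half.
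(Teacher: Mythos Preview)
Your proof is correct. Both your argument and the paper's locate $q$ in a gap between consecutive convergent denominators and split into two subcases depending on whether $q$ sits in the lower or upper half of that gap, but the technical implementations differ. The paper first replaces $p/q$ by a best approximation of the first kind and invokes Khinchin's structure theorem that such approximants are intermediate fractions $p/q = (ap_{n-1}+p_{n-2})/(aq_{n-1}+q_{n-2})$; it then splits on $a \gtrless a_n/2$ and, in the lower-half case, computes $|p_n/q_n - p/q|$ explicitly from this representation. You bypass all of that: you work with an arbitrary $p/q$, split directly on $q \gtrless q_{n+1}/2$, and in the lower-half case use only the crude integer-separation bound $|p/q - p_n/q_n| \ge 1/(qq_n)$ together with the triangle inequality. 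Your route is more elementary---it needs neither the classification of first-kind best approximations nor any arithmetic of intermediate fractions---while the paper's route makes the connection to the mediant structure of the Stern--Brocot tree more visible.
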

Before we begin the proof, we recall \cite[Theorem 1]{Khinchin_book} that if $(a_n)_0^N$ are the partial quotients of the continued fraction expansion of $x$, then
\begin{align} \label{recursion1}
p_n &= a_n p_{n - 1} + p_{n - 2}\\ \label{recursion2}
q_n &= a_n q_{n - 1} + q_{n - 2}
\end{align}
for all $n\geq 1$. Here we use the convention that $p_{-1} = 1$ and $q_{-1} = 0$. In particular, the sequence $(q_n)_0^N$ is strictly increasing and satisfies $q_n \asymp a_n q_{n - 1}$. We recall also \cite[Theorems 9 and 13]{Khinchin_book} that for all $0\leq n < N$,
\begin{equation}
\label{errorapprox}
\left|x - \frac{p_n}{q_n}\right| \asymp \frac{1}{q_n q_{n + 1}}\cdot
\end{equation}

%\begin{proof}[Proof of \text{(i)}]
%This follows directly from the following inequalities (see e.g. \cite[Theorems 9 and 13]{Khinchin_book}):
%\[
%\frac{1}{q_n(q_n + q_{n + 1})} < \left|x - \frac{p_n}{q_n}\right| < \frac{1}{q_n q_{n + 1}}
%\]
%\end{proof}
\begin{proof}
Consider the set $S = \{p'/q'\in\Q : q' \leq q\}$, and let $p'/q'\in S$ be chosen to minimize $|x - p'/q'|$. Then $q'\leq q$ and $|x - p'/q'| \leq |x - p/q|$, so we may without loss of generality assume that $p/q = p'/q'$. In this case, $p/q$ is a \emph{best approximation of the first kind} in the sense of \cite[p.24]{Khinchin_book}. By \cite[Theorem 15]{Khinchin_book}, $p/q$ is an \emph{intermediate fraction} in the sense of \cite[p.14]{Khinchin_book}, i.e.
\begin{equation}
\label{intermediate}
\frac{p}{q} = \frac{a p_{n - 1} + p_{n - 2}}{a q_{n - 1} + q_{n - 2}}
\end{equation}
for some $1\leq n\leq N$ and $1\leq a\leq a_n$. We consider two cases separately:
\begin{itemize}
\item Case 1: $a\geq a_n/2$. In this case,
\[
2q \geq a_n q_{n - 1} + q_{n - 2} = q_n.
\]
On the other hand, by \cite[Theorem 17]{Khinchin_book}, $p_n/q_n$ is a best approximation of the second kind, and thus also a best approximation of the first kind. Since $q\leq q_n$, this gives
\[
\left|x - \frac{p}{q}\right| > \left|x - \frac{p_n}{q_n}\right|,
\]
completing the proof in this case.
\item Case 2: $1\leq a < a_n/2$. In this case, since $p/q$ lies on the same side of $x$ as $p_n/q_n$ (cf. \cite[Theorem 4]{Khinchin_book} and \cite[Lemma on p.14]{Khinchin_book}), we have
\begin{align*}
\left|x - \frac{p}{q}\right| &\geq \left|\frac{p_n}{q_n} - \frac pq\right|\\
&= \left| \frac{a_n p_{n - 1} + p_{n - 2}}{a_n q_{n - 1} + q_{n - 2}} - \frac{a p_{n - 1} + p_{n - 2}}{a q_{n - 1} + q_{n - 2}}\right|\\
&= \frac{a_n - a}{[a_n q_{n - 1} + q_{n - 2}][a q_{n - 1} + q_{n - 2}]} & \text{(cf. \cite[Theorem 2]{Khinchin_book})}\\
&\geq \frac{a_n/2}{q_n^2}
\asymp \frac{1}{q_{n - 1} q_n} \asymp \left|x - \frac{p_{n - 1}}{q_{n - 1}}\right|.
\end{align*}
Since $q\geq q_{n - 1}$, this completes the proof in this case.
\qedhere\end{itemize}
\end{proof}

\begin{lemma}
\label{lemmaabstractconvergents}
Let $(\w q_n)_0^N$ be a (finite or infinite) sequence satisfying $\w q_{n + 1}\geq 2 \w q_n$ and $\w q_0 = 1$. Then there exists $x\in\R$ so that if $(p_n/q_n)_0^N$ are the convergents of the continued fraction expansion of $x$, then
\begin{equation}
\label{qNbounds}
\frac{1}{2}\w q_n \leq q_n \leq \w q_n \all n\in\N.
\end{equation}
\end{lemma}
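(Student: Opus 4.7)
The plan is to build $x$ from the bottom up by prescribing its continued fraction expansion, so that the convergent denominators $q_n$ land inside the target windows $[\widetilde q_n/2, \widetilde q_n]$. We set $a_0 := 0$ and, inductively for $n \geq 1$, we choose
\[
a_n := \left\lfloor \frac{\widetilde q_n - q_{n - 2}}{q_{n - 1}}\right\rfloor,
\]
where $q_{n - 1}, q_{n - 2}$ are the denominators already produced by the recursion \eqref{recursion2} (with the usual convention $q_{-1} = 0$, $q_0 = 1$). We then let $x := [a_0;a_1,a_2,\ldots]$ (or $x := p_N/q_N$ in the finite case), and verify by induction on $n$ that (i) $a_n \geq 1$, and (ii) the bounds \eqref{qNbounds} hold.

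The base case $n = 0$ is simply $q_0 = 1 = \widetilde q_0$. For the inductive step, assume (i) and (ii) hold up to index $n - 1$. The doubling hypothesis then gives $\widetilde q_n \geq 2\widetilde q_{n - 1} \geq \widetilde q_{n - 1} + \widetilde q_{n - 2}$, so
\[
\frac{\widetilde q_n - q_{n - 2}}{q_{n - 1}} \geq \frac{\widetilde q_n - \widetilde q_{n - 2}}{\widetilde q_{n - 1}} \geq 1,
\]
which yields $a_n \geq 1$, confirming (i). The upper bound in (ii) is immediate from $a_n \leq (\widetilde q_n - q_{n - 2})/q_{n - 1}$, which plugged into the recursion gives $q_n \leq \widetilde q_n$. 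For the lower bound, $a_n > (\widetilde q_n - q_{n - 2})/q_{n - 1} - 1$ yields
\[
q_n > \widetilde q_n - q_{n - 1} \geq \widetilde q_n - \widetilde q_{n - 1} \geq \widetilde q_n/2,
\]
using the doubling hypothesis once more.

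There is no real obstacle: the doubling condition is exactly what powers both halves of the induction. It ensures, on one hand, that the arithmetic progression $\{aq_{n - 1} + q_{n - 2} : a\in\N\}$ (of step $q_{n - 1} \leq \widetilde q_{n - 1} \leq \widetilde q_n/2$) is fine enough to have a point in the interval $[\widetilde q_n/2, \widetilde q_n]$ of length $\widetilde q_n/2$, and, on the other hand, that the leftmost such point (corresponding to $a = 1$) is not already past $\widetilde q_n$. If anything demands attention, it is simply keeping track of the indices $n - 2, n - 1, n$ and the boundary case $n = 1$ where $q_{-1} = 0$; the formula for $a_1$ then reduces to $\lfloor \widetilde q_1 \rfloor$, which lies in $[\widetilde q_1/2, \widetilde q_1]$ because $\widetilde q_1 \geq 2$.
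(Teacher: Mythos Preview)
Your proof is correct and follows essentially the same route as the paper's. Your closed-form choice $a_n = \lfloor(\widetilde q_n - q_{n-2})/q_{n-1}\rfloor$ is precisely the paper's ``largest integer $a\geq 1$ with $aq_{n-1}+q_{n-2}\leq \widetilde q_n$'', and the two verifications of \eqref{qNbounds} differ only in bookkeeping (the paper bounds $\widetilde q_M \leq (a_M+1)q_{M-1}+q_{M-2}\leq 2q_M$, while you write $q_n > \widetilde q_n - q_{n-1} \geq \widetilde q_n/2$).
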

\begin{proof}
The proof will proceed by recursively defining a sequence of integers $(a_n)_1^N$ and then letting $x$ be the unique number in $(0,1)$ whose partial quotients are given by $(a_n)_1^N$. Note that once this process is completed, for every $1\leq M\leq N$ the value of $q_M$ can be computed from \eqref{recursion2} using only the data points $(a_n)_1^M$ together with the initial values $q_{-1} = 0$, $q_0 = 1$. Thus in our recursive step, once we have defined $(a_n)_1^M$, we may treat $(q_n)_1^M$ as also defined.

Fix $1\leq M\leq N$, and suppose that the values $(a_n)_1^{M - 1}$ have been fixed, and that the resulting values $(q_n)_1^{M - 1}$ all satisfy \eqref{qNbounds}. In particular, when $n = M - 1$, \eqref{qNbounds} holds. (If $M = 1$, this is due to the assumption on $\w q_0$ rather than to the induction hypothesis.) Let $a_M$ be the largest integer $a\geq 1$ such that $a q_{M - 1} + q_{M - 2} \leq \w q_M$. Such an integer exists because
\[
\w q_M \geq 2\w q_{M - 1} \geq 2 q_{M - 1} \geq q_{M - 1} + q_{M - 2}.
\]
Let $q_M$ be given by \eqref{recursion2}. Then
\[
q_M\leq \w q_M \leq (a_M + 1) q_{M - 1} + q_{M - 2}
\leq 2 (a_M q_{M - 1} + q_{M - 2}) = 2 q_M,
\]
i.e. \eqref{qNbounds} holds when $n = M$. This completes the recursive step.
\end{proof}

\subsection{Data progressions}
\label{subsectiondata}
Fix $d\geq 1$. In the previous subsection, we learned how the Diophantine properties of an irrational number $x$ are encoded in the sequence of denominators of the convergents of the continued fraction expansion of $x$. Continuing with this theme, given an irrational point $\xx\in\R^d\butnot\Q^d$ we would like to find a structure which encodes the Diophantine properties of $\xx$. It turns out that the appropriate structure for this encoding is given by the following definition:

\begin{definition}
\label{definitiondataprogression}
Let $\Delta = (A_k,i_k)_{k = 1}^\infty$ be a pair of sequences, so that $A_k\in\R$ and $i_k\in\{1,\ldots,d\}$ for all $k\in\N$. Assume that $\{i_k : k\in\N\} = \{1,\ldots,d\}$. For each $i = 1,\ldots,d$ and $k$ sufficiently large, let
\begin{align*}
\ell(i,k) &:= \max\{k' < k : i_{k'} = i\}\\
b_k^{(i)} &:= A_{\ell(i,k) + 1}.
\end{align*}
Equivalently, the sequence  $\Big(\Delta_k := (b_k^{(i)})_{i = 1}^d\Big)_{k = 1}^\infty$ may be defined via the recursive formula
\begin{equation}
\label{bkirec}
b_{k + 1}^{(i)} = \begin{cases}
A_{k + 1} & \text{if } i = i_k\\
b_k^{(i)} & \text{if } i \neq i_k
\end{cases}.
\end{equation}
We say that $\Delta$ is a \emph{$d$-dimensional data progression} if the following hold:
\begin{itemize}
\item[(I)] For all $k$ sufficiently large,
\begin{equation}
\label{dataprog1}
b_{k + 1}^{(i_k)} > b_k^{(i_k)}.
\end{equation}
\item[(II)] The sequence $(\max(\Delta_k))_{k = 1}^\infty$ is unbounded.
\end{itemize}
Given $\Xi:\Rplus^d \to\Rplus$ and $\Psi:\Rplus\to\R$ we write
\[
C_{\Xi,\Psi}(\Delta) = \liminf_{k\to\infty} \left(\Psi\left( \Xi_{i = 1}^d b_k^{(i)}\right) - b_k^{(i_k)} - b_{k + 1}^{(i_k)}\right).
\]
\end{definition}

\begin{remark*}
In the sequel, the notation introduced in this definition will be used without comment.
\end{remark*}

\begin{remark*}
A pair of sequences $\Delta = (A_k,i_k)_{k = 1}^\infty$ is a one-dimensional data progression if and only if $i_k = 1$ for all $k$, and the sequence $(A_k)_{k = 1}^\infty$ is increasing and tends to infinity. The canonical example is the sequence $(q_k)_{k = 1}^\infty$ of denominators of convergents of an irrational number $x \in \R \butnot \Q$.
\end{remark*}

%Note that if $\Delta$ is a $d$-dimensional data progression, then for all $k\in\N$ and $i = 1,\ldots,d$ we have
%\begin{equation}
%\label{ikdef}
%\begin{cases}
%b_{k + 1}^{(i)} > b_k^{(i)} & \text{if } i = i_k\\
%b_{k + 1}^{(i)} = b_k^{(i)} & \text{if } i \neq i_k
%\end{cases}.
%\end{equation}

\begin{lemma}
\label{lemmacondition}
Fix $\Theta:[1,\infty)^d\to[1,\infty)$ and $\psi:[1,\infty)\to(0,\infty)$. Let $\Xi = \log\Theta\exp$ and let $\Psi = -\log\psi\exp$. Suppose that $\Xi$ and $\Psi$ are uniformly continuous and coordinatewise increasing.
\begin{itemize}
\item[(i)] For each $\xx\in\R^d\butnot\Q^d$, there exists a $d$-dimensional data progression $\Delta$ such that
\begin{equation}
\label{condition1}
C_{\Htheta,\psi}(\xx) \lesssim \exp C_{\Xi,\Psi}(\Delta).
\end{equation}
\item[(ii)] Conversely, for each $d$-dimensional data progression $\Delta$, there exists $\xx\in\R^d\butnot\Q^d$ such that
\begin{equation}
\label{condition2}
C_{\Htheta,\psi}(\xx) \gtrsim_{\psi,\Theta} \exp C_{\Xi,\Psi}(\Delta).
\end{equation}
\end{itemize}
In particular
\[
\sup_{\R^d\butnot\Q^d} C_{\Htheta,\psi} \asymp_{\psi,\Theta} \exp \sup_\Delta C_{\Xi,\Psi}(\Delta),
\]
where the supremum is taken over all $d$-dimensional data progressions $\Delta$.
\end{lemma}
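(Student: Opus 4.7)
The proof has two halves, each of which follows from a common heuristic. If $\rr$ is a rational with $q^{(i)} = \exp b^{(i)}$ whose coordinates are convergents of $\xx$, then $\log H_\Theta(\rr) = \Xi_{i=1}^d b^{(i)}$ by definition, and \eqref{errorapprox} gives $|x_i - p^{(i)}/q^{(i)}| \asymp \exp(-b^{(i)} - b_{\text{next}}^{(i)})$. Choosing $i^\star$ so that this single-coordinate error realizes $\|\xx-\rr\|$, uniform continuity of $\Xi$ and $\Psi$ lets one pass to logarithms and obtain
\[
\log \frac{\|\xx-\rr\|}{\psi\circ H_\Theta(\rr)} = \Psi\!\left(\Xi_{i=1}^d b^{(i)}\right) - b^{(i^\star)} - b_{\text{next}}^{(i^\star)} + O(1).
\]
A data progression is precisely a combinatorial record of how the vector $(b^{(i)})_i$ evolves as one advances coordinates one at a time.

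For (i), given $\xx\in\R^d\butnot\Q^d$ I would let $(q_n^{(i)})_n$ be the convergent denominators of $x_i$ and, starting after each coordinate has been advanced at least once, iteratively let $i_k$ be the coordinate maximizing the current error (so $i^\star = i_k$ in the display above) and $A_{k+1}$ the logarithm of the next convergent denominator in that coordinate. Then $\Delta$ satisfies condition (I) of Definition~\ref{definitiondataprogression} (convergent denominators strictly increase) and (II) (some coordinate has unboundedly many convergents because $\xx\notin\Q^d$); the heuristic identity with $i^\star = i_k$ yields \eqref{condition1} upon taking liminf, with an absolute implied constant.

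For (ii), given $\Delta$ I would synthesize $\xx$ coordinate by coordinate: the distinct values $\beta_1^{(i)} < \beta_2^{(i)} < \cdots$ taken by $(b_k^{(i)})_k$ form a strictly increasing sequence, and after thinning to enforce the doubling condition $\beta_{m+1}^{(i)}-\beta_m^{(i)} \geq \log 2$, Lemma~\ref{lemmaabstractconvergents} produces $x_i$ with $q_m^{(i)} \asymp \exp \beta_m^{(i)}$; condition (II) of $\Delta$ guarantees some coordinate is irrational, so $\xx\notin\Q^d$. Given any $\rr\to\xx$, Lemma~\ref{lemmaconvergents} allows one to reduce (at the cost of an absolute constant) to $q^{(i)} = q_{n(i)}^{(i)}$ for some indices $n(i)$. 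Let $k^* = k^*(\rr)$ be the largest step with $b_{k^*}^{(i)} \leq \beta_{n(i)}^{(i)}$ for every $i$; because each coordinate's updates traverse its $\beta$-sequence in order, maximality forces $b_{k^*}^{(i_{k^*})} = \beta_{n(i_{k^*})}^{(i_{k^*})}$ and $b_{k^*+1}^{(i_{k^*})} = \beta_{n(i_{k^*})+1}^{(i_{k^*})}$. Combining $\|\xx-\rr\| \geq |x_{i_{k^*}} - p^{(i_{k^*})}/q^{(i_{k^*})}| \asymp \exp(-b_{k^*}^{(i_{k^*})}-b_{k^*+1}^{(i_{k^*})})$ with coordinate-wise monotonicity of $\Xi$ and $\Psi$ applied to $b_{k^*}^{(i)} \leq \beta_{n(i)}^{(i)}$ (which, via uniform continuity, gives $\Psi(\Xi_{i=1}^d b_{k^*}^{(i)}) \leq -\log\psi\circ H_\Theta(\rr) + O_{\Theta,\psi}(1)$) yields
\[
\log\frac{\|\xx-\rr\|}{\psi\circ H_\Theta(\rr)} \;\geq\; \Psi\!\left(\Xi_{i=1}^d b_{k^*}^{(i)}\right) - b_{k^*}^{(i_{k^*})} - b_{k^*+1}^{(i_{k^*})} - O_{\Theta,\psi}(1).
\]
Since $\rr\to\xx$ forces $k^*\to\infty$, taking liminf produces \eqref{condition2}.

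The delicate point will be the matching $\rr\mapsto k^*(\rr)$ in part (ii): one must verify that $k^*$ is well-defined for $\rr$ sufficiently close to $\xx$, that $k^*(\rr)\to\infty$ as $\rr\to\xx$ (with care when $\xx$ has rational coordinates), and that the thinning step used to invoke Lemma~\ref{lemmaabstractconvergents} does not affect the liminf computed from $\Delta$. The ``in particular'' conclusion is then immediate: (i) gives $\sup_{\R^d\butnot\Q^d} C_{H_\Theta,\psi} \lesssim \exp\sup_\Delta C_{\Xi,\Psi}(\Delta)$, and (ii) gives the reverse inequality up to a multiplicative constant depending on $\psi$ and $\Theta$.
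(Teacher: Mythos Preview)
Your plan matches the paper's closely. Part (i) is correct and is essentially the paper's argument: your greedy ``advance the coordinate with the largest current error'' is exactly the paper's device of indexing all pairs $(n,i)$ in increasing order of $q_n^{(i)}q_{n+1}^{(i)}$, and the remaining steps go through.

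Part (ii) has the right architecture, but your definition of $k^*$ contains a genuine gap. You take $k^*$ to be the largest $k$ with $b_k^{(i)}\le\beta_{n(i)}^{(i)}$ for all $i$ and then claim that maximality forces $b_{k^*+1}^{(i_{k^*})}=\beta_{n(i_{k^*})+1}^{(i_{k^*})}$. Maximality only forces $b_{k^*+1}^{(i_{k^*})}$ to be the \emph{next distinct value} of $(b_k^{(i_{k^*})})_k$ after $\beta_{n(i_{k^*})}^{(i_{k^*})}$; once thinning has been performed this next value may well be one of the discarded values, hence lie in $[\beta_{n(i_{k^*})}^{(i_{k^*})},\,\beta_{n(i_{k^*})}^{(i_{k^*})}+\log 2)$ rather than equal $\beta_{n(i_{k^*})+1}^{(i_{k^*})}$. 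In that case your asserted lower bound $|x_{i_{k^*}}-r_{i_{k^*}}|\gtrsim \exp\bigl(-b_{k^*}^{(i_{k^*})}-b_{k^*+1}^{(i_{k^*})}\bigr)$ fails in the direction you need: the true error is $\asymp\exp\bigl(-\beta_{n(i_{k^*})}^{(i_{k^*})}-\beta_{n(i_{k^*})+1}^{(i_{k^*})}\bigr)$, which can be arbitrarily smaller than $\exp\bigl(-2\beta_{n(i_{k^*})}^{(i_{k^*})}\bigr)$, so the displayed inequality does not follow. (You correctly flagged thinning as the delicate point, but it does break this specific step.)

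The fix, which is what the paper does, is to anchor at the \emph{next} retained value rather than the current one: let $k_i$ be one less than the first step at which $b_k^{(i)}$ attains $\beta_{n(i)+1}^{(i)}$, so that $b_{k_i+1}^{(i)}=\beta_{n(i)+1}^{(i)}$ exactly while $b_{k_i}^{(i)}\in[\beta_{n(i)}^{(i)},\,\beta_{n(i)}^{(i)}+\log 2)$. Then $i_{k_i}=i$ for every $i$ (since $b_{k_i}^{(i)}\ne b_{k_i+1}^{(i)}$), and setting $k=\min_i k_i$ gives $k_{i_k}=k$, whence $b_k^{(i_k)}+b_{k+1}^{(i_k)}=\beta_{n(i_k)}^{(i_k)}+\beta_{n(i_k)+1}^{(i_k)}+O(1)$. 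This yields precisely the single-coordinate error estimate you need, and with this adjustment the remainder of your argument for (ii) is correct.
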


\begin{remark*}
The maps $\xx\mapsto \Delta$ and $\Delta\mapsto \xx$ implicitly described in parts (i) and (ii) of Lemma \ref{lemmacondition}, respectively, are in fact independent of $\Theta$ and $\psi$, as can be easily seen from the proof of Lemma \ref{lemmacondition}. On an intuitive level these maps are ``rough inverses'' of each other, but we do not make this rigorous.
\end{remark*}

\begin{remark}
\label{remarkcondition}
If $\Theta \in\{\tmax,\tmin,\tprod\}$, then $\Xi\in\{\tmax,\tmin,\tsum\}$ is uniformly continuous and coordinatewise increasing. If $\psi$ is a Hardy $L$-function whose decay is no faster than polynomial, then $\Psi$ is uniformly continuous and increasing (Lemma \ref{lemmahardy3}). Thus for the situations considered in this paper, the hypotheses of Lemma \ref{lemmacondition} will be immediately satisfied.
\end{remark}

\ignore{
%\begin{lemma}
%\label{lemmaconditionold}
%Fix $\xx\in\R^d$, $\Theta:\N^d\to\N$, and $\psi:\N\to(0,\infty)$. If $0 < t < C_{\Htheta,\psi}(\xx)$, then there exist $d$ unbounded increasing sequences $\big((b_k^{(i)})_{k = 1}^\infty\big)_{i = 1}^d$ such that
%\begin{itemize}
%\item[(I)] For each $k\in\N$, there is exactly one $i = 1,\ldots,d$ such that
%\begin{equation}
%\label{ikdefold}
%b_{k + 1}^{(i)} > b_k^{(i)}.
%\end{equation}
%\item[(II)] If $i_k$ denotes the unique $i$ satisfying \eqref{ikdef}, then
%\begin{equation}
%\label{contradictionhypothesis3old}
%b_k^{(i_k)} + b_{k + 1}^{(i_k)} \lesssim_\plus -\log\psi\exp\left(\Xi_{i = 1}^d b_k^{(i)}\right) - \log(t).
%\end{equation}
%Here $\Xi:\log(\N)^d\to\log(\N)$ denotes the map $\Xi = \log\Theta\exp$.
%\end{itemize}
%The implied constant of \eqref{contradictionhypothesis3} is independent of $\xx$, $\Theta$, $\psi$, and $t$.
%\end{lemma}
}% end ignore
\begin{proof}[Proof of \text{(i)}]
Fix $\xx\in\R^d\butnot\Q^d$, and for each $i = 1,\ldots,d$, let $\big(p_n^{(i)}/q_n^{(i)}\big)_{n = 1}^{N_i}$ be the convergents of the continued fraction expansion of $x_i$. Here $N_i\in\N\cup\{\infty\}$, with $N_i = \infty$ for at least one $i$. Let $E_i = \{1,\ldots,N_i - 1\}$ if $N_i \in\N$, and $E_i = \N$ if $N_i = \infty$. Let $E = \{(n,i) : i = 1,\ldots,d, \; n\in E_i\}$, and define a map $f:E\to\N$ by letting $f(n,i) = q_n^{(i)}q_{n + 1}^{(i)}$. Let $\big((m_k,i_k)\big)_{k = 1}^\infty$ be an indexing of $E$ such that the map $k\mapsto f(m_k,i_k)$ is increasing. Then for each $k\in\N$, let
\[
A_{k + 1} = \log(q_{m_k + 1}^{(i_k)}),
\]
and let $\Delta = (A_k,i_k)_{k = 1}^\infty$. Then $b_k^{(i_k)} = \log(q_{m_k}^{(i_k)})$ and $b_{k + 1}^{(i_k)} = \log(q_{m_k + 1}^{(i_k)})$. It follows immediately that $\Delta$ is a $d$-dimensional data progression. To demonstrate \eqref{condition1}, let
\begin{align*}
L(i,k) &= \min\{k'\in\N: k'\geq k, i_{k'} = i\}\\
n(i,k) &= m_{L(i,k)} = m_{\ell(i,k)} + 1,
\end{align*}
so that
\[
b_k^{(i)} = \log(q_{n(i,k)}^{(i)}).
\]
Now
\begin{align*}
\min_{i = 1}^d\left(q_{n(i,k)}^{(i)}q_{n(i,k) + 1}^{(i)}\right) &= \min_{i = 1}^d f(n(i,k),i)\\
&= \min_{i = 1}^d f(m_{L(i,k)},i_{L(i,k)}) \noreason\\
&= f(m_k,i_k) \since{$L(i_k,k) = k$, and $L(i,k) \geq k$ for all $i$}\\
&= q_{m_k}^{(i_k)} q_{m_k + 1}^{(i_k)}\\
&= \exp(b_k^{(i_k)} + b_{k + 1}^{(i_k)}).
\end{align*}
Let $\rr_k = \Big(p_{n(i,k)}^{(i)} / q_{n(i,k)}^{(i)}\Big)_{i = 1}^d$. Then
\begin{align*}
C_{\Htheta,\psi}(\xx) &\leq \liminf_{k\to\infty} \frac{\|\xx - \rr_k\|}{\psi\circ\Htheta(\rr_k)}\\
&\asymp \liminf_{k\to\infty} \max_{i = 1}^d \frac{1}{q_{n(i,k)}^{(i)}q_{n(i,k) + 1}^{(i)}}\frac{1}{\psi\circ\Htheta(\rr_k)} \by{\eqref{errorapprox}}\\
&= \liminf_{k\to\infty} \frac{1}{\exp(b_k^{(i_k)} + b_{k + 1}^{(i_k)})}\frac{1}{\psi\left(\Theta_{i = 1}^d q_{n(i,k)}^{(i)}\right)} = \exp C_{\Xi,\Psi}(\Delta). 
\noreason&&\qedhere\end{align*}
\end{proof}

\begin{proof}[Proof of \text{(ii)}]
Let $\Delta = (A_k,i_k)_{k = 1}^\infty$ be a $d$-dimensional data progression. For each $i = 1,\ldots,d$, define an increasing sequence $(k(i,n))_{n = 0}^{N_i}$ recursively: Let $k(i,0)$ be large enough so that $b_{k(i,0)}^{(i)}$ is defined. Now fix $n\geq 0$, and suppose that $k(i,n)$ has been defined. Let $k(i,n + 1)$ be the smallest value of $k$ such that
\[
b_k^{(i)} \geq b_{k(i,n)}^{(i)} + \log(2)
\]
if such a value exists; otherwise let $N_i = n$. Then by Lemma \ref{lemmaabstractconvergents}, there exists $x_i\in\R$ satisfying
\[
q_n^{(i)} \asymp \exp(b_{k(i,n)}^{(i)}) \all 1\leq n\leq N_i,
\]
where $(p_n^{(i)}/q_n^{(i)})_{n = 1}^{N_i}$ are the convergents of the continued fraction expansion of $x_i$. By (II) of the definition of a data progression, we have $N_i = \infty$ for at least one $i$ and thus $\xx := (x_1,\ldots,x_d)\notin\Q^d$. We will demonstrate \eqref{condition2}. Fix $\rr\in\Q^d$. For each $i = 1,\ldots,d$, by Lemma \ref{lemmaconvergents} there exists $n_i = n_i(\rr)$ such that $H_0(r_i) \gtrsim q_{n_i}^{(i)}$ and $|x_i - r_i| \gtrsim |x - p_{n_i}^{(i)} / q_{n_i}^{(i)}|$. Let
\begin{align*}
k_i &= k_i(\rr) = k(i,n_i(\rr) + 1) - 1\\
k &= k(\rr) = \min_{i = 1}^d k_i(\rr),
\end{align*}
so that
\[
b_{k(i,n_i)}^{(i)} \leq b_{k_i}^{(i)} \leq b_{k(i,n_i)}^{(i)} + \log(2).
\]
Here the understanding is that if $n_i = N_i$, then $k_i = \infty$ and $b_{k_i}^{(i)} = \lim_{k\to\infty} b_k^{(i)}$. Then
\[
H_0(r_i) \gtrsim q_{n_i}^{(i)} \asymp \exp(b_{k(i,n_i)}^{(i)}) \asymp \exp(b_{k_i}^{(i)}) \geq \exp(b_k^{(i)}).
\]
Using the fact that $\Xi$ and $\Psi$ are uniformly continuous and coordinatewise increasing, we deduce that
\[
\psi\circ\Htheta(\rr) = \psi\left(\Theta_{i = 1}^d H_0(r_i)\right)
\lesssim_{\psi,\Theta} \psi\left(\Theta_{i = 1}^d\exp(b_k^{(i)})\right).
\]
On the other hand, for each $i$ such that $k_i\neq \infty$ we have 
\[
|x_i - r_i| \gtrsim \left|x_i - \frac{p_{n_i}^{(i)}}{q_{n_i}^{(i)}}\right| \asymp \frac{1}{q_{n_i}^{(i)} q_{n_i + 1}^{(i)}} \asymp \frac{1}{\exp(b_{k(i,n_i)}^{(i)} + b_{k(i,n_i + 1)}^{(i)})} \asymp \frac{1}{\exp(b_{k_i}^{(i)} + b_{k_i + 1}^{(i)})}\cdot
\]
Since $i_{k_i} = i \all i$, we have $k_{i_k} = k$. Thus
\[
\|\xx - \rr\| \geq |x_{i_k} - r_{i_k}| \gtrsim \frac{1}{\exp(b_k^{(i_k)} + b_{k + 1}^{(i_k)})}\cdot
\]
Combining, we have
\[
\frac{\|\xx - \rr\|}{\psi\circ\Htheta(\rr)}
\gtrsim_{\psi,\Theta} \frac{1}{\exp(b_k^{(i_k)} + b_{k + 1}^{(i_k)})}\frac{1}{\psi\left(\Theta_{i = 1}^d\exp(b_k^{(i)})\right)}\cdot
\]
Let $(\rr_j)_1^\infty$ be a sequence in $\Q^d$ along which the liminf in \eqref{Cdef} is achieved. Since $\|\xx - \rr_j\| \to 0$, it follows that for all $i = 1,\ldots,d$, we have $n_i(\rr_j)\to \infty$ and thus $k_i(\rr_j)\to\infty$. So $k(\rr_j)\to\infty$, and thus
\begin{align*}
C_{\Htheta,\psi}(\xx)
&=_{\phantom{\psi,\Theta}} \lim_{j\to\infty} \frac{\|\xx - \rr_j\|}{\psi\circ H_\Theta(\rr_j)}\\
&\gtrsim_{\psi,\Theta} \liminf_{k\to\infty}  \frac{1}{\exp(b_k^{(i_k)} + b_{k + 1}^{(i_k)})}\frac{1}{\psi\left(\Theta_{i = 1}^d\exp(b_k^{(i)})\right)}
= \exp C_{\Xi,\Psi}(\Delta).
\qedhere\end{align*}
\end{proof}

\draftnewpage
\section{Proof of Theorem \ref{theoremminprod} and formulas \eqref{exponents2}, \eqref{exponents3}}
\label{sectionminprod}

We begin by reformulating Theorem \ref{theoremminprod} using Theorem \ref{theoremexponents}:

\begin{proposition}
\label{propositionminprod}
Fix $d\geq 1$ and $\Theta\in\{\tmax,\tmin,\tprod\}$, and if $\Theta = \tmax$ assume that $d \leq 2$. Let
\begin{equation}
\label{betaddef}
\beta_d = \begin{cases}
2 & \Theta = \tmax,\tmin\\
2/d & \Theta = \tprod
\end{cases}\cdot
\end{equation}
Then $\psi_{\beta_d}$ is uniformly and optimally Dirichlet on $\R^d$ with respect to the height function $\Htheta$.
\end{proposition}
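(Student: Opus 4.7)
The plan is to apply Lemma~\ref{lemmacondition} with $\Psi(t) = \beta_d t$ and $\Xi = \log\Theta\exp$, which equals $\tmin$, $\tmax$, or $\tsum$ depending on $\Theta$. Since $\Psi$ is linear (hence uniformly continuous and increasing) and each such $\Xi$ is uniformly continuous and coordinatewise increasing (Remark~\ref{remarkcondition}), the lemma reduces the proposition to two claims: (a) $\sup_\Delta C_{\Xi,\Psi}(\Delta) < \infty$, giving uniform Dirichletness; and (b) there exists some data progression $\Delta$ with $C_{\Xi,\Psi}(\Delta) > -\infty$, which, combined with part (ii) of the lemma, yields optimality.

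For (b), I would use a \emph{cyclic} data progression: set $i_k \equiv k \pmod d$ and $A_k = ck$ for some fixed $c > 0$. A direct computation shows that for each sufficiently large $k$, the values $b_k^{(1)},\ldots,b_k^{(d)}$ form a collection of $d$ evenly spaced reals in $[c(k-d+1),ck]$, the sum $b_k^{(i_k)} + b_{k+1}^{(i_k)}$ equals $2ck - c(d-2)$, and $\Psi(\Xi_{i=1}^d b_k^{(i)})$ differs from this by an explicit constant depending only on $c$ and $d$. In each of the three cases this makes $C_{\Xi,\Psi}(\Delta)$ equal to a finite constant, establishing (b).

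For (a), the cases are handled separately. The $\tmin$ case is immediate: since $b_{k+1}^{(i_k)} > b_k^{(i_k)} \geq \min_i b_k^{(i)}$, the expression $2\min_i b_k^{(i)} - b_k^{(i_k)} - b_{k+1}^{(i_k)}$ is always strictly negative. For the $\tmax$ case with $d \leq 2$, I would focus on the \emph{max-raising} steps $k^*$ where $\max_i b_{k^*+1}^{(i)} > \max_i b_{k^*}^{(i)}$; by condition (II) of Definition~\ref{definitiondataprogression} these form an infinite subsequence. If the same coordinate raises the max twice in a row, then at the second such step that coordinate already equals the max, so the expression collapses to a strictly negative quantity. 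Otherwise the two coordinates alternate as max-raisers, and, writing $\mathcal{M}_j$ for the maximum value after the $j$th raising, the telescoping identity
\[
\sum_{j=2}^N \left[2\mathcal{M}_{j-1} - \mathcal{M}_{j-2} - \mathcal{M}_j\right] = (\mathcal{M}_1 - \mathcal{M}_0) - (\mathcal{M}_N - \mathcal{M}_{N-1}) \leq \mathcal{M}_1 - \mathcal{M}_0
\]
shows that the Cesaro average (hence the liminf) of the expression along this subsequence is $\leq 0$.

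The $\tprod$ case is where I expect the main obstacle. The relevant expression $(2/d)\sum_i b_k^{(i)} - b_k^{(i_k)} - b_{k+1}^{(i_k)}$ admits an analogous global analysis, but the bookkeeping is more delicate for $d \geq 3$. I would use the identity $\sum_{k=1}^K (b_{k+1}^{(i_k)} - b_k^{(i_k)}) = \sum_i (b_{K+1}^{(i)} - b_1^{(i)})$ (immediate from \eqref{bkirec}) to control $\sum_k (b_k^{(i_k)} + b_{k+1}^{(i_k)})$ up to a boundary term, and decompose $\sum_k \sum_i b_k^{(i)}$ into pieces indexed by the update intervals of each coordinate. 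Summing the expression along the max-raising subsequence and using that each coordinate $i_{k^*}$ satisfies $b_{k^*}^{(i_{k^*})} \geq \mathcal{M}_{j-m}$ (where $m$ counts how many max-raisings ago it was last updated), I would expect a telescope-with-shifts analogous to the $\tmax$ case to yield a Cesaro bound $\liminf \leq C_d$ for a $d$-dependent constant. Getting this to work uniformly over all data progressions---not just cyclic ones---is the crux of the proof.
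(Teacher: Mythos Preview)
Your optimality argument via the arithmetic data progression works, though the paper takes a shorter route: pick $\xx = (x_1,\ldots,x_d)$ with each $x_i$ badly approximable and observe directly that $\|\xx-\rr\| \gtrsim \min_i H_0(r_i)^{-2} = \Hmin(\rr)^{-2} \geq \Hprod(\rr)^{-2/d} \geq \Hmax(\rr)^{-2}$, bypassing part~(ii) of Lemma~\ref{lemmacondition} entirely. Your $\tmin$ argument is correct and in fact simpler than the paper's treatment of that case. Your $\tmax$, $d=2$ telescoping along max-raising steps is also sound once one notes that in the alternating regime $b_{k_j^*}^{(i_{k_j^*})} \geq \mathcal{M}_{j-2}$, so the expression at $k_j^*$ is bounded above by $2\mathcal{M}_{j-1} - \mathcal{M}_{j-2} - \mathcal{M}_j$ and the telescope goes through.

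The genuine gap is precisely where you flag it: $\tprod$ for $d\geq 2$. Your Ces\`aro-along-max-raisings scheme has no clear form once the update pattern is arbitrary; for $d\geq 3$ the coordinate that raises the max, and what the other $d-1$ coordinates do between raisings, are both uncontrolled, and the hoped-for ``telescope-with-shifts'' does not materialize. The paper's device is a \emph{variance} monotonicity that replaces all of this bookkeeping. Assume for contradiction that $C_{\Xi,\Psi}(\Delta) > 1$, so that $b_k^{(i_k)} + b_{k+1}^{(i_k)} \leq 2\Av(\Delta_k) - 1$ for all large $k$ (immediate for $\tprod$; via $\min\leq\Av$ for $\tmin$). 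Together with $b_{k+1}^{(i_k)} > b_k^{(i_k)}$ this forces $|b_{k+1}^{(i_k)} - \Av(\Delta_k)| \leq |b_k^{(i_k)} - \Av(\Delta_k)|$, and since only coordinate $i_k$ moves, one gets $\Var(\Delta_{k+1}) \leq \Var(\Delta_k)$. On max-raising steps one further obtains a uniform drop $\Var(\Delta_{k+1}) \leq \Var(\Delta_k) - 1/\max(4,d)$, whence $\Var(\Delta_k)\to -\infty$, a contradiction. A single monotone scalar handles $\tmin$, $\tprod$, and (with a small variant) $\tmax$ at $d=2$ uniformly, which is the idea your sketch is missing.
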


Proving this reformulation is sufficient to prove Theorem \ref{theoremminprod}. Indeed, Proposition \ref{propositionminprod} immediately implies that $\omega_d(\Htheta) = \beta_d$; replacing $\beta_d$ by $\omega_d(\Htheta)$ in Proposition \ref{propositionminprod} yields Theorem \ref{theoremminprod}.

Proposition \ref{propositionminprod} also implies \eqref{exponents2} and \eqref{exponents3}, and the case $d = 2$ of \eqref{exponents1}.

\begin{remark*}
The case $d = 1$ of Proposition \ref{propositionminprod} merely states that $\psi_2$ is uniformly and optimally Dirichlet on $\R$ with respect to the standard height function $H_0$. Thus, in the proof we may assume $d\geq 2$.
\end{remark*}

\begin{proof}[Proof of Uniform Dirichletness]
By Lemma \ref{lemmacondition}, it suffices to show that
\[
\sup_\Delta C_{\Xi,\Psi}(\Delta) \leq 1,
\]
where $\Xi = \log\Theta\exp$, $\Psi = -\log\psi_{\beta_d}\exp$, and the supremum is taken over $d$-dimensional data progressions $\Delta$. By contradiction suppose that $C_{\Xi,\Psi}(\Delta) > 1$ for some $d$-dimensional data progression $\Delta = (A_k,i_k)_{k = 1}^\infty$. Then for all $k$ sufficiently large, we have
\begin{equation}
\label{contradictionhypothesis4}
b_k^{(i_k)} + b_{k + 1}^{(i_k)} \leq \beta_d\operatorname\Xi_{i = 1}^d b_k^{(i)} - 1.
\end{equation}
Let $\Var(\Delta)$ and $\Av(\Delta)$ denote the variance and mean (average) of a $d$-tuple $\Delta$, respectively. Let $K = \{k\in\N : \max(\Delta_{k + 1}) > \max(\Delta_k)\}$.% Note that part (II) of the definition of a data progression implies that $\#(K) = \infty$.

\ignore{
Let $C$ denote the implied constant of \eqref{contradictionhypothesis3}; we claim that
\[
\sup_{\R^d\butnot\Q^d} C_{\Hmax,\psi} \leq e^{C + 1},
\]
demonstrating that $\psi$ is uniformly Dirichlet. By contradiction, suppose that $C_{\Hmax,\psi}(\xx) > e^{C + 1}$ for some $\xx\in\R^d$. Let $\psi = \psi_{\beta_d}$ and $t = e^{C + 1}$. Then by Lemma \ref{lemmacondition}, there exist $d$ increasing sequences $\big((b_k^{(i)})_{k = 1}^\infty\big)_{i = 1}^d$ satisfying (I) and (II) of Lemma \ref{lemmacondition}. In particular, \eqref{contradictionhypothesis3} becomes
\begin{equation}
\label{contradictionhypothesis4}
b_k^{(i_k)} + b_{k + 1}^{(i_k)} \leq -\log\psi\exp\left(\operatorname\Xi_{i = 1}^d b_k^{(i)}\right) - 1.
\end{equation}
Let $\Var(\Delta)$ and $\Av(\Delta)$ denote the variance and mean (average) of a data set $\Delta$, respectively. For each $k$, let $\Delta_k$ denote the data set $(b_k^{(i)})_{i = 1}^d$. Let $K = \{k\in\N : \max(\Delta_{k + 1}) > \max(\Delta_k)\}$.
}% end ignore

\begin{claim}
\label{claimvardec}
We have
\begin{align} \label{vardec1}
\Var(\Delta_{k + 1}) &\leq \Var(\Delta_k) \hspace{0.9 in} \all k\in\N\\ \label{vardec2}
\Var(\Delta_{k + 1}) &\leq \Var(\Delta_k) - 1/\max(4,d) \all k\in K.
\end{align}
\end{claim}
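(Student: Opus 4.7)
My plan is to compute the change in variance directly, exploiting the fact that only the $i_k$-th coordinate changes between $\Delta_k$ and $\Delta_{k+1}$. Writing $b = b_k^{(i_k)}$, $b' = b_{k+1}^{(i_k)}$, $\delta = b' - b$, and $\mu = \Av(\Delta_k)$, expanding the identity $\Var(\Delta_k) = \frac{1}{d}\sum_i (b_k^{(i)})^2 - \mu^2$ gives
\[
\Var(\Delta_{k+1}) - \Var(\Delta_k) \;=\; \frac{\delta}{d}\left[\,(b + b') - 2\mu - \frac{\delta}{d}\,\right].
\]
By axiom (I) of a data progression, $\delta > 0$ for all sufficiently large $k$, so both parts of the claim reduce to extracting from \eqref{contradictionhypothesis4} an upper bound on $s := (b+b') - 2\mu$ and, for $k\in K$, a lower bound on $\delta$.

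For $\Theta \in \{\tmin, \tprod\}$, the averaging inequalities $\tmin \leq \Av$ (paired with $\beta_d = 2$) and $\tsum = d\cdot\Av$ (paired with $\beta_d = 2/d$) immediately upgrade \eqref{contradictionhypothesis4} to the uniform bound $s \leq -1$. Substituting into the variance formula yields $\Var(\Delta_{k+1}) - \Var(\Delta_k) \leq -\delta/d - \delta^2/d^2 \leq 0$, which is \eqref{vardec1}. For $k \in K$, the inequality $b' > \max(\Delta_k) \geq \mu$ together with $b + b' \leq 2\mu - 1$ forces $b < \mu - 1$ and hence $\delta = b' - b > 1$. Since $\delta \mapsto -\delta/d - \delta^2/d^2$ is decreasing on $(0,\infty)$, the variance drop strictly exceeds $(d+1)/d^2$; a short arithmetic check confirms $(d+1)/d^2 \geq 1/\max(4,d)$ for every $d \geq 1$ (the cases $d \leq 4$ amount to the inequality $d^2 - 4d - 4 \leq 0$), yielding \eqref{vardec2}.

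The case $\Theta = \tmax$ must be handled separately because the bound $\tmax \leq \Av$ fails; the subcase $d = 1$ is vacuous, so only $d = 2$ requires work. Writing $\Delta_k = \{a, c\}$ with $a = b_k^{(i_k)}$, the condition \eqref{contradictionhypothesis4} reads $2a + \delta \leq 2\max(a,c) - 1$. This excludes $a \geq c$ (which would force $\delta \leq -1$, contradicting $\delta > 0$), and in the remaining case $a < c$ gives $\delta \leq 2(c-a) - 1$. A direct expansion collapses the variance formula to $\Var(\Delta_{k+1}) - \Var(\Delta_k) = \delta(\delta - 2(c-a))/4 \leq -\delta/4$, proving \eqref{vardec1}. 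Membership in $K$ is equivalent to $\delta > c - a$, which combined with $\delta \leq 2(c-a) - 1$ forces $c - a > 1$ and $\delta > 1$, so the variance drops by strictly more than $1/4 = 1/\max(4,2)$.

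The only real conceptual step is identifying $\delta$ and $s$ as the right parameters to track, after which each case is a short arithmetic argument. The averaging inequality $\beta_d \Xi \leq 2\Av$ that drives the $\tmin$/$\tprod$ analysis is precisely what breaks down for $\tmax$ in dimension $d \geq 3$, which is both why Proposition \ref{propositionminprod} restricts to $d \leq 2$ in the $\tmax$ case and why the Diophantine behaviour there is genuinely different, as reflected in Theorem \ref{theoremHmax}.
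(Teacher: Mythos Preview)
Your proof is correct and follows essentially the same approach as the paper: both exploit that only the $i_k$-th coordinate changes, reduce \eqref{contradictionhypothesis4} to the bound $b+b' \leq 2\mu - 1$ (respectively $\delta \leq 2(c-a)-1$ when $\Theta = \tmax$, $d=2$), and then bound the variance change. The only difference is that you compute the exact change $\Var(\Delta_{k+1}) - \Var(\Delta_k) = \tfrac{\delta}{d}\bigl[s - \tfrac{\delta}{d}\bigr]$, whereas the paper uses the standard overestimate $\Var(\Delta_{k+1}) \leq \tfrac{1}{d}\sum_i (b_{k+1}^{(i)} - \mu)^2$ with the \emph{old} mean $\mu$, yielding the slightly weaker bound $\tfrac{\delta s}{d}$; this gives you a marginally sharper constant ($(d+1)/d^2$ versus the paper's $1/d$) but changes nothing structurally.
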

The proof is divided into two cases: either $\Theta\in \{\tmin,\tprod\}$, or $\Theta = \tmax$ and $d = 2$.
\begin{subproof}[Proof if $\Theta\in \{\tmin,\tprod\}$]
To begin with, we observe that
\begin{equation}
\label{DeltaVar}
\begin{split}
\Var(\Delta_{k + 1}) - \Var(\Delta_k) &\leq \frac{1}{d}\sum_{i = 1}^d \left(b_{k + 1}^{(i)} - \Av(\Delta_k)\right)^2 - \Var(\Delta_k)\\
&= \frac{1}{d}\left[\left(b_{k + 1}^{(i_k)} - \Av(\Delta_k)\right)^2 - \left(b_k^{(i_k)} - \Av(\Delta_k)\right)^2\right].
\end{split}
\end{equation}
Now by \eqref{contradictionhypothesis4}, we have
\[
b_k^{(i_k)} + b_{k + 1}^{(i_k)} \leq 2\Av(\Delta_k) - 1.
\]
(If $\Theta = \tprod$, then this equation is simply a reformulation of \eqref{contradictionhypothesis4}; if $\Theta = \tmin$, it follows from the fact that $\min(\Delta_k) \leq \Av(\Delta_k)$.) Rearranging gives
\begin{equation}
\label{12penalty}
\Av(\Delta_k) \geq \frac{b_k^{(i_k)} + b_{k + 1}^{(i_k)}}{2} + \frac{1}{2}\cdot
\end{equation}
By \eqref{dataprog1}, the above equation implies that
\[
|b_{k + 1}^{(i_k)} - \Av(\Delta_k)| \leq |b_k^{(i_k)} - \Av(\Delta_k)|.
\]
Combining with \eqref{DeltaVar} completes the proof of \eqref{vardec1}. Now suppose that $k\in K$, and observe that $\Av(\Delta_k) \leq \max(\Delta_k) < \max(\Delta_{k + 1}) = b_{k + 1}^{(i_k)}$. Combining with \eqref{12penalty} yields
\[
|b_{k + 1}^{(i_k)} - \Av(\Delta_k)| \leq |b_k^{(i_k)} - \Av(\Delta_k)| - 1,
\]
and thus
\[
|b_{k + 1}^{(i_k)} - \Av(\Delta_k)|^2 \leq |b_k^{(i_k)} - \Av(\Delta_k)|^2 - 1.
\]
Combining with \eqref{DeltaVar} gives \eqref{vardec2}.
\end{subproof}
\begin{subproof}[Proof if $\Theta = \tmax$ and $d = 2$]
In this case, \eqref{contradictionhypothesis4} becomes
\[
b_k^{(i_k)} + b_{k + 1}^{(i_k)} \leq 2 \max(b_k^{(i_k)},b_k^{(j_k)}) - 1,
\]
where $j_k$ satisfies $\{i_k,j_k\} = \{1,2\}$. Combining with \eqref{dataprog1} gives $b_k^{(i_k)} < b_k^{(j_k)}$, and so rearranging gives
\begin{equation}
\label{12penaltycase2}
b_{k + 1}^{(j_k)} = b_k^{(j_k)} \geq \frac{b_k^{(i_k)} + b_{k + 1}^{(i_k)}}{2} + \frac12.
\end{equation}
By \eqref{dataprog1}, the above equation implies that
\[
|b_{k + 1}^{(i_k)} - b_{k + 1}^{(j_k)}| = |b_{k + 1}^{(i_k)} - b_k^{(j_k)}| \leq |b_k^{(i_k)} - b_k^{(j_k)}|,
\]
demonstrating \eqref{vardec1}. Now suppose that $k\in K$, and observe that $b_k^{(j_k)} = \max(\Delta_k) < \max(\Delta_{k + 1}) = b_{k + 1}^{(i_k)}$. Combining with \eqref{12penaltycase2} gives
\[
|b_{k + 1}^{(i_k)} - b_{k + 1}^{(j_k)}| \leq |b_k^{(i_k)} - b_k^{(j_k)}| - 1,
\]
and thus
\[
|b_{k + 1}^{(i_k)} - b_{k + 1}^{(j_k)}|^2 \leq |b_k^{(i_k)} - b_k^{(j_k)}|^2 - 1.
\]
Since $\Var(\Delta_k) = (1/4)|b_k^{(i_k)} - b_k^{(j_k)}|^2$, this equation is equivalent to \eqref{vardec2}.
\end{subproof}
To complete the proof of Proposition \ref{propositionminprod}, observe that $K$ is infinite by (II) of Definition \ref{definitiondataprogression}. Thus, it follows from Claim \ref{claimvardec} that $\Var(\Delta_k) \to -\infty$. But this contradicts the fact that the variance of a data set is always nonnegative.
\end{proof}
\begin{proof}[Proof of Optimality]
Let $x_1,\ldots,x_d\in\R$ be badly approximable numbers, and let $\xx = (x_1,\ldots,x_d)$. We claim that $C_{\Htheta,\psi_{\beta_d}}(\xx) > 0$, demonstrating the optimality of $\psi_{\beta_d}$. Indeed, for each $\rr\in\Q^d$,
\[
\|\xx - \rr\| = \max_{i = 1}^d \left|x_i - r_i\right| \gtrsim_\xx \max_{i = 1}^d \frac{1}{H^2(r_i)} =  \frac{1}{\Hmin^2(\rr)} \geq \frac{1}{\Hprod^{2/d}(\rr)} \geq \frac{1}{\Hmax^2(\rr)}\cdot
\]
Thus $\|\xx - \rr\| \gtrsim_\xx \psi_{\beta_d}\circ\Htheta(\rr)$, which implies the desired result.
\end{proof}

\draftnewpage
\section{Interlude: Motivation for the value of $\omega_d(\Hmax)$}
\label{sectioninterlude}
Before jumping into the proof of Theorems \ref{theoremnooptimaldirichlet} and \ref{theoremHmax}, in this section we try to motivate the formula \eqref{exponents1}. Our approach is as follows: The notion of a ``data progression'' is very broad, but it is natural to expect that ``worst-case-scenario'' data progressions will behave somewhat regularly. In fact, we will prove a rigorous version of this assertion in Section \ref{sectionHmax}. But for now, let's just see what happens if we restrict our attention to data progressions which behave regularly.

\begin{definition*}
A data progression $\Delta$ is \emph{periodic} if the map $k\mapsto i_k$ is periodic of order $d$, and \emph{geometric} if $A_k = \gamma^k$ for some $\gamma > 1$. The number $\gamma$ is called the \emph{mutliplier}.
\end{definition*}

\begin{remark*}
If a data progression is periodic, then the map $\{1,\ldots,d\}\ni k\mapsto i_k$ must be a permutation.
\end{remark*}

\begin{remark*}
It is shown in Section \ref{sectionHmax} that to determine which functions $\psi$ are Dirichlet on $\R^d$ with respect to $\Hmax$, it is sufficient to consider data progressions which are eventually periodic (Claim \ref{claimrotating}) and asymptotically geometric (Claim \ref{claimtkgammad}).
\end{remark*}

\begin{lemma}
\label{lemmaperiodicgeometric}
Let $\Delta$ be a periodic geometric $d$-dimensional data progression of multiplier $\gamma$. Fix $\alpha\geq 0$, and let $\Psi_\alpha(b) = \alpha b$. Then
\[
C_{\tmax,\Psi_\alpha}(\Delta) = \begin{cases}
-\infty & \text{if } \gamma + \gamma^{-(d - 1)} > \alpha\\
0 & \text{if } \gamma + \gamma^{-(d - 1)} = \alpha\\
\infty & \text{if } \gamma + \gamma^{-(d - 1)} < \alpha
\end{cases}.
\]
\end{lemma}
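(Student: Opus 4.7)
The plan is a direct calculation, exploiting how the periodic cycle interacts with the geometric weights to collapse everything into a single elementary inequality. Since $k \mapsto i_k$ has period $d$ and its image is all of $\{1,\ldots,d\}$ (as the remark following the definition notes, it is a cyclic repetition of a permutation), for all sufficiently large $k$ and every $i \in \{1,\ldots,d\}$ we have $\ell(i,k) = k - j(i,k)$ with $j(i,k) \in \{1,\ldots,d\}$ and $j(i_k,k) = d$. Plugging $A_k = \gamma^k$ into the definition of $b_k^{(i)}$ yields $b_k^{(i)} = \gamma^{\,k-j(i,k)+1}$, so the multiset $\{b_k^{(i)} : i = 1,\ldots,d\}$ equals $\{\gamma^{k-d+1}, \gamma^{k-d+2}, \ldots, \gamma^k\}$; in particular
\[
b_k^{(i_k)} = \gamma^{k-d+1}, \qquad \max_{i=1}^d b_k^{(i)} = \gamma^k.
\]
From the recursion \eqref{bkirec}, $b_{k+1}^{(i_k)} = A_{k+1} = \gamma^{k+1}$. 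Conditions (I) and (II) of Definition \ref{definitiondataprogression} are then immediate from $\gamma > 1$, so $\Delta$ is indeed a data progression.

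Next I substitute into the definition of $C_{\tmax,\Psi_\alpha}(\Delta)$. Since $\Xi = \log\tmax\exp$ acts as the ordinary maximum on real inputs, $\Xi_{i=1}^d b_k^{(i)} = \gamma^k$, and so the $k$th term of the defining liminf becomes
\[
\Psi_\alpha\!\bigl(\Xi_{i=1}^d b_k^{(i)}\bigr) - b_k^{(i_k)} - b_{k+1}^{(i_k)} = \alpha\gamma^k - \gamma^{k-d+1} - \gamma^{k+1} = \gamma^k\bigl(\alpha - \gamma - \gamma^{-(d-1)}\bigr).
\]
Because $\gamma^k \to \infty$ and the bracketed constant is independent of $k$, the $\liminf$ equals $-\infty$, $0$, or $+\infty$ according as $\alpha - \gamma - \gamma^{-(d-1)}$ is negative, zero, or positive. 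This is exactly the trichotomy stated in the lemma.

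There is no substantive obstacle here; the only mild point is keeping the cyclic indexing straight, which is immediate from periodicity. I note in passing that minimizing $\gamma \mapsto \gamma + \gamma^{-(d-1)}$ over $\gamma > 1$ yields $\gamma = \gamma_d = (d-1)^{1/d}$ with minimum value $d/(d-1)^{(d-1)/d}$, recovering precisely the value of $\omega_d(\Hmax)$ claimed in \eqref{exponents1}; this explains why periodic geometric data progressions will furnish the ``worst-case scenarios'' in the fuller analysis of Section \ref{sectionHmax}.
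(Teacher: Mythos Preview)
Your proof is correct and follows essentially the same direct computation as the paper: identify $\{b_k^{(i)}\} = \{A_{k-d+1},\ldots,A_k\}$, $b_k^{(i_k)} = A_{k-d+1}$, $b_{k+1}^{(i_k)} = A_{k+1}$, substitute $A_k = \gamma^k$, and factor out $\gamma^k$ to obtain the trichotomy. The only differences are cosmetic---you spell out the cyclic indexing and verify (I), (II) explicitly, and you append the minimization remark (which the paper defers to the next lemma).
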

\begin{proof}
Since $\Delta$ is periodic, we have $\{b_k^{(i)} : i = 1,\ldots,d\} = \{A_{k - j} : j = 0,\ldots,d - 1\}$, $b_k^{(i_k)} = A_{k - d + 1}$, and $b_{k + 1}^{(i_k)} = A_{k + 1}$. Thus
\begin{align*}
C_{\tmax,\Psi_\alpha}(\Delta) &= \liminf_{k\to\infty}\left(\alpha\max_{j = 0}^{d - 1} A_{k - j} - A_{k - d + 1} - A_{k + 1}\right)\\
&= \liminf_{k\to\infty}\left(\alpha\gamma^k - \gamma^{k - d + 1} - \gamma^{k + 1}\right)\\
&= \liminf_{k\to\infty}\left(\alpha - \gamma^{-(d - 1)} - \gamma\right)\gamma^k.
\end{align*}
Since $\gamma^k\to\infty$, this completes the proof.
\end{proof}
Fix $\alpha\geq 0$. From Lemma \ref{lemmacondition}, we know that $\psi_\alpha$ is Dirichlet on $\R^d$ with respect to $\Hmax$ if and only if $C_{\tmax,\Psi_\alpha}(\Delta) < \infty$ for every $d$-dimensional data progression $\Delta$. Now comes the heuristic part: let's figure out what happens if we consider only periodic geometric data progressions, rather than all data progressions.

\begin{proposition}
\label{propositioninterlude1}
The following are equivalent:
\begin{itemize}
\item[(A)] $C_{\tmax,\Psi_\alpha}(\Delta) < \infty$ for every periodic geometric $d$-dimensional data progression $\Delta$.
\item[(B)] $\alpha \leq \alpha_d := d(d - 1)^{-(d - 1)/d}$.
\end{itemize}
\end{proposition}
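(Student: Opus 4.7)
The plan is to combine Lemma~\ref{lemmaperiodicgeometric} with a one-variable calculus computation.

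First I would reduce (A) to an inequality about a single parameter. For any $\gamma > 1$ one easily exhibits a periodic geometric $d$-dimensional data progression of multiplier $\gamma$: take $A_k = \gamma^k$ and $i_k = ((k-1)\bmod d) + 1$, and note that (I) holds because $A_{k+1} = \gamma^{k+1} > \gamma^{k-d+1} = b_k^{(i_k)}$, while (II) holds because $\max(\Delta_k) = \gamma^k \to \infty$. By Lemma~\ref{lemmaperiodicgeometric} we have $C_{\tmax,\Psi_\alpha}(\Delta) < \infty$ for this $\Delta$ iff $\gamma + \gamma^{-(d-1)} \geq \alpha$. Since the conclusion of Lemma~\ref{lemmaperiodicgeometric} depends on $\Delta$ only through $\gamma$, (A) is equivalent to
\[
\alpha \;\leq\; \inf_{\gamma > 1}\bigl(\gamma + \gamma^{-(d-1)}\bigr).
\]

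Next I would compute this infimum. Setting $f(\gamma) = \gamma + \gamma^{-(d-1)}$, one has $f'(\gamma) = 1 - (d-1)\gamma^{-d}$ and $f''(\gamma) = d(d-1)\gamma^{-d-1} > 0$, so $f$ is strictly convex on $(0,\infty)$ with a unique critical point at $\gamma = (d-1)^{1/d} = \gamma_d$. For $d \geq 3$ one has $\gamma_d > 1$, the minimum is attained inside the domain, and the identity
\[
f(\gamma_d) \;=\; (d-1)^{1/d} + (d-1)^{-(d-1)/d} \;=\; (d-1)^{-(d-1)/d}\bigl((d-1) + 1\bigr) \;=\; d(d-1)^{-(d-1)/d} \;=\; \alpha_d
\]
gives the desired value. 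For $d = 2$ one has $\gamma_d = 1$, so $f$ is strictly increasing on $(1,\infty)$ and $\inf_{\gamma > 1} f(\gamma) = \lim_{\gamma \to 1^+} f(\gamma) = 2 = \alpha_2$ (the infimum is not attained, but this does not matter since the condition is a non-strict inequality). In either case the equivalence (A) $\iff$ $\alpha \leq \alpha_d$ follows.

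There is no real obstacle here; the lemma does all the heavy lifting, and what remains is a textbook minimization. The only mildly subtle point is the $d = 2$ boundary case, where one must observe that although the infimum is not attained, $\alpha = 2$ still satisfies $\gamma + 1/\gamma > 2$ for every $\gamma > 1$ (by AM--GM), so (A) holds, whereas any $\alpha > 2$ is violated by choosing $\gamma$ sufficiently close to $1$.
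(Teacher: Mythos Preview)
Your proof is correct and follows essentially the same route as the paper: reduce (A) via Lemma~\ref{lemmaperiodicgeometric} to the inequality $\alpha \leq \inf_{\gamma>1}\bigl(\gamma+\gamma^{-(d-1)}\bigr)$, then minimize $f(\gamma)=\gamma+\gamma^{-(d-1)}$ by elementary calculus, handling the boundary case $d=2$ separately. The paper states the minimization as Lemma~\ref{lemmainterlude1} and leaves it as an exercise, whereas you carry it out explicitly.
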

In light of Lemma \ref{lemmaperiodicgeometric}, it suffices to prove the following:

\begin{lemma}
\label{lemmainterlude1}
The unique minimum of the function
\[
f(\gamma) = \gamma + \gamma^{-(d - 1)}
\]
is attained at the value $\gamma_d = (d - 1)^{1/d}$, where it achieves the value $f(\gamma_d) = \alpha_d$.
\end{lemma}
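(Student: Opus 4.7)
The plan is to treat this as a straightforward single-variable calculus problem on $(0,\infty)$, since $f$ is smooth there and the two terms $\gamma$ and $\gamma^{-(d-1)}$ both diverge at the appropriate boundary ($\gamma\to\infty$ and $\gamma\to 0^+$ respectively, noting $d\geq 2$), so a unique interior minimum must exist.

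First I would differentiate: $f'(\gamma) = 1 - (d-1)\gamma^{-d}$. Setting $f'(\gamma) = 0$ gives $\gamma^d = d-1$, i.e.\ the unique positive critical point is $\gamma_d = (d-1)^{1/d}$. To confirm this is a global minimum, I would compute
\[
f''(\gamma) = d(d-1)\gamma^{-(d+1)} > 0 \quad \text{for all } \gamma > 0,
\]
so $f$ is strictly convex on $(0,\infty)$, and therefore $\gamma_d$ is the unique minimizer.

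Finally, I would evaluate $f$ at $\gamma_d$. Factoring out $(d-1)^{-(d-1)/d}$ gives
\[
f(\gamma_d) = (d-1)^{1/d} + (d-1)^{-(d-1)/d}
= (d-1)^{-(d-1)/d}\bigl[(d-1)^{1/d + (d-1)/d} + 1\bigr]
= (d-1)^{-(d-1)/d}\bigl[(d-1) + 1\bigr]
= d\,(d-1)^{-(d-1)/d} = \alpha_d,
\]
as required. There is no substantive obstacle here; the only thing to be slightly careful about is the degenerate case $d=1$, but the surrounding discussion already restricts attention to $d\geq 2$ (indeed the interlude is motivating the $d\geq 3$ formula for $\omega_d(\Hmax)$), so this is not an issue.
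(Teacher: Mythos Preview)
Your argument is correct; this is exactly the routine calculus computation the paper has in mind, and indeed the paper does not supply a proof at all but simply states that ``the proof of this lemma is a calculus exercise which is left to the reader.'' There is nothing to add.
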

The proof of this lemma is a calculus exercise which is left to the reader.

Note that $\gamma_d > 1$ if and only if $d\geq 3$. If $d = 2$, we still have $\sup_{\gamma > 1}f(\gamma) = \alpha_d$ which is sufficient to deduce Proposition \ref{propositioninterlude1} from Lemma \ref{lemmaperiodicgeometric}.

In the sequel, the following corollary will be useful:

\begin{corollary}
\label{corollarymaximum}
The unique maximum of the function
\[
f_d(\gamma) = (\alpha_d - \gamma)\gamma^{d - 1}
\]
is attained at the value $\gamma_d$, where it acheives the value $f_d(\gamma_d) = 1$.
\end{corollary}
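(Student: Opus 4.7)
The plan is to deduce Corollary \ref{corollarymaximum} essentially for free from Lemma \ref{lemmainterlude1}, without redoing the calculus. The key observation is that the two statements are related by a trivial algebraic rearrangement: the inequality $\gamma + \gamma^{-(d-1)} \geq \alpha_d$ (with equality iff $\gamma = \gamma_d$) is equivalent, after subtracting $\gamma$ and multiplying by $\gamma^{d-1} > 0$, to $(\alpha_d - \gamma)\gamma^{d-1} \leq 1$ (with equality iff $\gamma = \gamma_d$).

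Concretely, I would write: by Lemma \ref{lemmainterlude1}, for every $\gamma > 0$ we have $\gamma + \gamma^{-(d-1)} \geq \alpha_d$, with equality if and only if $\gamma = \gamma_d$. Rearranging gives $\alpha_d - \gamma \leq \gamma^{-(d-1)}$, and multiplying through by $\gamma^{d-1}$ (which is positive) yields $f_d(\gamma) = (\alpha_d - \gamma)\gamma^{d-1} \leq 1$, with equality exactly at $\gamma = \gamma_d$. This simultaneously establishes that $\gamma_d$ is a maximizer, that the maximum value is $1$, and that the maximizer is unique.

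As a sanity check one can verify the value directly: $\gamma_d^{d-1} = (d-1)^{(d-1)/d}$, so $\alpha_d = d\,\gamma_d^{-(d-1)}$ and hence $(\alpha_d - \gamma_d)\gamma_d^{d-1} = d - \gamma_d^d = d - (d-1) = 1$. I would include this one-line verification for clarity, but it is logically redundant given the rearrangement above.

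There is essentially no obstacle here; the only thing to be careful about is noting that multiplication by $\gamma^{d-1}$ preserves the inequality (since we are maximizing over $\gamma > 0$ — implicitly from the setup, as $\gamma$ is the multiplier of a geometric data progression and thus satisfies $\gamma > 1 > 0$). The entire proof should fit in two or three lines.
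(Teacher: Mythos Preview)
Your proposal is correct and follows exactly the paper's approach: the paper's proof simply states the inequality $\gamma + \gamma^{-(d-1)} \geq \alpha_d$ from Lemma \ref{lemmainterlude1} (with equality iff $\gamma = \gamma_d$) and says ``rearranging gives the desired result.'' Your writeup is more detailed than the paper's one-line version, but the argument is identical.
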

\begin{proof}
We have
\[
\gamma + \gamma^{-(d - 1)} \geq \alpha_d,
\]
with equality if and only if $\gamma = \gamma_d$; rearranging gives the desired result.
\end{proof}

\draftnewpage
\section{The class of recursively integrable functions}
\label{sectionrecint}

In this section we introduce a class of functions to be used in the proof of Theorem \ref{theoremHmax}, the class of \emph{recursively integrable} functions.

\begin{definition}
\label{definitionrecint}
Fix $t_0 \geq 0$, and let $f:\tplus \to \Rplus$ be a continuous function. We say that $f$ is \emph{recursively integrable} if for some $t_1 \geq t_0$ the differential equation
\begin{equation}
\label{recint}
-g'(x) = g^2(x) + f(x)
\end{equation}
has a solution $g: \Tplus\to\Rplus$. The class of recursively integrable functions will be denoted $\RR$. A solution $g$ of \eqref{recint} will be called a \emph{recursive antiderivative} of $f$ (regardless of its domain and range).
\end{definition}

Note that if $f\in\RR$, then $f$ is integrable, since
\[
\int_{t_1}^\infty f(x)\dee x \leq \int_{t_1}^\infty [g^2(x) + f(x)]\dee x = -\int_{t_1}^\infty g'(x)\dee x = g(t_1) - \lim_{t\to\infty} g(t) \leq g(t_1) < \infty.
\]
Like the class of integrable functions, the class $\RR$ is closed under $\leq$:

\begin{lemma}
\label{lemmacomparison}
If $0\leq f_1 \leq f_2$ and if $f_2\in\RR$, then $f_1\in\RR$.
\end{lemma}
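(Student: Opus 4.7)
The plan is to construct a recursive antiderivative of $f_1$ starting from a given recursive antiderivative $g_2:[t_1,\infty)\to[0,\infty)$ of $f_2$. Consider the initial value problem $-g_1'=g_1^2+f_1$ with $g_1(t_1)=g_2(t_1)$; since $f_1$ is continuous and the right-hand side is locally Lipschitz in the unknown, Picard--Lindel\"of gives a unique maximal solution on some interval $[t_1,T)$. The two tasks are then to show that $g_1\geq 0$ throughout and that $T=\infty$.

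For nonnegativity, I would establish the pointwise comparison $g_1\geq g_2$ on $[t_1,T)$. Setting $h:=g_1-g_2$, the crucial observation is that the nonlinearity factors as $g_1^2-g_2^2=(g_1+g_2)h$, so $h$ satisfies the \emph{linear} equation $h'+(g_1+g_2)h=f_2-f_1$ with $h(t_1)=0$. Since $f_2-f_1\geq 0$ and $g_1+g_2\geq 0$, the integrating-factor formula yields $h\geq 0$ on $[t_1,T)$, so $g_1\geq g_2\geq 0$ there.

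For global existence, note that $g_1'=-(g_1^2+f_1)\leq 0$, so $g_1$ is nonincreasing and in particular $g_1\leq g_1(t_1)$ on $[t_1,T)$. Combined with $g_1\geq 0$, this confines $g_1$ to the compact interval $[0,g_1(t_1)]$. A solution of a locally Lipschitz ODE that stays in a bounded set cannot fail to extend, so $T=\infty$, and the resulting $g_1:[t_1,\infty)\to[0,\infty)$ is a recursive antiderivative of $f_1$, proving $f_1\in\RR$.

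The only real subtlety is the comparison step, but it works out cleanly because the factorization $g_1^2-g_2^2=(g_1+g_2)(g_1-g_2)$ turns the difference equation into a linear first-order ODE for $h$ with nonnegative forcing and zero initial value. Everything else reduces to elementary ODE facts, and no property of $g_2$ beyond its nonnegativity and its existence on $[t_1,\infty)$ is needed.
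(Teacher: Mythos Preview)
Your proof is correct and follows essentially the same approach as the paper's: solve the ODE for $g_1$ with initial value $g_2(t_1)$, establish the comparison $g_1\ge g_2\ge 0$, and use monotonicity to rule out finite-time blow-up. One small clean-up: in the comparison step the hypothesis $g_1+g_2\ge 0$ is both unnecessary and not yet established at that point---the integrating-factor formula already yields $h\ge 0$ from $f_2-f_1\ge 0$ and $h(t_1)=0$ alone, since $\exp\bigl(\int_{t_1}^x (g_1+g_2)\bigr)>0$ regardless of the sign of $g_1+g_2$.
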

\begin{proof}
Let $g_2:\Tplus\to\Rplus$ be a recursive antiderivative of $f_2$. Let $g_1:[t_1,t_2)\to\R$ be a recursive antiderivative of $f_1$ satisfying $g_1(t_1) = g_2(t_1)$. Such a function $g_1$ exists by the fundamental theorem of ordinary differential equations; moreover, $t_2$ may be chosen so that either $t_2 = \infty$ or $\lim_{t\to t_2} g_1(t) = \pm\infty$. It is clear that $g_1 \geq g_2$. In particular $g_1 \geq 0$. On the other hand, $g_1$ is decreasing so $\lim_{t\to t_2} g_1(t)\neq +\infty$. Thus $t_2 = \infty$ and $g_1:\Tplus\to\Rplus$.
\end{proof}

\begin{remark}
\label{remarkcomparison}
Equivalently, Lemma \ref{lemmacomparison} says that if the differential inequality
\[
-g'(x) \geq g^2(x) + f(x)
\]
has a solution $g:\Tplus\to\Rplus$, then $f$ is recursively integrable.
\end{remark}

%\begin{remark*}
%Note that although we have defined what it means for a given function $f$ to be recursively integrable, we have not defined a ``recursive integral'' analogous to the recursive sum of Definition \ref{definitionrecint}. This is because for some recursively integrable functions $f$, for example the characteristic function of $[0,\pi/2]$,\Footnote{We ignore the minor technicality that this function is not continuous.} there is no recursive antiderivative $g:\Rplus\to\Rplus$. (In other words, the value $t_0$ in Definition \ref{definitionrecint} must be strictly greater than zero.)
%\end{remark*}

%We may now state our comparison test:
%
%\begin{proposition}
%If $f\in\HH$ is nonnegative, then the recursive series $\sum_{n = 1}^\infty \rec f(n)$ converges if and only if $f\in\RR$.
%\end{proposition}
%Its proof, however, must be delayed until we have more tools to determine when a function is recursively integrable.

%\begin{proof}
%(To be added\internal)
%Reverse induction on $i$ shows that for all $0\leq i \leq N$,
%\[
%\sum_{n = i + 1}^N \rec f(n) \leq \int_i^N \rec f(x)\dee x \leq \sum_{n = i}^N \rec f(n).
%\]
%Setting $i = 0$ and taking the limit as $N$ approaches infinity yields the desired result.
%\end{proof}

\ignore{
\begin{proposition}
\label{propositiondiffeq}
The recursive integral $\int_0^\infty \rec f(x)\dee x$ converges if and only if there exists a function $g:\Rplus\to\Rplus$ satisfying \eqref{recint}.
\end{proposition}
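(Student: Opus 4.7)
The plan is to prove both directions by directly analyzing the Riccati-type equation \eqref{recint}, leveraging its monotonicity structure.

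For the ``if'' direction, I would start by supposing $g:\Rplus\to\Rplus$ solves \eqref{recint}. Since $-g'(x) = g^2(x) + f(x) \geq 0$, the function $g$ is nonincreasing; being also nonnegative, it has a finite limit $L \geq 0$ at infinity, which must equal $0$ (otherwise $-g' \geq L^2$ on $\Rplus$ would force $g$ eventually negative). Integrating \eqref{recint} from $0$ to $T$ gives
\[
g(0) - g(T) = \int_0^T g^2(x)\,\dee x + \int_0^T f(x)\,\dee x,
\]
and letting $T\to\infty$ shows both integrals converge. Thus the recursive integral of $f$ converges, with value at most $g(0)$. This parallels the integrability observation already used in the paragraph following Definition \ref{definitionrecint}.

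For the converse ``only if'' direction, my approach would be a shooting / monotone-limit construction. For each $T > 0$, let $g_T:[0,T]\to\Rplus$ denote the unique solution of \eqref{recint} with terminal condition $g_T(T)=0$; such a backward solution exists by standard ODE theory, and is nonnegative because it is nonincreasing and vanishes at its right endpoint. A comparison argument (analogous to the one used in the proof of Lemma \ref{lemmacomparison}) shows that the family $T \mapsto g_T(x)$ is monotone increasing in $T$ for each fixed $x$. Granted a pointwise upper bound $g_T(x) \leq M(x) < \infty$ independent of $T$, the monotone limit $g(x) := \lim_{T\to\infty} g_T(x)$ is finite on all of $\Rplus$, and passing to the limit in the integrated form of \eqref{recint} yields a nonnegative solution of \eqref{recint} on $\Rplus$.

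The main obstacle is establishing this uniform bound on $g_T$, and this is exactly where the convergence hypothesis must be used. Integrating \eqref{recint} backwards from $T$ yields the fixed-point relation
\[
g_T(x) = \int_x^T \bigl[g_T^2(t) + f(t)\bigr]\,\dee t,
\]
and iterating this identity expresses $g_T(x)$ as a nested series of integrals in $f$. Translating the convergence of $\int_0^\infty \rec f(x)\,\dee x$ into a uniform-in-$T$ control of this nested expression is the technical core of the proof; the bound must survive the limit $T\to\infty$ so as to produce a positive, decreasing, integrable solution $g$ on $\Rplus$. Once that estimate is in hand the monotone-limit step is routine, and the two directions together deliver the equivalence.
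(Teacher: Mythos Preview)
Your backward-shooting family $g_T$ with terminal condition $g_T(T)=0$ is exactly the family $g_N$ the paper introduces, and the monotonicity in $T$ you invoke is the same comparison principle. What you are missing is the one fact that organizes the whole argument: in the paper's setup the recursive integral is \emph{characterized} by $g_N(0)\uparrow \int_0^\infty\rec f(x)\,\dee x$ (with the convention that the integral is infinite if some $g_N$ fails to extend down to $0$). Once this is in hand, the ``main obstacle'' you identify evaporates: the convergence hypothesis literally says $\sup_N g_N(0)<\infty$, so no nested-series estimate is needed.

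This also repairs your ``if'' direction, which as written has a gap. Your integration argument shows $\int_0^\infty f<\infty$ and $\int_0^\infty g^2<\infty$, but neither of these is the recursive integral. The paper instead uses the comparison $g_N\le g$ (the same mechanism as in Lemma~\ref{lemmacomparison}) to conclude directly that $\int_0^\infty\rec f=\lim_N g_N(0)\le g(0)<\infty$.

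For the ``only if'' direction the paper takes a slightly different tack from your monotone limit: rather than passing to $\lim_T g_T$, it fixes any $t\ge\int_0^\infty\rec f$, solves \eqref{recint} \emph{forward} with initial value $g(0)=t$, and uses $g\ge g_N$ on $[0,N]$ to see that $g\ge 0$ on every $[0,N]$, hence on all of $\Rplus$. Your monotone-limit construction would also succeed once the uniform bound is seen to be tautological, but the forward-shooting variant sidesteps the limit passage entirely and yields the additional observation (recorded in a remark) that one may take $g(0)=t$ for any $t\ge\int_0^\infty\rec f$.
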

\begin{proof}
For each $N$, let $g_N:(t_N,N]\to\Rplus$ denote the unique maximal solution of \eqref{recint} satisfying $g_N(N) = 0$. Then $g_N(0) \uparrow \int_0^\infty\rec f(x) \dee x$, unless $t_N > 0$ for some $N$, in which case $\int_0^\infty\rec f(x)\dee x = \infty$.

Suppose that the recursive integral converges. Fix $t\geq \int_0^\infty \rec f(x)\dee x$, and let $g:\Rplus\to\R$ be the unique solution of the differential equation \eqref{recint} satisfying $g(0) = t$. Given $N\in\N$, it is readily seen that $g_N\leq g$, and so in particular we have $g\geq 0$ on $[0,N]$. Since $N$ was arbitrary, we have $g:\Rplus\to\Rplus$.

%Then for all $x\in\Rplus$ and $N\in\N$ we have $g_N(x) \leq \int_0^\infty \rec f(x)\dee x$, so the sequence $(g_N)_1^\infty$ converges to a continuous function $g$. By converting \eqref{recint} into an integral equation it may be shown that its validity is preserved under uniform limits, so $g$ is also a solution of \eqref{recint}. But $g:\Rplus\to\Rplus$, completing the proof.

Conversely, suppose that $g:\Rplus\to\Rplus$ satisfies \eqref{recint}. It is readily seen that $g_N\leq g$ for all $N$, and in particular $\int_0^\infty \rec f(x)\dee x = \lim_{N\to\infty} g_N(0)\leq g(0)$.
\end{proof}

\begin{remark*}
Notice that the function $g$ of Proposition \ref{propositiondiffeq} does not necessarily satisfy $g(0) = \int_0^\infty \rec f(x)\dee x$; indeed, the above proof shows that it may satisfy $g(0) = t$ for any $t\geq \int_0^\infty \rec f(x)\dee x$.
\end{remark*}
}% end ignore

However, unlike the class of integrable functions, the class $\RR$ is not closed under scalar multiplication, Indeed, we have:

\begin{lemma}
\label{lemmarecint1}
Fix $C > 0$. The function $f(x) = C/x^2$ is recursively integrable if and only if $C\leq 1/4$.
\end{lemma}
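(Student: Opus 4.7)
The plan is to use an explicit ansatz for one direction and a clever substitution for the other. Both parts are driven by the observation that the ODE $-g' = g^2 + C/x^2$ is scale-invariant under $x \mapsto \lambda x$ (if $g$ is a solution on $[t_1,\infty)$, so is $x\mapsto \lambda g(\lambda x)$), which suggests looking for solutions of the form $g(x)=A/x$.

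For the ``if'' direction, I would simply substitute $g(x)=A/x$ into \eqref{recint}. Since $-g'(x)=A/x^2$ and $g(x)^2+f(x)=(A^2+C)/x^2$, the equation reduces to the quadratic $A^2-A+C=0$, whose discriminant is $1-4C$. When $C\leq 1/4$, taking $A=(1+\sqrt{1-4C})/2>0$ yields a bona fide recursive antiderivative $g:[1,\infty)\to[0,\infty)$, proving $f\in\RR$.

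For the ``only if'' direction, assume $C>1/4$ and, toward a contradiction, suppose $g:\Tplus\to\Rplus$ satisfies \eqref{recint}. The scale invariance suggests passing to the new variable $u(x):=xg(x)$, which lives in $[0,\infty)$. A direct computation gives
\[
u'(x)=g(x)+xg'(x)=\frac{u(x)}{x}-x\!\left(g(x)^2+\frac{C}{x^2}\right)=-\frac{u(x)^2-u(x)+C}{x}.
\]
Completing the square, $u^2-u+C=(u-1/2)^2+(C-1/4)\geq C-1/4>0$, so $u'(x)\leq -(C-1/4)/x$. Integrating from $t_1$ to $x$ yields $u(x)\leq u(t_1)-(C-1/4)\log(x/t_1)\to-\infty$, contradicting $u\geq 0$.

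The ``hard'' step is really just spotting the substitution $u=xg$; everything else is a one-line calculation. The only mild subtlety worth flagging is the boundary case $C=1/4$, which is handled uniformly by the ansatz ($A=1/2$), so no separate argument is needed. Note also that by Lemma \ref{lemmacomparison}, once the case $C=1/4$ is established, the ``if'' direction for all $C\leq 1/4$ follows for free; I would still present the explicit ansatz since it costs nothing and keeps the argument self-contained.
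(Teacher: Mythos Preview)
Your proposal is correct and essentially identical to the paper's proof: the paper also uses the ansatz $g(x)=c/x$ (solving $C=c-c^2$) for the ``if'' direction, and the substitution $h(x)=xg(x)$ together with the lower bound $h^2-h+C\geq\epsilon>0$ for the ``only if'' direction. The only cosmetic difference is that you name the explicit root $A=(1+\sqrt{1-4C})/2$ and the explicit bound $\epsilon=C-1/4$, whereas the paper merely asserts their existence.
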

\begin{proof}
Suppose that $C \leq 1/4$. Then there exists $c > 0$ such that $C = c - c^2$. The function $g(x) = c/x$ is a recursive antiderivative of $f$, and thus $f\in\RR$.

Conversely, suppose that $C > 1/4$, and by contradiction suppose that $g:\Tplus\to\Rplus$ is a recursive antiderivative of $f$. Letting $h(x) = xg(x)$, we have
\[
\frac{h(x)}{x^2} - \frac{h'(x)}{x} = \frac{h^2(x)}{x^2} + \frac{C}{x^2},
\]
or
\[
-h'(x) = \frac 1x \left[h^2(x) - h(x) + C\right].
\]
But since $C > 1/4$, there exists $\epsilon > 0$ such that $y^2 - y + C \geq \epsilon$ for all $y\in\R$. Thus
\[
-h'(x) \geq \frac{\epsilon}{x}\cdot
\]
It follows that $h(x)\to -\infty$ as $x\to\infty$, contradicting that $g:\tplus\to\Rplus$.
\end{proof}

If $f$ is a function such that the limit $\lim_{x\to\infty} x^2 f(x)$ exists and is not equal to $1/4$, then Lemmas \ref{lemmarecint1} and \ref{lemmacomparison} can be used to determine whether or not $f\in\RR$. This leads to the question: what if $\lim_{x\to\infty} x^2 f(x) = 1/4$? The following lemma provides us with a tool to deal with such functions:

\begin{lemma}
\label{lemmarecint2}
Let $f:\tplus\to\Rplus$. Then $f\in\RR$ if and only if $F\in\RR$, where
\begin{equation*}
\label{1x214}
F(x) := \frac 1{x^2}\left[\frac 14 + f(\log(x))\right].
\end{equation*}
\end{lemma}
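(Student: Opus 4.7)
The plan is to convert the ODE $-G' = G^2 + F$ in the variable $y$ into the ODE $-g' = g^2 + f$ in the variable $x = \log y$ via the substitution
\[
g(x) = e^x G(e^x) - \frac{1}{2}, \qquad \text{equivalently} \qquad G(y) = \frac{g(\log y) + 1/2}{y}\cdot
\]
The $1/2$ shift is dictated by the extra $1/(4x^2)$ summand inside $F$, and is precisely the critical constant appearing in Lemma \ref{lemmarecint1}: it is the (unique) value $c$ with $c - c^2 = 1/4$.

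First I would verify the formal equivalence of the two ODEs. Writing $h(x) = g(x) + 1/2 = e^x G(e^x)$ and $x = \log y$, the chain rule gives $G'(y) = (h'(x) - h(x))/y^2$. Substituting into $-G'(y) = G^2(y) + F(y)$ and multiplying through by $y^2$ yields $h(x) - h'(x) = h^2(x) + 1/4 + f(x)$, which rearranges to $-h'(x) = (h(x) - 1/2)^2 + f(x) = g^2(x) + f(x)$, exactly as wanted. So at the level of solutions of the ODEs, the substitution is a bijection.

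For the implication $f \in \RR \Rightarrow F \in \RR$: given a recursive antiderivative $g:\Tplus \to \Rplus$ of $f$, I would simply define $G(y) := (g(\log y) + 1/2)/y$ on $[e^{t_1},\infty)$. The calculation above makes $G$ a $C^1$ solution of the ODE for $F$, and $G(y) \geq 1/(2y) > 0$ is automatic, so $G$ witnesses $F \in \RR$.

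For the converse $F \in \RR \Rightarrow f \in \RR$: given a recursive antiderivative $G:[y_1,\infty) \to \Rplus$ of $F$, I would define $g(x) := e^x G(e^x) - 1/2$ on $[\log y_1,\infty)$. The ODE is then automatic from the formal computation, but \emph{positivity of $g$} is not, and this is the main obstacle. I would overcome it as follows: since $G \geq 0$ gives the lower bound $g(x) \geq -1/2$, and since $-g'(x) = g^2(x) + f(x) \geq 0$ makes $g$ decreasing, the limit $L := \lim_{x \to \infty} g(x) \geq -1/2$ exists. If $L < 0$, then for $x$ sufficiently large we would have $g^2(x) \geq L^2/4 > 0$, hence $-g'(x) \geq L^2/4$, forcing $g(x) \to -\infty$, contrary to the existence of $L$. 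Therefore $L \geq 0$, and by monotonicity $g(x) \geq L \geq 0$ on all of $[\log y_1,\infty)$, so $g$ is a genuine recursive antiderivative of $f$ and $f \in \RR$.
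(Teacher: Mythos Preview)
Your proof is correct and follows essentially the same route as the paper: the same substitution $G(y)=(g(\log y)+1/2)/y$, the same formal ODE equivalence, and the same monotonicity argument for nonnegativity of $g$ in the converse direction. The only cosmetic difference is that the paper actually pins down $\lim_{x\to\infty}g(x)=0$ (via integrability of $g^2$), whereas you stop at $L\geq 0$; both suffice.
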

\begin{proof}
For any function $g:\Tplus\to\Rplus$, let
\begin{equation}
\label{Gdef}
G(x) := \frac 1x \left[\frac 12 + g(\log(x))\right].
\end{equation}
We have
\begin{align*}
&& -G'(x) &= G^2(x) + F(x)\\
&\Leftrightarrow& xG(x) - x(\dee/\dee x)[xG(x)] &= (xG(x))^2 + x^2 F(x)\\
&\Leftrightarrow& -x(\dee/\dee x)[xG(x)] &= (xG(x) - 1/2)^2 + x^2 F(x) - 1/4\\
&\Leftrightarrow& -g'(\log(x)) &= g^2(\log(x)) + f(\log(x)),
\end{align*}
i.e. $G$ is a recursive antiderivative of $F$ if and only if $g$ is a recursive antiderivative of $f$.

If $g:\Tplus\to\Rplus$ is a recursive antiderivative of $f$, let $G$ be defined by \eqref{Gdef}. Since $G:[e^{t_1},\infty)\to\Rplus$, $F$ is recursively integrable.

Conversely, suppose that $G:\Tplus\to\Rplus$ is a recursive antiderivative of $F$, with $t_1 > 0$. Let $g:[\log(t_1),\infty)\to[-1/2,\infty)$ be defined by \eqref{Gdef}; then $g$ is a recursive antiderivative of $f$. To complete the proof we must show that $g$ is nonnegative. But \eqref{recint} together with the inequality $f\geq 0$ show that
\begin{equation}
\label{gbounds}
-g'(x) \geq g^2(x) \geq 0.
\end{equation}
In particular $g$ is decreasing. Since $g$ is bounded from below, it follows that $\lim_{x\to\infty} g(x)$ exists. Applying \eqref{gbounds} again, we see that this limit must equal $0$. Since $g$ is decreasing, this implies that $g(x)\geq 0$ for all $x$.
\end{proof}

\begin{remark*}
An alternative proof of Lemma \ref{lemmarecint1} may be given by applying Lemma \ref{lemmarecint2} to the class of constant functions.
\end{remark*}

Applying Lemma \ref{lemmarecint2} repeatedly to Lemma \ref{lemmarecint1} yields the following:

\begin{corollary}
\label{corollaryrecint}
For each $N\geq -1$ and $C\geq 0$, the function
\begin{align*}
f_{N,C}(x) &= \frac14\sum_{n = 0}^N \prod_{i = 0}^n \left(\frac{1}{\log^{(i)}(x)}\right)^2 + C\prod_{i = 0}^{N + 1}\left(\frac{1}{\log^{(i)}(x)}\right)^2\\
&= \frac1{x^2}\left[\frac14 + \frac{1}{\log^2(x)}\left[\frac14 + \cdots + \left(\frac{1}{\log^{(N)}(x)}\right)^2\left[\frac14 + C\left(\frac{1}{\log^{(N + 1)}(x)}\right)^2\right]\cdots\right]\right]
\end{align*}
is recursively integrable if and only if $C \leq 1/4$. (If $N = -1$, then the first summation is equal to $0$ by convention.)
\end{corollary}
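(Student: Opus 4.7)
The plan is to prove the corollary by induction on $N\geq -1$, with the base case $N=-1$ supplied directly by Lemma \ref{lemmarecint1} and the inductive step supplied by Lemma \ref{lemmarecint2}.

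For the base case $N = -1$, the summation is empty by convention and $\log^{(0)}(x) = x$, so $f_{-1,C}(x) = C/x^2$. Lemma \ref{lemmarecint1} says precisely that this function is recursively integrable if and only if $C \leq 1/4$, so the base case is immediate.

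For the inductive step, I would assume the result holds for $N-1$ and then verify the identity
\[
f_{N,C}(x) = \frac{1}{x^2}\left[\frac14 + f_{N-1,C}(\log x)\right].
\]
This is essentially a bookkeeping computation: on the right-hand side, the substitution $y = \log x$ shifts every $\log^{(i)}$ to $\log^{(i+1)}$, and the extra $1/x^2$ prepended to the bracket provides the outermost factor in the nested expression for $f_{N,C}$ given in the statement. Combined with the leading $1/(4x^2)$, this reproduces exactly the definition of $f_{N,C}$. Once this identity is in hand, Lemma \ref{lemmarecint2} (applied with the inner function being $f_{N-1,C}$) gives
\[
f_{N,C} \in \RR \iff f_{N-1,C} \in \RR,
\]
and the inductive hypothesis completes the step.

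There is no real obstacle here; the only thing to be careful about is the indexing of $\log^{(i)}$ and the convention $\log^{(0)}(x) = x$, together with the two conventions (for $N=-1$, empty sum; for the nested form, the innermost bracket terminating at the $\log^{(N+1)}$ term). Writing out the expressions for $N = -1$ and $N = 0$ explicitly confirms that Lemma \ref{lemmarecint2} transports one to the other, after which the general inductive step is just the same pattern with more logarithms. Thus the corollary follows by a straightforward induction.
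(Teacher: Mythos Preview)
Your proposal is correct and is exactly the argument the paper has in mind: the paper's entire proof is the one-line remark ``Applying Lemma \ref{lemmarecint2} repeatedly to Lemma \ref{lemmarecint1} yields the following,'' and your induction on $N$ with the identity $f_{N,C}(x) = x^{-2}\bigl[\tfrac14 + f_{N-1,C}(\log x)\bigr]$ is precisely what this repeated application amounts to.
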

\begin{remark*}
There is a resemblance between Corollary \ref{corollaryrecint} and the following well-known theorem: For each $N\geq -1$ and $\alpha\geq 0$, the function
\begin{align*}
f(x) &= \left(\prod_{i = 0}^N \frac{1}{\log^{(i)}(x)}\right)\left(\frac{1}{\log^{(N + 1)}(x)}\right)^\alpha\\
&= \frac{1}{x\log(x)\cdots \log^{(N)}(x) \left(\log^{(N + 1)}(x)\right)^\alpha}
\end{align*}
is integrable on an interval of the form $\tplus$ if and only if $\alpha > 1$.
\end{remark*}

We next show that Corollary \ref{corollaryrecint} can be used to determine whether or not $f\in\RR$ whenever $f$ is a Hardy $L$-function.

\begin{proposition}
\label{propositionLfunctions}
If $f$ is a Hardy $L$-function, then there exist $N\in\N$ and $C \geq 0$ such that
\begin{equation}
\label{Lfunctions1}
f(x) \leq f_{N,C}(x) \text{ for all $x$ sufficiently large if $C\leq 1/4$}
\end{equation}
and
\begin{equation}
\label{Lfunctions2}
f(x) \geq f_{N,C}(x) \text{ for all $x$ sufficiently large if $C > 1/4$}.
\end{equation}
We have $f\in\RR$ or $f\notin\RR$ according to whether the former or the latter holds.
\end{proposition}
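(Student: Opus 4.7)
I would approach the proof by reducing both asymptotic inequalities to conditions on a single sequence of auxiliary Hardy $L$-functions, and then applying the Hardy field trichotomy (every Hardy $L$-function has a limit in $[-\infty,+\infty]$ at $\infty$, and any two are eventually comparable).

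Concretely, set $h_0(x) := x^2 f(x)$ and, recursively for $n \geq 0$,
\[
h_{n+1}(x) := \bigl(\log^{(n+1)}(x)\bigr)^2 \cdot (h_n(x) - 1/4).
\]
Each $h_n$ is obtained from $f$ by finitely many Hardy-field operations, hence is itself a Hardy $L$-function. A straightforward induction on $N$, unwinding the nested definition of $f_{N,C}$, yields the equivalences
\[
f(x) \leq f_{N,C}(x) \text{ eventually} \iff h_{N+1}(x) \leq C \text{ eventually},
\]
and the analogous equivalence with $\leq$ replaced by $\geq$, valid for every $N \geq -1$ and $C \geq 0$.

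Since each $h_n$ is a Hardy $L$-function, it has a well-defined limit $L_n \in [-\infty,+\infty]$ as $x\to\infty$. If some $n\geq 0$ satisfies $L_n \neq 1/4$, I would take the smallest such $n$, set $N := n-1$, and choose $C$ strictly between $L_n$ and $1/4$ (adjusted in the obvious way at the endpoints when $L_n = \pm\infty$). Depending on whether $L_n < 1/4$ or $L_n > 1/4$, the resulting $C$ is $\leq 1/4$ or $>1/4$, and it gives, respectively, $f \leq f_{N,C}$ or $f \geq f_{N,C}$ eventually. In the first case Lemma~\ref{lemmacomparison} combined with Corollary~\ref{corollaryrecint} forces $f \in \RR$; in the second, the contrapositive of Lemma~\ref{lemmacomparison} (applied to the fact that $f_{N,C}\notin\RR$) forces $f \notin \RR$.

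The main obstacle is ruling out the pathological case $L_n = 1/4$ for every $n \geq 0$. This would require $h_n(x)-1/4 \sim 1/\bigl(4\,(\log^{(n+1)}(x))^2\bigr)$ at every level, so that $f$ would agree asymptotically, to all iterated-logarithmic orders, with the formally infinite nested expression obtained as a ``limit'' of $f_{N,1/4}$ as $N\to\infty$. No Hardy $L$-function can exhibit this behavior: each is built from finitely many applications of $+,-,\times,\div,\exp,\log$ and real constants, and therefore admits only a finite, truncated iterated-logarithmic asymptotic expansion. The cleanest way to make this rigorous is to invoke Hardy's classical structure theorem for logarithmico-exponential functions to attach to $f$ a finite ``logarithmic depth'' $D(f)$, and to show that the iteration $h_n \mapsto h_{n+1}$ can register the critical value $1/4$ at most $D(f)+O(1)$ times before exiting it. Once termination is established, the remaining verifications are the direct inductive unwinding of $f_{N,C}$ and the already-proven comparison Lemma~\ref{lemmacomparison}.
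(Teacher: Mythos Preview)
Your setup is correct, and your $h_{N+1}$ coincides (up to the factor $(\log^{(N+1)})^2/4$) with the auxiliary function the paper uses. The difference is that the paper does not iterate: it takes $N$ to be the \emph{order} of $f$ in Hardy's sense from the outset, forms
\[
g(x) = \prod_{i=0}^N \bigl(\log^{(i)}(x)\bigr)^2 \Bigl[4f(x) - \sum_{n=0}^N \prod_{i=0}^n \bigl(\log^{(i)}(x)\bigr)^{-2}\Bigr],
\]
observes that $g$ is a Hardy $L$-function of order at most $N$, and applies Hardy's comparison theorem for such functions once: either $g(x) \leq (\log^{(N)}(x))^{-\epsilon}$ eventually or $g(x) \geq \epsilon$ eventually, for some $\epsilon>0$. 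This immediately yields \eqref{Lfunctions1} with $C=1/4$ or \eqref{Lfunctions2} with $C=1/2$. That theorem is exactly the tool you reach for in handling your ``main obstacle'': your ``logarithmic depth $D(f)$'' is Hardy's order, and the termination of your sequence $(L_n)$ at step $N+1$ is a consequence of the same dichotomy. So the two arguments are essentially the same; the paper's version simply skips the iterative scaffolding and jumps directly to the terminal step, which is where all the content lies anyway.
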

The second assertion is of course a direct consequence of Corollary \ref{corollaryrecint} and Lemma \ref{lemmacomparison}.
\begin{proof}
Let $N$ be the \emph{order} of $f$ as defined in \cite[\64]{Hardy2}, and consider the function
\[
g(x) = \prod_{i = 0}^N\left(\log^{(i)}(x)\right)^2 \left[4f(x) - \sum_{n = 0}^N \prod_{i = 0}^n \left(\frac{1}{\log^{(i)}(x)}\right)^2\right].
\]
Note that for each $C \geq 0$, we have $f(x) \leq f_{N,C}(x)$ if and only if $g(x) \leq 4C(\log^{(n + 1)}(x))^{-2}$. On the other hand, it is readily seen that $g$ is a Hardy $L$-function of order $\leq N$. So by \cite[Theorem 3]{Hardy2}, there exists $\epsilon > 0$ such that either
\[
g(x) \leq \left(\log^{(N)}(x)\right)^{-\epsilon} \text{ for all $x$ sufficiently large},
\]
or
\[
g(x) \geq \epsilon \text{ for all $x$ sufficiently large}.
\]
In the first case, we have $g(x) \leq (\log^{(n + 1)}(x))^{-2}$ for all $x$ sufficiently large, so \eqref{Lfunctions1} holds with $C = 1/4$. In the second case, we have $g(x) \geq 2(\log^{(n + 1)}(x))^{-2}$ for all $x$ sufficiently large, so \eqref{Lfunctions2} holds with $C = 1/2$.
\end{proof}

%\begin{remark}
%\label{remarkLfunctions}
%The relation between the functions $f_{N,C}$ defined in Corollary \ref{corollaryrecint} and the functions $\psi_{N,C}$ defined in Theorem \ref{theoremHmax} is as follows:
%\[
%f_{N,C}(x) = \frac{2}{d\gamma_d \log^2(\gamma_d)} \left[\omega_d(\Hmax) + \frac{\log\psi_{N + 2,4C}(e^{e^x})}{e^x}\right].
%\]
%\end{remark}

One more fact about transformations preserving recursive integrability will turn out to be useful:

\begin{lemma}
\label{lemmarecint3}
Fix $\lambda > 0$. A function $f:\tplus\to\Rplus$ is recursively integrable if and only if the function
\[
f_\lambda(x) = \lambda^2 f(\lambda x)
\]
is recursively integrable.
\end{lemma}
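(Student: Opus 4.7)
The plan is to exhibit an explicit bijection between recursive antiderivatives of $f$ and those of $f_\lambda$ via the substitution $x \mapsto \lambda x$, with a compensating multiplicative factor of $\lambda$ on the antiderivative itself.

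More concretely, I would first observe that the equation \eqref{recint} is essentially scale-covariant: if $g:\Tplus\to\Rplus$ is a recursive antiderivative of $f$, then I would define
\[
h(x) = \lambda\, g(\lambda x),
\]
defined on $[t_1/\lambda,\infty)$. A one-line calculation gives
\[
-h'(x) = -\lambda^2 g'(\lambda x) = \lambda^2\bigl[g^2(\lambda x) + f(\lambda x)\bigr] = \bigl(\lambda g(\lambda x)\bigr)^2 + \lambda^2 f(\lambda x) = h^2(x) + f_\lambda(x),
\]
so $h$ is a recursive antiderivative of $f_\lambda$. Since $g$ takes values in $\Rplus$ and $\lambda > 0$, the function $h$ also takes values in $\Rplus$, so $f_\lambda \in \RR$.

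For the converse, the cleanest route is to notice the identity $(f_\lambda)_{1/\lambda} = f$, and then apply the forward direction already proved with $1/\lambda$ in place of $\lambda$ and $f_\lambda$ in place of $f$. This gives a symmetric argument and avoids repeating the computation.

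There is no real obstacle here; the only thing to be careful about is that the domain of $h$ is a tail interval of $\Rplus$ (which is all that the definition of $\RR$ requires) and that $h \geq 0$ follows automatically from $g \geq 0$.
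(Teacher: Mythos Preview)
Your proof is correct and follows exactly the same approach as the paper's: define $g_\lambda(x)=\lambda g(\lambda x)$ as a recursive antiderivative of $f_\lambda$, and obtain the converse from the identity $(f_\lambda)_{1/\lambda}=f$. The only difference is that you spell out the chain-rule computation and the domain/nonnegativity checks, which the paper leaves implicit.
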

\begin{proof}
If $g$ is a recursive antiderivative of $f$, then $g_\lambda(x) = \lambda g(\lambda x)$ is a recursive antiderivative of $f_\lambda$. Since $f = (f_\lambda)_{1/\lambda}$, the backwards direction follows from the forwards direction.
\end{proof}

We next discuss the robustness of the concept of recursive integrability. As we have seen, it is not preserved under scalar multiplication. In particular, the sum of two recursively integrable functions is not necessarily recursively integrable. However, there are certain functions which can be safely added to a recursively integrable function without affecting its recursive integrability.

In what follows, $\HH$ denotes a Hardy field (cf. Appendix \ref{sectionhardyfields}) which contains the exponential and logarithm functions and is closed under composition. For example, $\HH$ can be (and must contain) the class of Hardy $L$-functions described in the introduction.

\begin{definition}
A nonnegative function $f_2\in\HH$ is \emph{ignorable} if for every function $f_1\in\RR\cap\HH$, we have $f_1 + f_2\in\RR$.
\end{definition}
Note that the sum of any two ignorable functions is ignorable. Moreover, if $f_2$ is ignorable and $0\leq f_1\leq f_2$, then $f_1$ is ignorable (assuming $f_1\in\HH$). By Archimedes' principle, it follows that the class of ignorable functions is closed under (nonnegative) scalar multiplication.

\begin{lemma}
\label{lemmaignorable}
For every $\epsilon > 0$, the function $f_2(x) = 1/x^{2 + \epsilon}$ is ignorable.
\end{lemma}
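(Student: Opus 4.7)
The plan is to construct a recursive antiderivative of $f_1 + f_2$ by perturbing one of $f_1$. Given a recursive antiderivative $g_1$ of $f_1$, setting $g := g_1 + h$ and expanding \eqref{recint} reduces the problem (via Remark \ref{remarkcomparison}) to producing a nonnegative $h$ satisfying
\[
-h'(x) \geq 2 g_1(x) h(x) + h^2(x) + f_2(x).
\]
A natural ansatz is $h(x) = A x^{-(1 + \epsilon/2)}$ for a suitable $A > 0$: then $-h'(x) = A(1+\epsilon/2) x^{-(2+\epsilon/2)}$, while $h^2$ and $f_2$ both decay like $x^{-(2+\epsilon)}$ and are therefore absorbable. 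The delicate balance is against the term $2 g_1 h$; for the inequality to hold we need the sharp bound $x g_1(x) \leq 1/2 + o(1)$, rather than the crude $xg_1 \leq 1 + o(1)$ that follows from $-g_1' \geq g_1^2$ alone.

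To obtain the sharp bound I would invoke the Hardy-field hypothesis: since $f_1 \in \HH$, the limit $L := \lim_{x \to \infty} x^2 f_1(x)$ exists in $[0,\infty]$, and $f_1 \in \RR$ forces $L \leq 1/4$ by Lemmas \ref{lemmarecint1} and \ref{lemmacomparison}. When $L < 1/4$ we have $f_1(x) + f_2(x) \leq C/x^2$ with $C < 1/4$ for $x$ large, and Lemmas \ref{lemmarecint1} and \ref{lemmacomparison} immediately give $f_1 + f_2 \in \RR$. When $L = 1/4$, the Hardy-field dichotomy applied to $f_1 - 1/(4x^2) \in \HH$ produces two subcases. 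In the subcase $f_1 \leq 1/(4x^2)$ eventually, the estimate $1/x^\epsilon \leq 1/(16\log^2 x)$ for $x$ large yields $f_1 + f_2 \leq 1/(4x^2) + 1/(16 x^2 \log^2 x) = f_{0,1/16}(x)$, which belongs to $\RR$ by Corollary \ref{corollaryrecint}, and Lemma \ref{lemmacomparison} concludes.

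The remaining subcase, $L = 1/4$ with $f_1 \geq 1/(4x^2)$ eventually, is the main obstacle. Here I pass to the minimal recursive antiderivative $g_1 := \lim_{T \to \infty} g_T$, where $g_T$ solves \eqref{recint} on $[t_1, T]$ with terminal value $g_T(T) = 0$; this limit is a recursive antiderivative of $f_1$ that is pointwise monotone in $f_1$, so comparison with the function $1/(4x^2)$ (whose minimal recursive antiderivative is $1/(2x)$) gives $g_1(x) \geq 1/(2x)$ for $x$ large. Setting $\phi(x) := x g_1(x) - 1/2 \geq 0$, a direct computation from \eqref{recint} yields
\[
x \phi'(x) = 1/4 - \phi^2(x) - x^2 f_1(x) \leq -\phi^2(x),
\]
so the Riccati substitution $u := 1/\phi$ gives $u(x) \geq u(x_0) + \log(x/x_0)$, whence $\phi(x) = O(1/\log x)$ and $xg_1(x) \to 1/2^+$. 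With this asymptotic in hand, for $x$ sufficiently large the ansatz $h(x) = A x^{-(1+\epsilon/2)}$ yields $2 g_1 h \leq A(1 + \epsilon/4) x^{-(2+\epsilon/2)}$, so that
\[
-h'(x) - 2 g_1(x) h(x) - h^2(x) - f_2(x) \geq \frac{A\epsilon/4}{x^{2+\epsilon/2}} - \frac{A^2+1}{x^{2+\epsilon}} > 0,
\]
establishing the required differential inequality and completing the construction. The hard part is precisely the Riccati extraction of $xg_1 \to 1/2$ in the critical subcase; everything else is routine comparison with functions already known to lie in $\RR$.
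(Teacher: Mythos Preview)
Your proof is correct and follows essentially the same strategy as the paper: split via the Hardy-field dichotomy at $1/(4x^2)$, dispose of the easy case by comparison, and in the hard case establish $xg_1(x)\to 1/2$ through the substitution $\phi(x)=xg_1(x)-1/2$ (which is exactly the paper's $g_3(\log x)$ from Lemma~\ref{lemmarecint2}), then add a power-law perturbation to $g_1$. Your detour through the \emph{minimal} recursive antiderivative is unnecessary, however: the same inequality $x\phi'\le -\phi^2$ already forces $\phi$ to be decreasing with limit $0$, hence $\phi\ge 0$, for \emph{any} recursive antiderivative $g_1$---this is how the paper proceeds, using the argument at the end of the proof of Lemma~\ref{lemmarecint2} and the perturbation $C/x^{1+\epsilon}$ in place of your $A/x^{1+\epsilon/2}$.
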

\begin{proof}
Fix $f_1\in\RR\cap\HH$, and let $g_1:\Tplus\to\Rplus$ be a recursive antiderivative of $f_1$. Fix $C > 1/\epsilon$, and let
\[
g(x) := g_1(x) + \frac{C}{x^{1 + \epsilon}}\cdot
\]
Then
\begin{align*}
-g'(x) - g^2(x)
&= -\left(g_1'(x) - \frac{C(1 + \epsilon)}{x^{2 + \epsilon}}\right) - \left(g_1^2(x) + \frac{2Cg_1(x)}{x^{1 + \epsilon}} + \frac{C^2}{x^{2 + 2\epsilon}}\right)\\
&= f_1(x) + \frac{C}{x^{2 + \epsilon}}\left[1 + \epsilon - 2x g_1(x) - \frac{C}{x^\epsilon}\right].
\end{align*}
Since $f_1\in\HH$, we have either
\begin{equation}
\label{f1leq}
f_1(x) \leq \frac{1}{4x^2} \text{ for all sufficiently large $x$},
\end{equation}
or
\begin{equation}
\label{f1geq}
f_1(x) \geq \frac{1}{4x^2} \text{ for all sufficiently large $x$}
\end{equation}
(Lemma \ref{lemmahardy}). If \eqref{f1leq} holds, then Lemmas \ref{lemmarecint1} and \ref{lemmarecint2} automatically show that $f_1 + f_2\in\RR$. So we suppose that \eqref{f1geq} holds. Let $f_3,g_3$ be defined by the equations
\begin{align*}
f_1(x) &= \frac{1}{x^2}\left[\frac14 + f_3(\log(x))\right]\\
g_1(x) &= \frac{1}{x}\left[\frac12 + g_3(\log(x))\right].
\end{align*}
Then $g_3:[e^{t_1},\infty) \to[-1/2,\infty)$ is a recursive antiderivative of $f_3$. But by \eqref{f1geq}, we have $f_3\geq 0$. By the argument used at the end of the proof of Lemma \ref{lemmarecint2}, the limit $\lim_{x\to\infty}g_3(x)$ exists and is equal to zero. Equivalently, this means that $xg_1(x)\to 1/2$ as $x\to\infty$. Thus
\[
\lim_{x\to\infty} \left[1 + \epsilon - 2x g_1(x) - \frac{C}{x^\epsilon}\right] = \epsilon.
\]
Since $C > 1/\epsilon$, this implies that
\[
-g'(x) - g^2(x) \geq f_1(x) + f_2(x) \text{ for all sufficiently large $x$}.
\]
By Remark \ref{remarkcomparison}, we have $f_1 + f_2\in\RR$.
\end{proof}
\begin{remark*}
Applying Lemma \ref{lemmarecint2} repeatedly shows that for all $N\geq -1$ and $\epsilon > 0$ the function
\begin{align*}
f(x) &= \left(\prod_{i = 0}^N \frac{1}{\log^{(i)}(x)}\right)^2\left(\frac{1}{\log^{(N + 1)}(x)}\right)^{2 + \epsilon}\\
&= \frac{1}{x^2\log^2(x)\cdots \left(\log^{(N)}(x)\right)^2 \left(\log^{(N + 1)}(x)\right)^{2 + \epsilon}}
\end{align*}
is ignorable.
\end{remark*}

We finish this section by providing a number of equivalent conditions to the recursive integrability of a function $f\in\HH$. The following proposition should be thought of as an analogue of the Integral Test which says that a increasing function $f:\Rplus\to\Rplus$ is integrable if and only if the series $\sum_{k = 1}^\infty f(k)$ is summable. It should be noted that as with the Integral Test, the motivation here is not to determine whether a function is recursively integrable by using an equivalent condition, but rather to determine whether one of the equivalent conditions is true by determining whether the function in question is recursively integrable.

\begin{proposition}
\label{propositionintegraltest}
Suppose $f\in\HH$ is nonnegative. Then for any $t\in\R$, the following are equivalent:
\begin{itemize}
\item[(A)] $f\in\RR$.
\item[(B1)] There exists a nonnegative sequence $(S_k)_{k\geq k_0}$ satisfying
\begin{equation}
\label{B1}
S_k - S_{k + 1} \geq S_{k + 1}^2 + f(k).
\end{equation}
\item[(B2)] There exists a nonnegative sequence $(S_k)_{k\geq k_0}$ satisfying
\begin{equation}
\label{B2}
S_k - S_{k + 1} = S_{k + 1}^2 + f(k).
\end{equation}
\item[(C1)] There exists a nonnegative sequence $(S_k)_{k\geq k_0}$ satisfying $S_k\to 0$ and
\begin{equation}
\label{C1}
S_k - S_{k + 1} \geq S_k^2 + t S_k^3 + f(k).
\end{equation}
\item[(C2)] There exists a nonnegative sequence $(S_k)_{k\geq k_0}$ satisfying $S_k\to 0$ and
\begin{equation}
\label{C2}
S_k - S_{k + 1} = S_k^2 + t S_k^3 + f(k).
\end{equation}
\end{itemize}
\end{proposition}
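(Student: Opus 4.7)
The plan is to prove the equivalences via a small cycle, handling the $t$-parameter as a final step. The implications (B2) $\Rightarrow$ (B1) and (C2) $\Rightarrow$ (C1) are immediate. For (B1) $\Rightarrow$ (B2), I would construct a maximal sequence $(\tilde S_k)$ dominating $(S_k)$ by setting $\tilde S_{k_0} := S_{k_0}$ and recursively defining $\tilde S_{k+1}$ as the unique nonnegative root of $y^2 + y = \tilde S_k - f(k)$. Since $\phi(y) := y^2 + y$ is strictly increasing on $\Rplus$, a straightforward induction using (B1) (which yields $S_k - f(k) \geq S_{k+1}^2 + S_{k+1} = \phi(S_{k+1}) \geq 0$) shows $\tilde S_k \geq S_k$, so the recursion is feasible and $(\tilde S_k)$ satisfies (B2). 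The proof of (C1) $\Rightarrow$ (C2) is identical.

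For (A) $\Rightarrow$ (B1), I would set $S_k := g(k-1)$ for $k$ large. Since $f \in \HH$ is nonnegative and integrable (the latter because $f \in \RR$), the Hardy-field hypothesis forces $f$ to be eventually decreasing. Integrating the ODE $-g' = g^2 + f$ over $[k-1, k]$ and bounding $g^2(x) \geq g(k)^2$ and $f(x) \geq f(k)$ on this interval yields $S_k - S_{k+1} \geq S_{k+1}^2 + f(k)$, which is (B1). For (C2) with $t \geq 0$ $\Rightarrow$ (A), I would take the piecewise linear interpolation $\bar g(x) := S_k + (x - k)(S_{k+1} - S_k)$ on $[k, k+1]$. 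Then $-\bar g'(x) = S_k - S_{k+1} = S_k^2 + t S_k^3 + f(k) \geq S_k^2 + f(x) \geq \bar g(x)^2 + f(x)$ (using $t \geq 0$, $\bar g \leq S_k$, and the eventual monotonicity of $f$), so $\bar g$ is a supersolution and $f \in \RR$ by Remark \ref{remarkcomparison}.

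The equivalence between the (B) and (C) forms for varying $t$ hinges on the algebraic identity $S_{k+1}^2 = S_k^2 - 2 S_k \epsilon_k + \epsilon_k^2$, where $\epsilon_k := S_k - S_{k+1}$. Rewriting (B1) as $\epsilon_k(1 + 2 S_k - \epsilon_k) \geq S_k^2 + f(k)$ and expanding for small $S_k$ shows (B1) is effectively (C1) with $t \approx -2$. For other values of $t$, one perturbs the sequence: replacing $S_k$ by $S_k' := S_k + c S_k^2$ for an appropriate constant $c$ shifts the coefficient of $S_k^3$ by $O(c)$, enabling (C1) to hold with any prescribed $t$. The main obstacle I anticipate is handling (C2) $\Rightarrow$ (A) with $t < 0$, since the piecewise-linear supersolution construction requires $t \geq 0$. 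My plan is to route the case $t < 0$ through the perturbation argument, upgrading a (C2)-sequence with $t_1 < 0$ to one with some $t_2 \geq 0$ (possibly absorbing a cubic correction into $f$ via Lemma \ref{lemmaignorable}), and then invoking the supersolution argument. Throughout, the hypothesis $f \in \HH$ is crucial, ensuring that $(S_k)$ has the regular asymptotic behavior (e.g., $S_k \to 0$ and controlled ratios $f(k)/S_k^2$) needed to make these perturbation estimates work.
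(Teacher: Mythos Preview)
Your cycle (A) $\Rightarrow$ (B1) $\Rightarrow$ (B2) $\Rightarrow$ (C1) $\Rightarrow$ (C2) $\Rightarrow$ (A) matches the paper's, and your (A) $\Rightarrow$ (B1) and (C) $\Rightarrow$ (A) arguments coincide with the paper's. Your forward-recursion proof of (B1) $\Rightarrow$ (B2) is in fact cleaner than the paper's: the paper builds, for each $N$, a backward solution $(S_k^{(N)})_{k\le N}$ with $S_N^{(N)}=0$, shows monotonicity in $N$, and passes to the limit, whereas your direct inductive comparison with the given (B1)-sequence is shorter and just as rigorous.

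The real gap is in the passage (B2) $\Rightarrow$ (C1). Your perturbation $S_k' = S_k + cS_k^2$, and the remark that ``(B1) is effectively (C1) with $t\approx -2$'', both presuppose that $f(k)$ and $S_k^2$ are comparable; otherwise the cross term $2S_k f(k)$ hidden in your expansion of $S_{k+1}^2$ cannot be absorbed into a term of the form $tS_k^3$. You flag that you need ``controlled ratios $f(k)/S_k^2$'' but do not indicate how to obtain this, and it does not follow from $f\in\HH$ alone. The paper supplies precisely this missing piece: first (Remark \ref{remarkETSf}) it reduces, using that (A), (B1), (C1) are all downward-closed in $f$, to the regime $1/(4x^2)\le f(x)\le 1/x^2$; then, under (B2) and this reduction, it proves the key Claim $kS_k\to 1/2$ via the substitution $S_k=(1/2+T_k)/k$, showing $(T_k)$ is decreasing with $\sum T_k^2/k<\infty$. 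Only after this does the perturbation go through, and the paper uses $\tilde S_k=S_k+C/k^2$ rather than $S_k+cS_k^2$ (equivalent once $S_k\sim 1/(2k)$, but it makes the arithmetic explicit).

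A minor point: (C1) $\Rightarrow$ (C2) is not literally ``identical'' to your (B1) $\Rightarrow$ (B2). The (C2) recursion is forward-\emph{explicit} in $S_{k+1}$, so the relevant monotone map is $x\mapsto x-x^2-tx^3$, which is only increasing near $0$; the paper starts the recursion at $k_1$ with $S_{k_1}\le 1/\max(5,|t|+1)$ to ensure this, and then separately verifies $\tilde S_k\to 0$ by taking the limit of the recursion.
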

%\begin{proof}[Proof of \text{(A) \implies (C1)} (attempted)]
%Let $g:\Tplus\to\Rplus$ be a recursive antiderivative of $f$. Fix $C > 0$ large to be determined, and let
%\[
%S_k = g(k - 1) + \frac{C}{k^2}\cdot
%\]
%Then
%\begin{align*}
%S_k - S_{k + 1}
%&= \int_{k - 1}^k (-g'(x))
%
%
%- S_k^2 - tS_k^3 - f(k)
%
%
%
%\end{align*}
%
%
%\end{proof}

\begin{remark*}
Suppose that $f$ satisfies any of the conditions (B1)-(C2). Plugging the formula $S_k\to 0$ into the appropriate equation \eqref{B1} or \eqref{C1} shows that $\limsup_{k\to\infty} f(k) \leq 0$. Since $f\in\HH$ and $f\geq 0$, it follows that $f(x)\to 0$ as $f\to\infty$. Again using the facts that $f\in\HH$ and $f\geq 0$, we deduce that $f$ is decreasing for sufficiently large $x$. Similar reasoning applies if we assume that $f$ satisfies (A).

Thus in the proof of Proposition \ref{propositionintegraltest}, we may assume that $f$ is decreasing on its domain of definition.
\end{remark*}

\begin{remark}
\label{remarkETSf}
Conditions (A), (B1), and (C1) all have the property that when $f_2$ satisfies the condition and $0\leq f_1\leq f_2$, then $f_1$ also satisfies the condition. Thus in proving the equivalences (A) \iff (B1) \iff (C1), it suffices to consider the case where
\begin{equation}
\label{ETSf}
\frac{1}{4x^2} \leq f(x) \leq \frac{1}{x^2} \text{ for all sufficiently large $x$.}
\end{equation}
Indeed, suppose that Proposition \ref{propositionintegraltest} holds whenever $f$ satisfies \eqref{ETSf}. Then by Lemma \ref{lemmarecint1}, the function $f_-(x) = 1/(4x^2)$ satisfies (A), (B1), and (C1) while the function $f_+(x) = 1/x^2$ fails to satisfy them. Now let $f\in\HH$ be arbitrary. If $f$ does not satisfy \eqref{ETSf}, then by Lemma \ref{lemmahardy} either $f(x) \leq f_-(x)$ for all $x$ sufficiently large or $f(x) \geq f_+(x)$ for all $x$ sufficiently large. In the first case, (A), (B1), and (C1) hold while in the second case, (A), (B1), and (C1) fail to hold.
\end{remark}

\begin{proof}[Proof of \text{(A) \implies (B1)}]
If $g:\Tplus\to\Rplus$ is a recursive antiderivative of $f$, then the sequence $S_k = g(k - 1)$ satisfies \eqref{B1}.
\end{proof}
\begin{proof}[Proof of \text{(B1) \implies (B2)}]
Suppose that the sequence $(S_k)_{k\geq k_0}$ satisfies \eqref{B1}. For each $N\geq k_0$, let $(S_k^{(N)})_{k = k_0}^N$ be the unique sequence satisfying \eqref{B2} for $k = k_0,\ldots,N - 1$ and such that $S_N^{(N)} = 0$. Backwards induction shows that for each $k$, the sequence $(S_k^{(N)})_{N\geq k}$ is increasing, and $S_k^{(N)}\leq S_k$ for all $N\geq k$. Let
\[
\w S_k = \lim_{N\to\infty} S_k^{(N)} \in \Rplus.
\]
Then the nonnegative sequence $(\w S_k)_{k\geq k_0}$ satisfies \eqref{B2}.
\end{proof}
\begin{proof}[Proof of \text{(B2) \implies (C1)}]
Suppose that $(S_k)_{k\geq k_0}$ satisfies \eqref{B2}, and that \eqref{ETSf} holds.
\begin{claim}
$k S_k \to 1/2$.
\end{claim}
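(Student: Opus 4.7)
The plan is to sandwich $T_k := k S_k$ toward $1/2$ using the structure of \eqref{B2} together with the Hardy-field hypothesis on $f$.

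First I would check $S_k \to 0$. The recursion \eqref{B2} with $f(k) \geq 1/(4k^2) > 0$ forces $(S_k)$ to be strictly decreasing for large $k$, hence convergent to some $\ell \geq 0$; passing to the limit in \eqref{B2} and using $f(k) \leq 1/k^2 \to 0$ yields $\ell = \ell^2$, so $\ell = 0$. A cheap bootstrap from $S_k - S_{k+1} \geq S_{k+1}^2$ alone gives $u_{k+1}^2 \geq u_k u_{k+1} + u_k$ (where $u_k := 1/S_k$), hence $u_{k+1} \geq u_k + 1/2$ once $u_k \geq 1/2$; iterating, $u_k$ grows at least linearly, so $T_k$ is uniformly bounded (in fact $\limsup T_k \leq 2$).

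Second, since $x^2 f(x) \in \HH$ and lies in $[1/4,1]$ eventually (by \eqref{ETSf}), the limit $L := \lim_{k\to\infty} k^2 f(k) \in [1/4,1]$ exists. Telescoping \eqref{B2} (valid because $S_k \to 0$) gives the key identity
\[
S_K \;=\; \sum_{k \geq K}\bigl(S_{k+1}^2 + f(k)\bigr).
\]
From this, $\liminf k S_k \geq m$ implies $\liminf k S_k \geq m^2 + L$. Iterating $m \mapsto m^2 + L$ from $m_0 = 0$ converges to the fixed point $(1-\sqrt{1-4L})/2$ when $L \leq 1/4$, and diverges to $+\infty$ when $L > 1/4$. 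The latter contradicts the boundedness of $T_k$ established above, so $L = 1/4$, the fixed point is $1/2$, and $\liminf k S_k \geq 1/2$.

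Third, I would prove $\limsup k S_k \leq 1/2$ by showing that $T_k$ is essentially monotonically decreasing toward $1/2$. Substituting $S_k = T_k/k$ into \eqref{B2} and expanding yields
\[
T_{k+1} - T_k \;=\; -\frac{(T_{k+1} - 1/2)^2}{k} \;-\; k\Bigl(f(k) - \tfrac{1}{4k^2}\Bigr) \;+\; O(k^{-2}),
\]
where the constant in $O(k^{-2})$ depends on $\limsup T_k \leq 2$. The first two terms are nonpositive (the second since $f(k) \geq 1/(4k^2)$), so the only upward drift is the summable $O(k^{-2})$. If $T_k \geq 1/2 + \epsilon$ held for all $k \geq k_0$, summing the displayed identity would give $T_K - T_{k_0} \leq -\epsilon^2 \log(K/k_0) + O(1/k_0) \to -\infty$, contradicting $T_K \geq 0$; hence $T_k < 1/2 + \epsilon$ infinitely often. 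Picking such a $k_0$ arbitrarily large, the total upward drift from time $k_0$ onward is $O(1/k_0) = o(1)$, so $T_K \leq 1/2 + \epsilon + o(1)$ for all $K \geq k_0$. Letting $\epsilon \to 0$ gives $\limsup T_k \leq 1/2$.

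The main obstacle is the Taylor expansion in the third step: the leading $O(1/k)$ contributions from $T_{k+1}/(k+1)$, $kT_{k+1}^2/(k+1)^2$, and $kf(k)$ must be arranged to cancel into the clean square $-(T_{k+1}-1/2)^2/k$, with only summable $O(k^{-2})$ errors remaining. This is the discrete analogue of a cleaner continuous fact: the substitution $h(x) := x g(x)$ turns the ODE $-g'(x) = g^2(x) + 1/(4x^2)$ into $-x h'(x) = (h(x) - 1/2)^2$, all of whose nonnegative solutions decrease monotonically to the equilibrium $h = 1/2$.
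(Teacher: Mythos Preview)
Your argument is correct, but it is substantially longer than the paper's and uses more hypotheses than necessary. The paper proceeds in one stroke by making the very substitution you mention only in your final paragraph: set $T_k := kS_k - \tfrac12$, plug into the inequality $S_k - S_{k+1} \geq S_{k+1}^2 + \tfrac{1}{4k^2}$ (which uses \emph{only} the lower bound in \eqref{ETSf}), and after a short calculation obtain
\[
\frac{1}{k}\,(T_k - T_{k+1}) \;\geq\; \frac{T_{k+1}^2}{(k+1)^2}.
\]
This shows at once that $(T_k)$ is decreasing; since $T_k \geq -\tfrac12$ (from $S_k \geq 0$) it converges, and summing the displayed inequality gives $\sum_k T_{k+1}^2/k < \infty$, forcing the limit to be $0$. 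That is the whole proof.

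So your continuous heuristic at the end is precisely the organizing idea the paper exploits from the start; had you led with it, your steps 1 and 2 would have been unnecessary. In particular, the paper never needs to know that $L := \lim k^2 f(k)$ exists (let alone equals $1/4$), never invokes the Hardy-field structure of $f$, and never splits into $\liminf$/$\limsup$. Your route does buy one thing the paper's does not: the conclusion $L = 1/4$, which is a genuine extra piece of information about any $f\in\HH$ for which \eqref{B2} admits a nonnegative solution. But for the claim at hand the paper's argument is both shorter and strictly more general (it works for any $f$ satisfying $f(k) \geq 1/(4k^2)$, Hardy field or not).
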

\begin{subproof}
By \eqref{B2} and \eqref{ETSf}, we have
\begin{equation}
\label{14k2}
S_k - S_{k + 1} \geq S_{k + 1}^2 + \frac{1}{4k^2} \text{ for all $k$ sufficiently large.}
\end{equation}
In analogy with the proof of Lemma \ref{lemmaignorable}, for each $k$ let $T_k\geq -1/2$ satisfy
\[
S_k = \frac1k\left[\frac12 + T_k\right].
\]
Plugging into \eqref{14k2} gives
\[
\frac1k(T_k - T_{k + 1}) + \frac1{k(k + 1)}\left[\frac12 + T_{k + 1}\right] \geq \frac1{(k + 1)^2}\left[T_{k + 1}^2 + T_{k + 1} + \frac14\right] + \frac{1}{4k^2}
\]
and thus
\[
\frac1k(T_k - T_{k + 1}) \geq \frac{T_{k + 1}^2}{(k + 1)^2}\cdot
\]
It follows that the sequence $(T_k)_1^\infty$ is decreasing and bounded from below. Thus the limit $\lim_{k\to\infty}T_k$ exists, and
\[
\infty > \sum_k (T_k - T_{k + 1}) \geq \sum_k \frac{k}{(k + 1)^2} T_{k + 1}^2 \asymp \sum_k \frac1k T_{k + 1}^2,
\]
which implies that $\lim_{k\to\infty}T_k = 0$. Equivalently, $\lim_{k\to\infty} k S_k = 1/2$.
\end{subproof}

In particular, $S_k\to 0$. Fix $C > 0$, and let
\[
\w S_k = S_k + \frac{C}{k^2}\cdot
\]
Then $\w S_k\to 0$ as well. So to complete the proof, we need to show that \eqref{C1} holds for the sequence $(\w S_k)_{k\geq k_0}$. We have
\begin{align*}
\w S_k - \w S_{k + 1}
&\geq S_k - S_{k + 1} + \frac{2C}{(k + 1)^3}\\
&= S_{k + 1}^2 + f(k) + \frac{2C}{(k + 1)^3}\\
&= \w S_k^2 + f(k) + \frac{2C}{(k + 1)^3} - (\w S_k + S_{k + 1})(\w S_k - S_{k + 1})\\
&\geq \w S_k^2 + f(k) + \frac{2C}{(k + 1)^3} - 2\left(S_k + \frac{C}{k^2}\right)\left(S_{k + 1}^2 + f(k) + \frac{C}{k^2}\right).
\end{align*}
Let
\[
E_k = \frac{2C}{(k + 1)^3} - 2\left(S_k + \frac{C}{k^2}\right)\left(S_{k + 1}^2 + f(k) + \frac{C}{k^2}\right) - t \w S_k^3,
\]
so that
\[
\w S_k - \w S_{k + 1} \geq \w S_k^2 + f(k) + t \w S_k^3 + E_k.
\]
So to complete the proof, it suffices to show that if $C$ is large enough, then $E_k \geq 0$ for all $k$ sufficiently large. And indeed,
\begin{align*}
\liminf_{k\to\infty} k^3 E_k
&= 2C - 2\limsup_{k\to\infty}\left[\left(k S_k + \frac{C}{k}\right)\left(k^2 S_{k + 1}^2 + k^2 f(k) + C\right)\right] - t \left(\limsup_{k\to\infty} k \w S_k\right)^3\\
&\geq 2C - 2 (1/2)(1/4 + 1 + C) - t/8 = C - t/8 - 5/4.
\end{align*}
Thus by choosing $C > t/8 + 5/4$, we complete the proof.
\end{proof}
\begin{proof}[Proof of \text{(C1) \implies (C2)}]
Suppose that the sequence $(S_k)_{k\geq k_0}$ satisfies $S_k\to 0$ and \eqref{C1}. Fix $k_1\geq k_0$ large enough so that
\[
S_{k_1} \leq \frac{1}{\max(5,|t| + 1)}\cdot
\]
Then for all $0 < x\leq S_{k_1}$, we have $x^2 + tx^3 > 0$ and $(\dee/\dee x)[x - x^2 - tx^3] \geq 0$. Let $(\w S_k)_{k\geq k_1}$ be the unique sequence satisfying \eqref{C2} and $\w S_{k_1} = S_{k_1}$. An induction argument shows that for all $k\geq k_1$, $S_k \leq \w S_k \leq S_{k_1}$ and $\w S_{k + 1} \leq \w S_k$. In particular the sequence $(\w S_k)_{k\geq k_0}$ is nonnegative. To complete the proof we need to show that $\w S_k\to 0$. Since $(\w S_k)_k$ is decreasing, the limit $L = \lim_{k\to\infty}\w S_k$ exists. Taking the limit of \eqref{C2} we find that
\[
L - L = L^2 + tL^3.
\]
Since $0\leq L \leq S_{k_1}$, this implies that $L = 0$.
\end{proof}
\begin{proof}[Proof of \text{(C1) \implies (A)}]
First suppose $t = 0$. If $(S_k)_{k\geq k_0}$ satisfies $S_k\to 0$ and \eqref{C1}, then let $g$ be the linear interpolation of $(S_k)_{k\geq k_0}$, i.e.
\[
g(x) = S_k + (x - k)(S_{k + 1} - S_k) \text{ for } k \leq x \leq k + 1.
\]
Then
\[
-g'(x) = S_{k + 1} - S_k \geq S_k^2 + f(k) \geq g^2(x) + f(x) \all k < x < k + 1,
\]
so by Remark \ref{remarkcomparison} $f\in\RR$.

Now suppose $t\neq 0$ and that $(S_k)_{k\geq k_0}$ satisfies $S_k\to 0$ and \eqref{C1}. Let $\ell$ be large enough so that
\[
\frac{2}{k} - \left(\frac{2}{k}\right)^2 - t\left(\frac{2}{k}\right)^3 \leq \frac{2}{k + 1} \all k\geq \ell,
\]
and let $k_1\geq k_0$ be large enough so that $S_{k_1 + \ell} \leq 2/\ell$. Then an induction argument shows that
\begin{equation}
\label{Sklowerbound}
S_k \leq \frac{2}{k - k_1} \all k\geq k_1 + \ell.
\end{equation}
In particular, there exists $C > 0$ such that $S_k \leq C/k$ for all $k\geq k_0$. Then
\[
S_k - S_{k + 1} \geq S_k^2 + f(k) - \frac{|t|C^3}{k^3},
\]
and so by the $t = 0$ case of (C1) \implies (A), the function $x\mapsto f(x) - |t|C^3/x^3$ is recursively integrable.  Since the function $x\mapsto |t| C^3/x^3$ is ignorable (Lemma \ref{lemmaignorable}), $f$ is also recursively integrable.
\end{proof}

\draftnewpage
\section{Proof of Theorems \ref{theoremnooptimaldirichlet} and \ref{theoremHmax} and formula \eqref{exponents1}}
\label{sectionHmax}

As in Section \ref{sectionrecint}, $\HH$ denotes a Hardy field which contains the exponential and logarithm functions and is closed under composition, for example the field of Hardy $L$-functions. As in Section \ref{sectioninterlude}, we write
\begin{align*}
\gamma_d &= (d - 1)^{1/d} > 1 \hspace{.3 in}(\text{if }d\geq 3)\\
\alpha_d &= \gamma_d + \gamma_d^{-(d - 1)} = d(d - 1)^{-(d - 1)/d}.
\end{align*}

Theorems \ref{theoremnooptimaldirichlet} and \ref{theoremHmax} and formula \eqref{exponents1} will all follow from the following result:

\begin{proposition}
\label{propositionHmax}
Suppose that $d\geq 3$, and fix $\psi\in\HH$. Then the following are equivalent:
\begin{itemize}
\item[(A)] $\psi$ is Dirichlet on $\R^d$ with respect to $\Hmax$.
\item[(B)] $\psi$ is uniformly Dirichlet on $\R^d$ with respect to $\Hmax$.
\item[(C)] $C_{\Hmax,\psi}(\xx) = 0$ for all $\xx\in\R^d$, i.e. $\psi$ is non-optimally Dirichlet on $\R^d$ with respect to $\Hmax$.
\item[(D)] The function
\[
f_\psi(x) = \frac{2}{d\gamma_d}\left[\alpha_d + \frac{\log\psi(e^{\gamma_d^x})}{\gamma_d^x}\right]
\]
is nonnegative for large values of $x$ and satisfies $f_\psi\notin\RR$.
%$\psi(q)\geq q^{-\alpha_d}$ for all $q$ sufficiently large, and the recursive series
%\begin{equation}
%\label{dirichletseriesv2}
%\sum_{k = 1}^\infty \rec \frac{2}{d\gamma_d} \left[\alpha_d + \frac{\log\psi(e^{\gamma_d^k})}{\gamma_d^k}\right]
%\end{equation}
%diverges.
\end{itemize}
In particular, no function $\psi\in\HH$ is optimally Dirichlet on $\R^d$ with respect to the height function $\Hmax$.
\end{proposition}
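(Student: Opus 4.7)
The plan is to translate the question via Lemma \ref{lemmacondition} (applied with $\Theta = \tmax$, so $\Xi = \tmax$ and $\Psi(b) = -\log\psi(e^b)$) into one about the values $C_{\tmax,\Psi}(\Delta)$ for data progressions $\Delta$. Under this translation, both (A) and (B) become $\sup_\Delta C_{\tmax,\Psi}(\Delta) < \infty$ (the supremum being infinite iff witnessed by some single $\Delta$), while (C) becomes $\sup_\Delta C_{\tmax,\Psi}(\Delta) = -\infty$. Since (C) $\Rightarrow$ (B) $\Rightarrow$ (A) are immediate and the ``in particular'' statement about optimality follows from (A) $\iff$ (C), it suffices to prove not-(D) $\Rightarrow$ not-(A) and (D) $\Rightarrow$ (C).

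For not-(D) $\Rightarrow$ not-(A), I exploit the fact that $\psi\in\HH$ forces $f_\psi$ to have eventually constant sign, so not-(D) splits into the case $f_\psi \leq 0$ eventually (equivalently $\psi \leq \psi_{\alpha_d}$ eventually, so $\Psi \geq \Psi_{\alpha_d}$) and the case $f_\psi \geq 0$ eventually with $f_\psi\in\RR$. In the first case, for the periodic data progression with $A_k = \gamma_d^k \log k$ and $i_k$ cycling through $\{1,\ldots,d\}$, a Taylor expansion of $\log(k-d+1)$ and $\log(k+1)$ around $\log k$ yields
\[
\Psi_{\alpha_d}(A_k) - A_{k-d+1} - A_{k+1} = \frac{d\gamma_d^{k+1}}{2k^2} + O\bigl(\gamma_d^k/k^3\bigr) \to +\infty,
\]
where the cancellations $\alpha_d - \gamma_d^{-(d-1)} - \gamma_d = 0$ and $(d-1)\gamma_d^{-(d-1)} - \gamma_d = 0$ kill the leading and first-order terms; monotonicity of $\Psi$ then gives $C_{\tmax,\Psi}(\Delta) = +\infty$. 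In the second case, Proposition \ref{propositionintegraltest} provides a nonnegative sequence $(S_k) \to 0$ with $S_k - S_{k+1} = S_{k+1}^2 + f_\psi(k)$; a data progression of the form $A_k = \gamma_d^k e^{\eta_k}$ with $\eta_k$ a suitable slowly-growing function built from $(S_k)$ realizes $C_{\tmax,\Psi}(\Delta) = +\infty$, the zeroth- and first-order terms of the Taylor expansion of $\Psi(A_k) - A_{k-d+1} - A_{k+1}$ about $\eta_k = 0$ vanishing by Lemma \ref{lemmainterlude1} and the surviving quadratic part reproducing the recursion defining $(S_k)$.

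For (D) $\Rightarrow$ (C), I reduce to nice data progressions using two claims to be established in Section \ref{sectionHmax}: any $\Delta$ with $C_{\tmax,\Psi}(\Delta) > -\infty$ may be replaced, without decreasing this value, by one that is eventually $d$-periodic (Claim \ref{claimrotating}) and whose underlying sequence $(A_k)$ is asymptotically geometric with multiplier $\gamma_d$ (Claim \ref{claimtkgammad}). For such $\Delta$, parameterize $A_k = \gamma_d^k e^{\eta_k}$ and perform the same Taylor expansion; after rescaling by $\frac{d\gamma_d^{k+1}}{2}$, the resulting inequality matches condition (C1) of Proposition \ref{propositionintegraltest} applied to $f_\psi$, with the cubic $tS_k^3$ term absorbing subleading expansion errors and Lemma \ref{lemmaignorable} absorbing the smooth variation of $f_\psi$. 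If $C_{\tmax,\Psi}(\Delta) > -\infty$ along a subsequence, then a sequence $(S_k)$ extracted from $(\eta_k)$ would witness (C1), forcing $f_\psi\in\RR$ and contradicting (D). Hence $C_{\tmax,\Psi}(\Delta) = -\infty$ for every $\Delta$, yielding (C).

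The main obstacle is the quantitative matching between the Taylor expansion around the critical multiplier $\gamma_d$ and the recursion defining $\RR$; Corollary \ref{corollarymaximum}, whose maximum value of exactly $1$ is what pins down the constant $d\gamma_d\log^2(\gamma_d)/8$ appearing in $f_\psi$ (and in Theorem \ref{theoremHmax}), is the decisive calculation. A secondary difficulty is the reduction to eventually periodic, asymptotically geometric $\Delta$: this requires the Hardy-field regularity of $\psi$ to rule out oscillatory obstructions, and is also what underwrites the dichotomy $\sup_\Delta C_{\tmax,\Psi}(\Delta) \in \{-\infty,+\infty\}$ responsible for the equivalence (B) $\iff$ (C).
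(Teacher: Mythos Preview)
Your overall strategy matches the paper's: translate via Lemma~\ref{lemmacondition} to data progressions, reduce to eventually periodic progressions that are asymptotically geometric with multiplier $\gamma_d$, Taylor-expand around $\gamma_d$, and match the resulting quadratic recursion against the definition of $\RR$ via Proposition~\ref{propositionintegraltest}. You also correctly identify Corollary~\ref{corollarymaximum} as the key cancellation.

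There is, however, a genuine gap. You claim to prove the strong implications (D)~$\Rightarrow$~(C) and not-(D)~$\Rightarrow$~not-(A) directly, but the analysis you sketch only delivers the weaker pair (D)~$\Rightarrow$~(B) and not-(D)~$\Rightarrow$~not-(C). Concretely: in your case~2 of not-(D)~$\Rightarrow$~not-(A), the data progression built from a sequence $(S_k)$ furnished by Proposition~\ref{propositionintegraltest} yields $C_{\tmax,\Psi}(\Delta) \geq 0$, not $=+\infty$; this is exactly what the paper obtains in its proof of (C)~$\Rightarrow$~(D), and it gives only not-(C). Dually, in your (D)~$\Rightarrow$~(C) you start from $C_{\tmax,\Psi}(\Delta) > -\infty$, whereas Claims~\ref{claimtkgammad} and~\ref{claimrotating} in the paper are derived from the hypothesis $C_{\tmax,\Psi}(\Delta) > 0$ (they show such a $\Delta$ \emph{is} eventually periodic and asymptotically geometric, not that it can be ``replaced'' by one). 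The phrase ``$C_{\tmax,\Psi}(\Delta) > -\infty$ along a subsequence'' is also not meaningful, since this quantity is a single liminf.

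The paper closes this gap not by sharpening the estimates but by a perturbation trick you omit: with $\phi(q) = q^{1/\log^3\log q}$ one has $\phi \to \infty$ while $f_{\phi\psi} - f_\psi$ and $f_\psi - f_{\psi/\phi}$ are both $O(1/x^3)$ and hence ignorable (Lemma~\ref{lemmaignorable}), so (D)$_\psi \Leftrightarrow$ (D)$_{\phi\psi} \Leftrightarrow$ (D)$_{\psi/\phi}$. Since $\phi \to \infty$, one has (A)$_\psi \Rightarrow$ (C)$_{\phi\psi}$ and (B)$_{\psi/\phi} \Rightarrow$ (C)$_\psi$, and chaining these with the two directly-proven implications yields the full equivalence. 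Without this device (or a careful quantitative tracking of slack of order $\gtrsim A_k/k^3$ through every step of the Taylor analysis, which you do not carry out), the dichotomy $\sup_\Delta C_{\tmax,\Psi}(\Delta) \in \{-\infty,+\infty\}$ that you invoke at the end remains unproven.
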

\begin{proof}[Proof of Theorems \ref{theoremnooptimaldirichlet} and \ref{theoremHmax} and formula \eqref{exponents1} assuming Proposition \ref{propositionHmax}]
Suppose that Proposition \ref{propositionHmax} is true. Then for all $\alpha\geq 0$, $\psi_\alpha$ is Dirichlet on $\R^d$ with respect to $\Hmax$ if and only if $\alpha < \alpha_d$. It follows that $\omega_d(\Hmax) = \alpha_d$, demonstrating the formula \eqref{exponents1}.

Since Theorem \ref{theoremnooptimaldirichlet} is a restatement of the equivalence of (A) and (C) of Proposition \ref{propositionHmax}, to complete the proof it suffices to prove Theorem \ref{theoremHmax}. Specifically, given $N\geq 1$ and $C\geq 0$, we must show that the function $\psi_{N,C}$ of Theorem \ref{theoremHmax} satisfies the equivalent conditions (A)-(D) of Proposition \ref{propositionHmax} if and only if $C > 1$. Considering condition (D), we must show that $f_{\psi_{N,C}}\in\RR$ if and only if $C\leq 1$. But
\[
f_{\psi_{N,C}}(x) = \log^2(\gamma_d) f_{N - 2,C/4}(x\log(\gamma_d)),
\]
so this follows from Corollary \ref{corollaryrecint} and Lemma \ref{lemmarecint3}.
\end{proof}

%Once we prove this reformulation, the proof of Theorem \ref{theoremHmax} will be complete. Indeed, Proposition \ref{propositionHmax} implies that for all $\alpha\geq 0$, $\psi_\alpha$ is Dirichlet on $\R^d$ with respect to $\Hmax$ if and only if $\alpha < \alpha_d$. It follows that $\omega_d(\Hmax) = \alpha_d$, and applying this substitution to Proposition \ref{propositionHmax} yields Theorem \ref{theoremHmax}.

The proof of Proposition \ref{propositionHmax} will be divided into three parts: the proof of (D) \implies (B), which constitutes the hardest part of the argument; the proof of (C) \implies (D), which is essentially the proof of (D) \implies (B) in reverse, but made easier due to the explicitness of the data structure in question; and finally, the reduction of the theorem to those two implications, which is essentially a corollary of Lemma \ref{lemmaignorable}.

%\begin{remark*}
%By Lemma \ref{lemmarecint3} and Proposition \ref{propositionintegraltest}, the function $f_\psi$ is recursively nonintegrable if and only if the recursive series
%\begin{equation}
%\label{dirichletseriesv2}
%\sum_{k = 1}^\infty \log^2(\gamma_d) f_\psi(k\log(\gamma_d))
%= \sum_{k = 1}^\infty \rec \frac{2}{d\gamma_d} \left[\alpha_d + \frac{\log\psi(e^{\gamma_d^k})}{\gamma_d^k}\right]
%\end{equation}
%diverges.
%\end{remark*}

\begin{remark*}
Throughout the proof we will assume that
\begin{equation}
\label{assumptionphi}
\frac{1}{4x^2} \leq f_\psi(x) \leq \frac{1}{x^2} \text{ for all $x$ sufficiently large.}
\end{equation}
The justification of this assumption follows along the same lines as Remark \ref{remarkETSf}. Specifically, suppose that Proposition \ref{propositionHmax} holds whenever $\psi$ satisfies \eqref{assumptionphi}. Let $\psi_-$ and $\psi_+$ denote the functions for which equality holds in the left and right hand inequalities of \eqref{assumptionphi}, respectively. Then by Lemma \ref{lemmarecint1}, $\psi_+$ satisfies (A)-(D) of Proposition \ref{propositionHmax} while $\psi_-$ fails to satisfy them. Now let $\psi\in\HH$ be arbitrary. If $\psi$ does not satisfy \eqref{assumptionphi}, then by Lemma \ref{lemmahardy} either $\psi(q) \geq \psi_+(q)$ for all $q$ sufficiently large or $\psi(q) \leq \psi_-(q)$ for all $q$ sufficiently large. In the first case, we have $C_{\Hmax,\psi}\leq C_{\Hmax,\psi_+}$ and so (A)-(D) of Proposition \ref{propositionHmax} hold. In the second case, we have $C_{\Hmax,\psi} \geq C_{\Hmax,\psi_-}$ and so (A)-(D) of Proposition \ref{propositionHmax} fail to hold.
\end{remark*}

\begin{remark}
\label{remarkreverseimplication}
When reading the proof of (D) \implies (B), one should check that the implications \eqref{contradictionhypothesis5} \implies \eqref{contradictionhypothesis6} \implies \eqref{contradictionhypothesis7} \implies \eqref{contradictionhypothesis8} are all invertible if one assumes the following facts about $\Delta$: $\max(\Delta_k) = A_k$ for all $k\in\N$, and $\Delta$ is eventually periodic in the sense of Claim \ref{claimrotating}. The converse directions will be used in the proof of (C) \implies (D).
\end{remark}

\begin{notation*}
The following notations will be used in the course of the proof:
\begin{align*}
\Psi(b) &= -\log\psi(e^b)\\
\Phi(b) &= \alpha_d - \frac{\Psi(b)}{b}\cdot
\end{align*}
Note that according to these notations,
\begin{equation}
\label{notation}
f_\psi(x) = \frac{2}{d\gamma_d} \Phi(\gamma_d^x).
\end{equation}
\end{notation*}

\subsection{Proof of \text{(D) \implies (B)}}
We prove the contrapositive. Suppose that $\sup_{\R^d\butnot\Q^d} C_{\Hmax,\psi} = \infty$, and we will show that $f_\psi\in\RR$. By Lemma \ref{lemmacondition}, we have $\sup_\Delta C_{\tmax,\Psi}(\Delta) = \infty$, where the supremum is taken over $d$-dimensional data progressions $\Delta$. In particular, there exists a $d$-dimensional data progression $\Delta = (A_k,i_k)_{k = 1}^\infty$ such that $C_{\tmax,\Psi}(\Delta) > 0$. It follows that
\begin{equation}
\label{contradictionhypothesis5}
b_k^{(i_k)} + b_{k + 1}^{(i_k)} \leq \Psi(\max(\Delta_k))
\end{equation}
for all $k$ sufficiently large.

\begin{claim}
We may suppose without loss of generality that $\max(\Delta_k) = A_k$ for all $k\in\N$.
\end{claim}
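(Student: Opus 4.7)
The plan is to pass from $\Delta = (A_k,i_k)_{k=1}^\infty$ to a simpler data progression $\tilde\Delta = (\tilde A_j, \tilde i_j)_{j=1}^\infty$ obtained by restricting to the subsequence of ``record-breaking'' times. Let $K := \{k \geq 1 : A_k > \max(\Delta_{k-1})\}$; by property (II), $\max(\Delta_k)$ is unbounded, and this can happen only via infinitely many record-breaking events, so $K$ is infinite. At every $k \in K$ the new entry $A_k$ is automatically the running maximum, so $\max(\Delta_k) = A_k$. Enumerate $K = \{k_1 < k_2 < \cdots\}$ and set $\tilde A_j := A_{k_j}$ and $\tilde i_{j-1} := i_{k_j - 1}$, so that $(\tilde A_j)$ is strictly increasing and hence $\max(\tilde\Delta_j) = \tilde A_j$ by construction.

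Next, I would verify that $\tilde\Delta$ is a valid $d$-dimensional data progression. Property (II) is immediate, since $(\tilde A_j)$ cofinally realizes the unbounded sequence $(\max(\Delta_k))_k$. Property (I) follows from the strict monotonicity of $(\tilde A_j)$: any previous value of coordinate $\tilde i_j$ in $\tilde\Delta$ is bounded by $\tilde A_j$, while $\tilde b_{j+1}^{(\tilde i_j)} = \tilde A_{j+1} > \tilde A_j$. The delicate point is the requirement $\{\tilde i_j : j\ge 1\} = \{1,\ldots,d\}$, which may fail if some coordinate of $\Delta$ never participates in a record. I plan to handle this edge case by inserting auxiliary record-breaking updates for the missing coordinate, each chosen just barely larger than the prior record so as to preserve the asymptotic behavior of $(\max(\Delta_k))_k$ and of the relevant inequality.

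Finally, I would transfer \eqref{contradictionhypothesis5} from $\Delta$ to $\tilde\Delta$. For each $j$, apply the original inequality at index $k := k_{j+1} - 1$, the step in $\Delta$ immediately preceding the next record: no new record has been set since time $k_j$, so $\max(\Delta_k) = \tilde A_j = \max(\tilde\Delta_j)$; the coordinate updated at step $k$ is $\tilde i_j$; and $b_{k+1}^{(i_k)} = \tilde A_{j+1} = \tilde b_{j+1}^{(\tilde i_j)}$. Because each coordinate's value in $\Delta$ is non-decreasing in $k$, the remaining term satisfies $b_k^{(i_k)} \geq b_{k_j}^{(\tilde i_j)} = \tilde b_j^{(\tilde i_j)}$, and the original inequality then yields the analogue of \eqref{contradictionhypothesis5} for $\tilde\Delta$ with $C_{\tmax,\Psi}(\tilde\Delta) > 0$. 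The main obstacle is the clean treatment of possibly inactive coordinates while keeping $\tilde\Delta$ a bona fide $d$-dimensional data progression; everything else amounts to subsequence extraction combined with monotonicity, after which we may replace $\Delta$ by $\tilde\Delta$ and assume $\max(\Delta_k) = A_k$ for all $k$.
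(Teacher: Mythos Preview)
Your approach is essentially the paper's: both pass to the subsequence of record-breaking indices (the paper uses $K=\{k:\max(\Delta_{k+1})>\max(\Delta_k)\}$ and sets $\tilde A_\ell=\max(\Delta_{k_\ell})$, $\tilde i_\ell=i_{k_\ell}$, which is your construction up to a shift of index), observe that the resulting $\tilde A_j$ are strictly increasing and hence equal the running maximum, and then transfer \eqref{contradictionhypothesis5} using monotonicity of the coordinate sequences. One small slip: the claimed equality $b_{k_j}^{(\tilde i_j)} = \tilde b_j^{(\tilde i_j)}$ is in general only the inequality $\geq$, since coordinate $\tilde i_j$ may receive non-record updates in $\Delta$ between record times---but this is exactly the direction you need, so nothing breaks (the paper states the corresponding bound as $\tilde b_\ell^{(i)} \leq b_{k_\ell}^{(i)}$). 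Your flagging of the ``inactive coordinate'' issue is more careful than the paper, which simply asserts that $\tilde\Delta$ is a data progression without comment.
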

\begin{proof}
Consider the set $K = \{k\in\N : \max(\Delta_{k + 1}) > \max(\Delta_k)\}$. The set $K$ is infinite by part (II) of the definition of a data progression. Let $(k_\ell)_1^\infty$ be the unique increasing indexing of $K$, and consider the data progression $\w\Delta = (\max(\Delta_{k_\ell}),i_{k_\ell})_{\ell = 1}^\infty$. Note that for all $\ell\in\N$ and $i = 1,\ldots,d$,
\begin{align*}
\w b_\ell^{(i)} &\leq b_{k_\ell}^{(i)}\\
\max(\w\Delta_\ell) &= \w A_\ell = \max(\Delta_{k_\ell}).
\end{align*}
Moreover, if $k = k_\ell$, then
\[
b_{k + 1}^{(i_k)} = \max(\Delta_{k + 1}) = \max(\Delta_{k_{\ell + 1}}) = A_{\ell + 1} = \w b_{\ell + 1}^{(\w i_\ell)}.
\]
Plugging all these into \eqref{contradictionhypothesis5} gives
\[
\w b_\ell^{(\w i_\ell)} + \w b_{\ell + 1}^{(\w i_\ell)} \leq \Psi(\max(\w\Delta_\ell)),
\]
i.e. \eqref{contradictionhypothesis5} holds for the data progression $\w\Delta$.
\end{proof}

So in what follows, we assume that $\max(\Delta_k) = A_k$ for all $k\in\N$. Using this fact together with \eqref{bkirec}, \eqref{contradictionhypothesis5} becomes
\[
b_k^{(i_k)} \leq \Psi(A_k) - A_{k + 1}.
\]
Letting $t_k = A_{k + 1}/A_k$, we may rewrite the above equation as
\begin{equation}
\label{contradictionhypothesis6}
b_k^{(i_k)} \leq A_k(\alpha_d - \Phi(A_k) - t_k).
\end{equation}
For each $k\in\N$ let
\[
f_k = \frac{\prod_{i = 1}^d b_k^{(i)}}{(A_k)^d};
\]
using \eqref{bkirec}, \eqref{contradictionhypothesis6} then becomes
\begin{equation}
\label{contradictionhypothesis7}
\frac{f_k}{f_{k + 1}} \leq (\alpha_d - \Phi(A_k) - t_k) t_k^{d - 1}.
\end{equation}

\begin{claim}
\label{claimincreasing}
For some $k_1\in\N$, the sequence $(f_k)_{k_1}^\infty$ is increasing.
\end{claim}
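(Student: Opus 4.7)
The plan is to read the inequality \eqref{contradictionhypothesis7} together with Corollary \ref{corollarymaximum}. Recall that \eqref{contradictionhypothesis7} gives
\[
\frac{f_k}{f_{k + 1}} \leq (\alpha_d - \Phi(A_k) - t_k)\, t_k^{d - 1},
\]
while Corollary \ref{corollarymaximum} tells us that for every positive real $\gamma$,
\[
(\alpha_d - \gamma)\gamma^{d - 1} \leq f_d(\gamma_d) = 1.
\]
So the goal is to discard the $\Phi(A_k)$ term in a favorable direction and then apply this inequality with $\gamma = t_k$.

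First I would verify that $A_k \to \infty$ under the reduction already in force (namely $\max(\Delta_k) = A_k$ for all $k$). Inspecting the recursive definition \eqref{bkirec}, the max of $\Delta_k$ can only decrease at step $k$ if the component that changes is itself the current max, and in that case \eqref{dataprog1} forces the new value to be strictly larger. Hence $(\max(\Delta_k))_{k=1}^\infty$ is nondecreasing, and by part (II) of Definition \ref{definitiondataprogression} it is unbounded, so $A_k \uparrow \infty$.

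Next, by the standing assumption \eqref{assumptionphi}, the function $f_\psi$ is nonnegative on $[x_0,\infty)$ for some $x_0$, and by \eqref{notation} this is equivalent to saying that $\Phi(A) \geq 0$ for all $A$ sufficiently large. Combining with $A_k\to\infty$, we get $\Phi(A_k) \geq 0$ for all $k \geq k_1$ (for some $k_1$). Since $t_k = A_{k + 1}/A_k > 0$ and $d - 1 \geq 2$, the factor $t_k^{d - 1}$ is nonnegative, so
\[
(\alpha_d - \Phi(A_k) - t_k)\, t_k^{d - 1} \leq (\alpha_d - t_k)\, t_k^{d - 1} \leq 1
\]
for all $k\geq k_1$ by Corollary \ref{corollarymaximum}. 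Feeding this back into \eqref{contradictionhypothesis7} yields $f_k \leq f_{k + 1}$ for all $k \geq k_1$, which is the desired monotonicity (in the sense of Convention 4).

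There is no real obstacle here beyond confirming that $A_k \to \infty$ in the reduced setting; the actual step that does the work is simply the fact, recorded in Corollary \ref{corollarymaximum}, that $\gamma_d$ is a global maximizer of $f_d$ with maximum value exactly $1$. This is precisely the reason that $\gamma_d$ was singled out in Section \ref{sectioninterlude}, and the present claim can be viewed as saying that the quantity $f_k$, which measures how much $\Delta_k$ deviates from the "balanced" configuration proportional to $(1,\gamma_d,\gamma_d^2,\ldots,\gamma_d^{d - 1})$, can only grow under the constraint \eqref{contradictionhypothesis5}.
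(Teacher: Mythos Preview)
Your proof is correct and follows exactly the paper's approach: use \eqref{assumptionphi} to get $\Phi(A_k)\geq 0$ for large $k$, drop that term from the right-hand side of \eqref{contradictionhypothesis7}, and then invoke Corollary \ref{corollarymaximum} to bound $(\alpha_d - t_k)t_k^{d-1}\leq 1$. The only addition is your explicit verification that $A_k\to\infty$, which the paper leaves implicit (and which, after the reduction to $\max(\Delta_k)=A_k$, is in fact immediate since the reduced sequence $(A_k)$ is strictly increasing by construction).
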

\begin{proof}
By \eqref{assumptionphi}, we have $\Phi(b) \geq 0$ for all $b$ sufficiently large. Thus by Corollary \ref{corollarymaximum},
\begin{equation}
\label{fincreasing}
\frac{f_k}{f_{k + 1}} \leq (\alpha_d - t_k) t_k^{d - 1} \leq 1
\end{equation}
for all $k$ sufficiently large.
\end{proof}

\begin{claim}
\label{claimtkgammad}
$t_k \to \gamma_d$ as $k\to\infty$.
\end{claim}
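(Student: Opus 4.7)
The plan is to exploit bounded monotone convergence of the sequence $(f_k)$ to force $t_k$ toward the unique maximizer of the function $f_d$ supplied by Corollary \ref{corollarymaximum}. First I would verify that $(f_k)$ is bounded above by $1$: since we have arranged $\max(\Delta_k) = A_k$, every $b_k^{(i)} \leq A_k$, so $f_k = \prod_{i=1}^d b_k^{(i)} / A_k^d \leq 1$. Combined with Claim \ref{claimincreasing}, the sequence $(f_k)_{k \geq k_1}$ is increasing and bounded above, hence converges to some limit $L \in [0,1]$. To check that $L > 0$, note that $b_k^{(i)} = A_{\ell(i,k)+1}$ and that each index $i$ is hit infinitely often by $(i_k)$, so $\ell(i,k) \to \infty$; together with $A_k \to \infty$ (part (II) of the data progression definition and our normalization $\max(\Delta_k) = A_k$), this gives $b_k^{(i)} \to \infty$ for every $i$, so after enlarging $k_1$ we may assume $f_{k_1} > 0$, whence $L \geq f_{k_1} > 0$. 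In particular $f_k / f_{k+1} \to 1$.

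Next I would take the limit inferior of \eqref{contradictionhypothesis7}. Because of \eqref{assumptionphi} and the identity \eqref{notation}, $\Phi(b) \to 0$ as $b\to\infty$, and thus $\Phi(A_k) \to 0$. The inequality \eqref{contradictionhypothesis6} together with the nonnegativity of $b_k^{(i_k)}$ gives $t_k \leq \alpha_d - \Phi(A_k) \leq \alpha_d$ for all large $k$, so $(t_k)$ eventually lies in the compact interval $[0,\alpha_d]$. Applying $\liminf$ to \eqref{contradictionhypothesis7} yields
\[
1 \;=\; \lim_{k \to \infty} \frac{f_k}{f_{k+1}} \;\leq\; \liminf_{k \to \infty} \bigl(\alpha_d - \Phi(A_k) - t_k\bigr) t_k^{d-1} \;=\; \liminf_{k \to \infty} f_d(t_k),
\]
with $f_d$ as in Corollary \ref{corollarymaximum}. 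On the other hand, the same corollary tells us that $f_d(t_k) \leq 1$ for every $k$, so $f_d(t_k) \to 1$.

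Finally, since $f_d$ is continuous on $[0,\alpha_d]$ with a \emph{unique} global maximum at $\gamma_d$ (value $1$), for each $\epsilon > 0$ there is $\delta > 0$ with $f_d(t) \leq 1 - \delta$ whenever $t \in [0,\alpha_d]$ and $|t - \gamma_d| \geq \epsilon$. A subsequence of $(t_k)$ staying $\epsilon$-away from $\gamma_d$ would thus violate $f_d(t_k) \to 1$, so $t_k \to \gamma_d$. The only subtle step is ruling out $L = 0$; once one knows that the $b_k^{(i)}$ themselves diverge (so $f_k$ is eventually positive), the rest is a straightforward chase through \eqref{contradictionhypothesis7} and Corollary \ref{corollarymaximum}.
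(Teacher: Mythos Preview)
Your argument follows essentially the same route as the paper's: bound $f_k\leq 1$, combine with Claim~\ref{claimincreasing} to get convergence of $(f_k)$ to a positive limit, deduce $f_k/f_{k+1}\to 1$, squeeze against \eqref{contradictionhypothesis7} (equivalently \eqref{fincreasing}) to force $f_d(t_k)\to 1$, and invoke the uniqueness of the maximizer in Corollary~\ref{corollarymaximum}. Your write-up simply supplies more of the details --- the explicit bound $t_k\in[0,\alpha_d]$, the compactness argument at the end, and the use of $\Phi(A_k)\to 0$ from \eqref{assumptionphi} --- that the paper leaves implicit. One caution: your assertion that ``each index $i$ is hit infinitely often by $(i_k)$'' is not part of Definition~\ref{definitiondataprogression} (which only requires $\{i_k:k\in\N\}=\{1,\ldots,d\}$), so your justification of $L>0$ is not quite airtight as stated; that said, the paper itself simply asserts convergence ``to a positive number'' with no argument at all, so you are at least attempting to close a gap the paper leaves open.
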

\begin{proof}
We clearly have $f_k\leq 1$ for all $k$, so by Claim \ref{claimincreasing}, the sequence $(f_k)_1^\infty$ converges to a positive number. Thus $\frac{f_k}{f_{k + 1}} \to 1$. Combining with \eqref{fincreasing}, we see that $(\alpha_d - t_k) t_k^{d - 1} \to 1$. Applying Corollary \ref{corollarymaximum} again, we get $t_k \to \gamma_d$.
\end{proof}

\begin{claim}
\label{claimrotating}
$\Delta$ is eventually periodic in the following sense: there exists a permutation $\sigma:\{1,\ldots,d\}\to\{1,\ldots,d\}$ such that for all $k$ sufficiently large,
\begin{equation}
\label{rotating}
i_k = \sigma(j_k) \text{ where }j_k = k \text{ (mod $d$)}.
\end{equation}
\end{claim}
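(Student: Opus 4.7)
The plan is to exploit the convergence $t_k \to \gamma_d$ together with the eventual monotonicity of $(f_k)$ to extract rigid integer data about the update pattern $k \mapsto i_k$. Since $b_k^{(i)} \leq A_k = \max(\Delta_k)$, we have $f_k \in (0,1]$, and Claim \ref{claimincreasing} then implies that $L := \lim_k f_k$ exists and is positive, so $f_{k+1}/f_k \to 1$.

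First I would compute the ratio $f_{k+1}/f_k$ explicitly. Using $b_{k+1}^{(i)} = b_k^{(i)}$ for $i \neq i_k$ and $b_{k+1}^{(i_k)} = A_{k+1} = t_k A_k$, a short calculation yields
\[
\frac{f_{k+1}}{f_k} = \frac{A_k}{t_k^{d-1}\, b_k^{(i_k)}}.
\]
Combining $f_{k+1}/f_k \to 1$ with Claim \ref{claimtkgammad} gives the crucial asymptotic
\[
\frac{b_k^{(i_k)}}{A_k} \longrightarrow \gamma_d^{-(d-1)} \quad \text{as } k \to \infty.
\]

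The main step is to translate this into integer data about the age $m_k := k - \ell(i_k,k)$ of the coordinate being updated at step $k$. By definition $b_k^{(i_k)} = A_{k - m_k + 1}$, and telescoping via $A_{j+1}/A_j = t_j$ gives
\[
\log\!\left(\frac{b_k^{(i_k)}}{A_k}\right) = -\sum_{j=k-m_k+1}^{k-1} \log t_j.
\]
Each summand converges to $\log\gamma_d > 0$ (this is where the hypothesis $d \geq 3$ enters, via $\gamma_d > 1$). An unbounded subsequence $m_k \to \infty$ would force this sum to diverge, contradicting the finite limit of the left side. Hence $(m_k)$ is bounded and, being integer-valued, attains only finitely many values; along any subsequence on which $m_k$ is constantly $n$, the left side converges to $-(n-1)\log\gamma_d$. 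Since the full limit is $-(d-1)\log\gamma_d$ and $\log\gamma_d > 0$, the only possibility is $m_k = d$ for all sufficiently large $k$.

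Finally, the identity $m_k = d$ is precisely the statement that $i_{k-d} = i_k$ and $i_j \neq i_k$ for $k-d < j < k$. Thus on the tail, the sequence $(i_k)$ has period $d$ and any $d$ consecutive values are pairwise distinct, hence form a permutation of $\{1,\ldots,d\}$; setting $\sigma(j) := i_k$ for any sufficiently large $k$ with $k \equiv j \pmod{d}$ produces the required permutation. The delicate point in the argument is the collapse from an \emph{asymptotic} equality to an exact integer equality in the middle step; it succeeds only because the discrete possibilities for $m_k$ correspond to limit values separated by the positive gap $\log\gamma_d$, so the target limit $-(d-1)\log\gamma_d$ can only be matched by the single integer $m_k = d$.
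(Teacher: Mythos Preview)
Your argument is correct and arrives at the same conclusion as the paper, but via a genuinely different (and in some ways cleaner) route.

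The paper's proof never computes the exact limit of $b_k^{(i_k)}/A_k$. Instead, it feeds the inequality \eqref{contradictionhypothesis6} together with Claim \ref{claimtkgammad} into
\[
\limsup_{k\to\infty} \frac{b_k^{(i_k)}}{A_k} \leq \alpha_d - \gamma_d = \gamma_d^{-(d-1)},
\]
which only gives a one-sided bound. Comparing with $A_{k-j}/A_k \to \gamma_d^{-j}$ for $j = 0,\ldots,d-2$, this rules out $\ell(i_k,k) \in \{k-1,\ldots,k-d+1\}$, i.e.\ $m_k \geq d$. From there the paper argues combinatorially: any $d$ consecutive $i_k$'s are distinct, hence two overlapping blocks of length $d$ force $i_k = i_{k+d}$.

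Your approach instead exploits the convergence $f_{k+1}/f_k \to 1$ (a consequence of Claims \ref{claimincreasing} and \ref{claimtkgammad}) via the exact identity $f_{k+1}/f_k = A_k/(t_k^{d-1} b_k^{(i_k)})$, yielding the two-sided limit $b_k^{(i_k)}/A_k \to \gamma_d^{-(d-1)}$. This pins down $m_k = d$ exactly, which simultaneously encodes both the periodicity $i_{k-d} = i_k$ and the distinctness of intermediate values; the combinatorial wrap-up is then immediate. The trade-off: the paper's proof uses the slightly more primitive input \eqref{contradictionhypothesis6}, whereas yours relies on the convergence of $(f_k)$, which in turn rests on Claim \ref{claimincreasing}. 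Both are valid, and your version makes the ``discrete rigidity'' mechanism (separation of the candidate limits by $\log\gamma_d > 0$) more transparent.
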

\begin{proof}
Combining \eqref{contradictionhypothesis6} and Claim \ref{claimtkgammad}, we see that
\[
\limsup_{k\to\infty} \frac{b_k^{(i_k)}}{A_k} \leq \alpha_d - \gamma_d = \gamma_d^{-(d - 1)}.
\]
On the other hand, for each $j = 0,\ldots,d - 2$, by Claim \ref{claimtkgammad} we have
\[
\lim_{k\to\infty} \frac{A_{k - j}}{A_k} = \gamma_d^{-j} > \gamma_d^{-(d - 1)}.
\]
It follows that $b_k^{(i_k)} = A_{\ell(i_k,k) + 1} \neq A_{k - j}$ for all $k$ sufficiently large. In particular $\ell(i_k,k) \neq k - j - 1$. Now fix $k_2\in\N$ such that for all $k\geq k_2$ and $j = 0,\ldots,d - 2$, we have $\ell(i_k,k) \neq k - j - 1$. Then $\ell(i_k,k) \leq k - d$, so $i_{k - j} \neq i_k$ for all $j = 1,\ldots,d - 1$. In particular, the sets
\[
\{i_k,\ldots,i_{k + d - 1}\} \text{ and } \{i_{k + 1},\ldots,i_{k + d}\}
\]
both contain $d$ distinct elements. It follows that $i_k = i_{k + d}$, so the sequence $(i_k)_{k\geq k_2}$ is periodic of period $d$. At this point, it is clear that \eqref{rotating} holds for some permutation $\sigma$.
\end{proof}

\begin{corollary}
\label{corollaryfk}
For all sufficiently large $k$,
\begin{equation}
\label{fk}
f_k = \prod_{j = 1}^{d - 1} \frac{A_{k - j}}{A_k} = \prod_{j = 1}^{d - 1}\frac{1}{t_{k - j}^{d - j}}\cdot
\end{equation}
\end{corollary}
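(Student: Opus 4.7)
The plan is to combine the eventual periodicity established in Claim \ref{claimrotating} with the recursive definition \eqref{bkirec} to pin down the multiset $\{b_k^{(i)} : i = 1,\ldots,d\}$ for large $k$, and then to rewrite the resulting expression in terms of the ratios $t_j$.

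First I would use Claim \ref{claimrotating} to observe that, for all $k$ sufficiently large, the map $k' \mapsto i_{k'}$ sending $\{k-d, k-d+1, \ldots, k-1\}$ into $\{1,\ldots,d\}$ is a bijection, with $i_{k-d} = i_k$ since $k - d \equiv k \pmod d$. Consequently, for each $i \in \{1,\ldots,d\}$ the quantity $\ell(i,k)$ is immediately determined: one has $\ell(i_k, k) = k - d$, while for $i \neq i_k$ the values $\ell(i,k)$ are a permutation of $\{k-d+1, \ldots, k-1\}$. Plugging into $b_k^{(i)} = A_{\ell(i,k)+1}$ then shows that, as a multiset,
\[
\{b_k^{(i)} : i = 1,\ldots,d\} = \{A_{k-d+1}, A_{k-d+2}, \ldots, A_k\}.
\]

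Taking the product and dividing by $A_k^d$ immediately yields
\[
f_k = \frac{\prod_{j=0}^{d-1} A_{k-j}}{A_k^d} = \prod_{j=1}^{d-1} \frac{A_{k-j}}{A_k},
\]
which is the first equality in \eqref{fk}. For the second equality, I would write each ratio $A_{k-j}/A_k$ as the telescoping product
\[
\frac{A_{k-j}}{A_k} = \prod_{\ell = k-j}^{k-1} \frac{A_\ell}{A_{\ell+1}} = \prod_{\ell=k-j}^{k-1} \frac{1}{t_\ell},
\]
substitute this into the product over $j$, and count the multiplicity with which each factor $1/t_{k-m}$ occurs: it appears exactly once in the inner product for every $j$ with $j \geq m$, that is, for $j = m, m+1, \ldots, d-1$, giving a total multiplicity of $d-m$. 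Reindexing with $j := m$ produces the second equality.

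This is essentially bookkeeping and I do not anticipate a genuine obstacle; the only care required is to verify the index arithmetic in the eventually periodic regime, and that verification is already handed to us by Claim \ref{claimrotating}. The substantive point is simply that periodicity forces the values $A_{k-d+1}, \ldots, A_k$ to be distributed one-to-one across the $d$ components $b_k^{(1)}, \ldots, b_k^{(d)}$.
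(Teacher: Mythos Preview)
Your argument is correct and follows essentially the same route as the paper: both use Claim \ref{claimrotating} to conclude that $\{i_{k-d},\ldots,i_{k-1}\}$ consists of $d$ distinct elements, whence $\{\ell(i,k):i=1,\ldots,d\}=\{k-d,\ldots,k-1\}$ and so $\prod_i b_k^{(i)}=\prod_{j=0}^{d-1}A_{k-j}$. The paper leaves the second equality implicit after dividing by $A_k^d$, whereas you spell out the telescoping and multiplicity count; this is additional detail, not a different approach.
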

\begin{proof}
Fix $k$ large enough such that the set $\{i_{k - d},\ldots,i_{k - 1}\}$ contains $d$ distinct elements; this is possible by Claim \ref{claimrotating}. It follows that
\[
\{\ell(i,k) : i = 1,\ldots,d\} = \{k - 1,\ldots,k - d\}
\]
and thus
\[
\prod_{i = 1}^d b_k^{(i)} = \prod_{i = 1}^d A_{\ell(i,k) + 1} = \prod_{j = 1}^d A_{k - j + 1}.
\]
Dividing both sides by $(A_k)^d$ finishes the proof.
\end{proof}

\begin{corollary}
\label{corollaryAktk}
For all $k$,
\begin{equation}
\label{Aktk}
A_k \gtrsim \gamma_d^k.
\end{equation}
\end{corollary}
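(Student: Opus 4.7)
The plan is to combine the closed form from Corollary \ref{corollaryfk}, the monotonicity from Claim \ref{claimincreasing}, and the convergence $t_k\to\gamma_d$ from Claim \ref{claimtkgammad} in order to pin down the exact value of $L:=\lim_{k\to\infty}f_k$, and then extract the lower bound on $A_k$ from there.

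First I would compute $L$ explicitly. Taking the logarithm of the formula $f_k=\prod_{j=1}^{d-1}t_{k-j}^{-(d-j)}$ and passing to the limit using Claim \ref{claimtkgammad} yields
\[
\log f_k\longrightarrow -\log\gamma_d\cdot\sum_{j=1}^{d-1}(d-j)=-\frac{d(d-1)}{2}\log\gamma_d,
\]
so $L=\gamma_d^{-d(d-1)/2}$. Crucially, Claim \ref{claimincreasing} then upgrades this limit to the one-sided bound $f_k\leq\gamma_d^{-d(d-1)/2}$ for every $k\geq k_1$, since an increasing sequence never exceeds its limit.

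Next I would repackage this as a multiplicative relation on the $A_k$. Using $f_k=\prod_{j=1}^{d-1}A_{k-j}/A_k^{d-1}$, the previous bound becomes $A_k^{d-1}\geq\gamma_d^{d(d-1)/2}\prod_{j=1}^{d-1}A_{k-j}$. Introducing $B_k:=A_k/\gamma_d^k$, the powers of $\gamma_d$ on both sides cancel exactly (one checks $(d-1)k=\sum_{j=1}^{d-1}(k-j)+d(d-1)/2$), and the inequality collapses into
\[
B_k\geq\left(\prod_{j=1}^{d-1}B_{k-j}\right)^{1/(d-1)}.
\]
In other words, $B_k$ is bounded below by the geometric mean of the previous $d-1$ terms.

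To conclude, I would observe that the geometric mean dominates the minimum, so $B_k\geq\min_{1\leq j\leq d-1}B_{k-j}$ for all $k\geq k_1$. A short check then shows that the running minimum $M_k:=\min(B_k,B_{k-1},\ldots,B_{k-d+2})$ is eventually non-decreasing, hence $M_k\geq M_{k_1+d-2}>0$ and therefore $B_k\gtrsim 1$ for all large $k$. Translating back gives $A_k\gtrsim\gamma_d^k$, and the finitely many small-$k$ cases are absorbed into the implied constant. The one point requiring care is the exact identification of $L$: any slack in that value would introduce uncanceled powers of $\gamma_d$ and spoil the reduction to the clean geometric-mean inequality.
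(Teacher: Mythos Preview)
Your argument is correct, and it shares its opening moves with the paper: both compute $L=\lim_k f_k=\gamma_d^{-\binom{d}{2}}$ via Claim~\ref{claimtkgammad} and then use the monotonicity from Claim~\ref{claimincreasing} to obtain the one-sided bound $f_k\le L$ for all large $k$. The divergence is in how that bound is cashed in. The paper multiplies the inequalities $f_\ell\le L$ over $\ell=k_3,\dots,k-1$, telescopes the resulting product $\prod_\ell\prod_j t_{\ell-j}^{-(d-j)}$ into a ratio of $A$'s, and finishes using the monotonicity $A_k\ge A_{k-j}$ to get $A_k^{\binom{d}{2}}\gtrsim\gamma_d^{k\binom{d}{2}}$ in one stroke. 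You instead normalize by setting $B_k=A_k/\gamma_d^k$, reduce a \emph{single} instance of $f_k\le L$ to the geometric-mean inequality $B_k\ge(\prod_j B_{k-j})^{1/(d-1)}$, and then run a short induction on the rolling minimum $M_k$.

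Both routes are clean. The paper's telescoping is shorter and exploits the monotonicity of $(A_k)$ directly; your normalization makes the underlying recursion more transparent and avoids having to track the exponents $(d-j)$ through the product. Your remark that the exact value of $L$ is what makes the $\gamma_d$-powers cancel is well taken; the paper uses the same cancellation, only hidden inside the telescoped product. One small cosmetic point: the geometric-mean bound and hence the $M_k$ monotonicity only kick in once both Corollary~\ref{corollaryfk} and $f_k\le L$ hold, so the base index for the running minimum should be chosen accordingly; you handle this implicitly by absorbing early terms into the implied constant.
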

\begin{proof}
By Claim \ref{claimtkgammad},
\[
f_k \tendsto k \prod_{j = 1}^{d - 1} \frac{1}{\gamma_d^{d - j}} = \gamma_d^{-\binom d2}.
\]
By Claim \ref{claimincreasing}, it follows that $f_k \leq \gamma_d^{-\binom d2}$ for all $k$ sufficiently large. Let $k_3$ be large enough so that \eqref{fk} holds for all $k\geq k_3$; then
\[
\gamma_d^{-k\binom d2} \gtrsim \prod_{\ell = k_3}^{k - 1} f_\ell = \prod_{j = 1}^{d - 1}\prod_{\ell = k_3}^{k - 1}\frac{1}{t_{\ell - j}^{d - j}} = \prod_{j = 1}^{d - 1}\left(\frac{A_{k_3 - j}}{A_{k - j}}\right)^{d - j},
\]
and thus
\[
A_k^{\binom d2} \geq \prod_{j = 1}^{d - 1} (A_{k - j})^{d - j} \gtrsim \gamma_d^{k\binom d2}.
\]
Taking $\binom d2$th roots completes the proof.
\end{proof}

Using Corollary \ref{corollaryfk}, \eqref{contradictionhypothesis7} becomes
\[
\prod_{j = 1}^{d - 1}\frac{t_k}{t_{k - j}} \leq (\alpha_d - \Phi(A_k) - t_k)t_k^{d - 1},
\]
or equivalently
\[
t_k \leq \alpha_d - \Phi(A_k) - \prod_{j = 1}^{d - 1}\frac{1}{t_{k - j}}\cdot
\]
Writing $s_k = t_k/\gamma_d - 1$, a few arithmetic calculations show that the above inequality is equivalent to
\begin{equation}
\label{contradictionhypothesis8}
s_k \leq \frac{1}{d - 1}\left[1 - \prod_{j = 1}^{d - 1}\frac{1}{1 + s_{k - j}}\right] - \frac{\Phi(A_k)}{\gamma_d}\cdot
\end{equation}
%Until now, the operations we have done have been invertible, i.e. \eqref{contradictionhypothesis8} together with appropriate information about $\Delta$ implies \eqref{contradictionhypothesis5}. Now we cease invertibility.
\ignore{
Letting $m$ be large enough so that $\gamma_d^m$ is greater than the implied constant of \eqref{Aktk}, we have
\begin{equation}
\label{contradictionhypothesis9}
s_k \leq \frac{1}{d - 1}\left[1 - \prod_{j = 1}^{d - 1}\frac{1}{1 + s_{k - j}}\right] - \frac{\Phi(\gamma_d^{k - m})}{\gamma_d}
\end{equation}
for all $k$ sufficiently large.}% end ignore
Consequently, it becomes important to study behavior the function
\[
f(x_1,\ldots,x_{d - 1}) = 1 - \prod_{j = 1}^{d - 1}\frac{1}{1 + x_j}
\]
near the origin. We calculate the gradient and Hessian of $f$ at $\0$:
\begin{align*}
f'(\0) &= \sum_{j = 1}^{d - 1} \ee_j\\
f''(\0) &= -\left[\sum_{j = 1}^{d - 1} \ee_j^2 + \left(\sum_{j = 1}^{d - 1}\ee_j\right)^2\right]
\end{align*}
Since $f(\0) = 0$, this means that $f$ can be estimated in a neighborhood of the origin by the formula
\begin{equation}
\label{fapprox}
f(\xx) = \sum_{j = 1}^{d - 1} x_j - \frac12\left[\sum_{j = 1}^{d - 1} x_j^2 + \left(\sum_{j = 1}^{d - 1}x_j\right)^2\right] + O(\|\xx\|^3).
\end{equation}
In fact, we can be explicit: \eqref{fapprox} holds whenever $\|\xx\| \leq 1/2$.

%To make this precise, fix $\epsilon,C > 0$ such that
%\[
%\left| f(\xx) - \left[ \sum_{j = 1}^{d - 1} x_j - \frac12\left[\sum_{j = 1}^{d - 1} x_j^2 + \left(\sum_{j = 1}^{d - 1}x_j\right)^2\right] \right]\right| \leq C\|\xx\|^3 \all \|\xx\|\leq\epsilon.
%\]
Continuing with the proof, for $k\in\N$ let
\[
\phi_k = \frac{2}{d\gamma_d}\Phi(A_k)
\]
(cf. \eqref{notation}).

\begin{claim}
\label{claimphiprime}
For all $k$ sufficiently large,
\[
|\phi_{k + 1} - \phi_k| \lesssim \frac{1}{k^3}\cdot
\]
\end{claim}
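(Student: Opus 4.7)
The strategy is to recognize $\phi_k$ as a sample of the function $f_\psi$ and then apply the mean value theorem, reducing the statement to a derivative estimate on $f_\psi$. Define $x_k := \log(A_k)/\log(\gamma_d)$. The definitions of $\phi_k$ and $f_\psi$ immediately give $\phi_k = f_\psi(x_k)$. By Claim \ref{claimtkgammad}, the increments $x_{k+1}-x_k = \log(t_k)/\log(\gamma_d)$ tend to $1$, hence are bounded above by some constant; together with Corollary \ref{corollaryAktk} (which gives $x_k \gtrsim k$) and the trivial upper bound coming from bounded increments, one obtains $x_k \asymp k$.

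By the mean value theorem,
\[
|\phi_{k+1}-\phi_k| \;\leq\; |x_{k+1}-x_k|\cdot\sup_{t\in[x_k,x_{k+1}]}|f_\psi'(t)|,
\]
so it suffices to show that $|f_\psi'(x)| \lesssim 1/x^3$ for all sufficiently large $x$. This is where the hypothesis $\psi\in\HH$ becomes crucial. Since $\HH$ contains $\exp$ and $\log$ and is closed under composition, $f_\psi\in\HH$, and hence $f_\psi'$ and $x^3 f_\psi'(x)$ also belong to $\HH$. Elements of a Hardy field are eventually monotonic and possess a limit in $[-\infty,+\infty]$, so $L := \lim_{x\to\infty} x^3 f_\psi'(x)$ exists. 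I would rule out the infinite values of $L$ using the assumption $1/(4x^2)\leq f_\psi(x)\leq 1/x^2$ from \eqref{assumptionphi}: if $L=-\infty$, then $f_\psi'(x)\leq -M/x^3$ for arbitrarily large $M$ and all $x$ large enough, and integrating from $x$ to $\infty$ (with $f_\psi(x)\to 0$) would give $f_\psi(x) \geq M/(2x^2)$, contradicting the upper bound for $M\geq 3$; and if $L=+\infty$, then $f_\psi$ would eventually be increasing, contradicting that a positive Hardy-field element tending to $0$ is eventually decreasing. Hence $L$ is finite, i.e., $|f_\psi'(x)| \lesssim 1/x^3$.

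Combining these with $x_k \asymp k$ and $|x_{k+1}-x_k|=O(1)$ yields $|\phi_{k+1}-\phi_k|\lesssim 1/k^3$ as required. The main obstacle is the derivative estimate $|f_\psi'(x)|\lesssim 1/x^3$; once that Hardy-field input is in hand, everything else is bookkeeping with the asymptotic information already established in the preceding claims.
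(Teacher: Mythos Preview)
Your proof is correct and follows essentially the same route as the paper: write $\phi_k = f_\psi(x_k)$ with $x_k = \log_{\gamma_d}(A_k)$, use the mean value theorem (the paper phrases it as the fundamental theorem of calculus), and appeal to Claim~\ref{claimtkgammad} and Corollary~\ref{corollaryAktk} to bound $x_{k+1}-x_k$ and $x_k$. The only noteworthy difference is in how you obtain the key derivative estimate $|f_\psi'(x)| \lesssim 1/x^3$: the paper invokes Lemma~\ref{lemmahardy2} (which says that in a Hardy field one may differentiate the inequality $0\le f_\psi(x)\le 1/x^2$ with both sides tending to~$0$) to get the explicit bound $|f_\psi'(x)|\le 2/x^3$, whereas you argue by showing $x^3 f_\psi'(x)$ has a finite limit and ruling out $\pm\infty$ via integration. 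Both are valid Hardy-field arguments; the paper's is slightly more direct since the lemma is already on hand, while yours is self-contained. (Minor remark: you only need the lower bound $x_k\gtrsim k$, not the full asymptotic $x_k\asymp k$, since the supremum of $|f_\psi'|$ over $[x_k,x_{k+1}]$ is controlled by $1/x_k^3$.)
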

\begin{proof}
Since $f_\psi\in\HH$, we may differentiate the inequalities \eqref{assumptionphi} (cf. Lemma \ref{lemmahardy2}) to get
\begin{equation}
\label{phiprime}
|f_\psi'(x)| \leq \left|\frac{\dee}{\dee x}\left[\frac{1}{x^2}\right]\right| = \frac{2}{x^3} \text{ for all $x$ sufficiently large.}
\end{equation}
Using \eqref{notation} and applying the fundamental theorem of calculus, we have
\begin{align*}
|\phi_{k + 1} - \phi_k|
&= |f_\psi(\log_{\gamma_d}(A_{k + 1})) - f_\psi(\log_{\gamma_d}(A_k))| \noreason\\
&\leq \frac{2}{\log_{\gamma_d}^3(A_k)} \log_{\gamma_d}(t_k) \noreason\\
&\lesssim \frac{1}{k^3}\cdot \by{Claim \ref{claimtkgammad} and Corollary \ref{corollaryAktk}}
&\qedhere\end{align*}
\end{proof}

Now fix $C_1 > 0$ large to be determined, then fix $\delta > 0$ small to be determined (possibly depending on $C_1$), and finally fix $k_0\in\N$ large to be determined (possibly depending on both $\delta$ and $C_1$). Let $(S_k)_{k = k_0}^\infty$ be the unique sequence defined by the equations
\begin{equation}
\label{Skdef}
S_{k + 1} = S_k - S_k^2 - \phi_k + \frac{C_1}{k^3} + C_1 |S_k|^3, \;\;\; S_{k_0} = \delta.
\end{equation}

The following claim is the heart of the proof:

\begin{claim}
\label{claimtkbound}
If $k_0$ and $C_1$ are sufficiently large and $\delta$ is sufficiently small (with $k_0$ allowed to depend on $\delta$, which is in turn allowed to depend on $C_1$), then
\begin{equation}
\label{tkbound}
-\frac 1{\max(2,C_1)} \leq s_k \leq S_k \leq \delta \leq \frac 1{\max(2,C_1)}
\end{equation}
for all $k\geq k_0$.
\end{claim}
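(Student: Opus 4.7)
The plan is to prove the four inequalities in \eqref{tkbound} by strong induction on $k\geq k_0$. The trivial pieces come first: fixing $C_1$, then $\delta\leq 1/\max(2,C_1)$ takes care of the outermost comparison, and the lower bound $s_k\geq -1/\max(2,C_1)$ follows immediately from $s_k\to 0$ (Claim~\ref{claimtkgammad}) by choosing $k_0$ large. A one-step analysis of \eqref{Skdef}, using that $\phi_k\asymp 1/k^2$ (from \eqref{assumptionphi} and Corollary~\ref{corollaryAktk}), shows that for $\delta$ small and $k_0$ large the sequence $(S_k)_{k\geq k_0}$ stays in $[0,\delta]$ and in fact satisfies $S_k\asymp 1/k$; this handles the upper bound $S_k\leq\delta$ and produces the smoothness estimate $|S_{k-j}-S_{k-1}|\lesssim 1/k^2$ for fixed $j\leq d-1$, which will be crucial below.

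The main content is the inductive comparison $s_k\leq S_k$. For the base cases $k_0\leq k<k_0+d-1$, Claim~\ref{claimtkgammad} ensures $|s_k|\leq\delta$ for $k_0$ large enough. For the inductive step, assume $s_{k-j}\leq S_{k-j}$ and $|s_{k-j}|\leq 1/2$ for $j=1,\ldots,d-1$. Substituting \eqref{fapprox} into \eqref{contradictionhypothesis8} gives
\[
s_k \leq \frac{1}{d-1}\sum_{j=1}^{d-1} s_{k-j} - \frac{1}{2(d-1)}\Biggl[\sum_{j=1}^{d-1} s_{k-j}^2 + \Bigl(\sum_{j=1}^{d-1} s_{k-j}\Bigr)^2\Biggr] + O\!\Bigl(\max_{j}|s_{k-j}|^3\Bigr) - \frac{d}{2}\phi_k.
\]
A direct partial-derivative computation shows that this right-hand side is nondecreasing in each $s_{k-j}$ on the range in question, so replacing each $s_{k-j}$ by its majorant $S_{k-j}$ only enlarges it. Collapsing the various $S_{k-j}$ to a common $S_{k-1}$ via the smoothness $|S_{k-j}-S_{k-1}|\lesssim 1/k^2$, this becomes
\[
s_k \leq S_{k-1} - \tfrac{d}{2} S_{k-1}^2 - \tfrac{d}{2}\phi_k + O(S_{k-1}^3) + O(1/k^3).
\]
Since $d\geq 3$ we have $\tfrac{d}{2}\geq \tfrac{3}{2}$, so $-\tfrac{d}{2}S_{k-1}^2\leq -S_{k-1}^2$, and Claim~\ref{claimphiprime} gives $\phi_k=\phi_{k-1}+O(1/k^3)$, so also $-\tfrac{d}{2}\phi_k\leq -\phi_{k-1}+O(1/k^3)$. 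Choosing $C_1$ large enough to absorb every implied constant therefore yields
\[
s_k \leq S_{k-1}-S_{k-1}^2-\phi_{k-1}+\frac{C_1}{(k-1)^3}+C_1|S_{k-1}|^3 = S_k,
\]
completing the induction.

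The main obstacle is the mismatch between the $(d-1)$-step recursion satisfied by $(s_k)$ and the one-step recursion \eqref{Skdef} for $(S_k)$: the monotonic substitution $s_{k-j}\mapsto S_{k-j}$ is cheap, but collapsing the various $S_{k-j}$ to a common $S_{k-1}$ introduces an $O(1/k^2)$ error per coordinate, and after squaring this becomes the $O(1/k^3)$ discrepancy that the $C_1/k^3$ correction in \eqref{Skdef} is specifically engineered to absorb. Verifying that absorption cleanly, together with the role of $C_1|S_{k-1}|^3$ in absorbing the cubic Taylor remainder, is where the bulk of the bookkeeping lies; the key structural fact that makes all the signs work out is the slack $d\geq 3$, which leaves the coefficients $\tfrac{d}{2}$ strictly larger than the $1$'s appearing in \eqref{Skdef}.
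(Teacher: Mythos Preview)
Your argument has a genuine gap in the ``collapsing'' step. You assert that replacing each $S_{k-j}$ by $S_{k-1}$ yields
\[
s_k \leq S_{k-1} - \tfrac{d}{2} S_{k-1}^2 - \tfrac{d}{2}\phi_k + O(S_{k-1}^3) + O(1/k^3),
\]
but this is false: the collapse of the \emph{linear} term $\frac{1}{d-1}\sum_{j=1}^{d-1} S_{k-j}$ incurs an error of order $1/k^2$, not $1/k^3$. Indeed, since $S_{m} - S_{m+1} = S_m^2 + \phi_m + O(1/k^3)$ by \eqref{Skdef}, one finds
\[
\frac{1}{d-1}\sum_{j=1}^{d-1} S_{k-j} = S_{k-1} + \frac{d-2}{2}\bigl[S_{k-1}^2 + \phi_{k-1}\bigr] + O(1/k^3),
\]
and this positive $\frac{d-2}{2}[S_{k-1}^2 + \phi_{k-1}]$ exactly cancels your claimed slack: the net coefficient of $S_{k-1}^2$ is $-1$, not $-d/2$, and likewise for $\phi$. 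So there is no margin coming from $d\geq 3$; the quadratic and $\phi$ coefficients match those in \eqref{Skdef} \emph{identically}, and it is only the $C_1$-terms that provide the room needed to close the induction. Your final paragraph, which names ``the slack $d\geq 3$'' as the key structural fact, therefore misidentifies the mechanism entirely.

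The paper's proof does this bookkeeping carefully: after replacing $s_{k-j}$ by $S_{k-j}$ via the monotonicity of $f$ (as you do), it expands $\sum_{j=1}^{d-1}(S_{\ell-j} - S_\ell)$ term by term using \eqref{Skdef}, obtaining $\binom{d}{2}[S_\ell^2 + \phi_\ell - C_1(1/\ell^3 + |S_\ell|^3)]$ up to negligible cubic remainders; combined with the quadratic part of the Taylor expansion this leaves $s_\ell - S_\ell \sim -\frac{d}{2}C_1 a_\ell^3$, and $C_1$ is then chosen to dominate the implied constant in $\sim$. A smaller issue: your base case is too weak as stated, since $|s_k|\leq\delta$ does not give $s_k\leq S_k$ for $k_0 < k < k_0 + d-1$ (where $S_k < \delta$); the paper handles this by comparing $S_{k_0+j}$ to the strictly positive limit sequence $T_j$ obtained by sending $k_0\to\infty$ in the recursion, and then choosing $k_0$ large enough that $s_{k_0+j}$ falls below $T_j$.
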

\begin{proof}
Throughout the proof, we will assume that $\delta < 1/\max(2,C_1)$ and that $k_0\geq 4C_1$. Since $\delta$ and $k_0$ are both allowed to depend on $C_1$, these assumptions are justified. In particular, the rightmost inequality of \eqref{tkbound} requires no proof.

By Claim \ref{claimtkgammad}, we have $s_k\to 0$. Thus, the leftmost inequality of \eqref{tkbound} can be achieved simply by an appropriate choice of $k_0$.

The proof of the two middle inequalities of \eqref{tkbound} is by strong induction on $k$. \\

{\bf Base Case:} $k = k_0,\ldots,k_0 + d - 2$. For this part of the proof, we'll think of $C_1,\delta > 0$ as being fixed. Define the sequence $(T_j)_{j = 0}^{d - 2}$ via the formula
\[
T_{j + 1} = T_j - T_j^2 + C_1 |T_j|^3, \;\; T_0 = \delta.
\]
Since $\delta < 1/\max(2,C_1)$, the sequence $(T_j)_{j = 0}^{d - 2}$ is strictly decreasing and strictly positive. Note that for each $j = 0,\ldots,d - 2$,
\[
S_{k_0 + j}^{(k_0)} \tendsto{k_0} T_j,
\]
where the superscript of $k_0$ is merely making explicit the fact that the sequence $(S_k)_{k\geq k_0}$ depends on $k_0$. On the other hand,
\[
s_{k_0 + j} \tendsto{k_0} 0 < T_j.
\]
So if $k_0$ is sufficiently large, then \eqref{tkbound} holds for $k = k_0 + j$. \\

{\bf Inductive Step:} Fix $\ell\geq k_0 + d - 1$, and suppose that that \eqref{tkbound} holds for $k = \ell - d + 1,\ldots,\ell - 1$. We claim that \eqref{tkbound} holds for $k = \ell$.

\begin{subclaim}
\label{subclaimdecreasing}
For $j = 1,\ldots,d - 1$,
\[
S_{\ell - j + 1} \leq S_{\ell - j}.
\]
\end{subclaim}
\begin{subproof}
By \eqref{assumptionphi}, we have
\[
\phi_k \geq \frac1{4k^2}\cdot
\]
Since $k_0\geq 4C_1$, combining with \eqref{Skdef} gives
\begin{equation}
\label{strongdecreasing}
S_{k + 1} \leq S_k - S_k^2 + C_1 |S_k|^3 \all k\geq k_0.
\end{equation}
Plugging in $k = \ell - j$, we have $|S_k| \leq 1/C_1$ by the induction hypothesis. Thus $S_{k + 1} \leq S_k$.
\end{subproof}

In particular, plugging in $j = 1$ and using the induction hypothesis, we see that the third inequality of \eqref{tkbound} holds for $k = \ell$. So to complete the proof, we need only to demonstrate that the second inequality of \eqref{tkbound} holds for $k = \ell$.

\begin{subclaim}
\label{subclaimignorable1}
For $j = 1,\ldots,d - 1$,
\begin{align*}
|S_{\ell - j}| &\lesssim \max(1/\ell^2,|S_{\ell - j + 1}|)\\
|S_{\ell - j + 1}| &\lesssim \max(1/\ell^2,|S_{\ell - j}|)\\
\end{align*}
\end{subclaim}
\begin{remark*}
We emphasize that here and below, the implied constants of asymptotics may not depend on $C_1$, $\delta$, or $k_0$.
\end{remark*}
\begin{subproof}
By \eqref{assumptionphi}, we have
\[
\phi_k \leq \frac1{k^2}\cdot
\]
On the other hand, since $k_0\geq C_1$ we have $C_1/k^3 \leq 1/k^2$ for all $k\geq k_0$. Letting $k = \ell - j$, combining with \eqref{Skdef}, and writing $x = S_{\ell - j}$, $y = S_{\ell - j + 1}$, we have
\[
\left|x - x^2 + C_1|x|^3 - y\right| \lesssim \frac{1}{(\ell - j)^2} \asymp \frac{1}{\ell^2}\cdot
\]
By the induction hypothesis, we have
\begin{equation}
\label{xbounds}
|x|\leq 1/\max(2,C_1).
\end{equation}
It follows that
\[
|y| \lesssim \max(1/\ell^2, |x - x^2 + C_1|x|^3|) \lesssim \max(1/\ell^2,|x|).
\]
On the other hand, \eqref{xbounds} also implies that $x - x^2 + C_1|x|^3 \leq x$. In particular, if $x$ is negative then
\[
|x| \leq \left|x - x^2 + C_1|x|^3\right| \lesssim \max(1/\ell^2,|y|).
\]
Finally, if $x$ is positive, then we have
\[
|x| = x \asymp x - x^2 \leq x - x^2 + C_1|x|^3 \lesssim \max(1/\ell^2,|y|).
\]
\end{subproof}

\begin{subclaim}
\label{subclaimignorable2}
Let
\[
a_\ell = \max\left(\frac{1}{\ell},|S_\ell|\right).
\]
Then $a_\ell \lesssim 1/C_1$.
\end{subclaim}
\begin{subproof}
Since $\ell \geq k_0 \geq C_1$, we have $1/\ell \leq 1/C_1$. On the other hand, by Subclaim \ref{subclaimignorable1} and the induction hypothesis we have
\[
|S_\ell| \lesssim \max\left(\frac{1}{\ell^2},|S_{\ell - 1}|\right) \leq \frac{1}{C_1}\cdot
\]
\end{subproof}

\begin{definition}
\label{definitionignorable}
For the purposes of this proof, an expression will be called \emph{negligible} if its absolute value is less than a constant times $a_\ell^3$. (The constant must be independent of $C_1$, $\delta$, and $k_0$.) We'll write $A\sim B$ if the difference between two expressions $A$ and $B$ is negligible.
\end{definition}

Note that by Subclaim \ref{subclaimignorable1}, we have $|S_{\ell - j}| \lesssim a_\ell$ for all $j = 0,\ldots,d - 1$. It follows from this and \eqref{Skdef} (keeping in mind Subclaim \ref{subclaimignorable2} and Claim \ref{claimphiprime}) that $|S_{\ell - j + 1} - S_{\ell - j}| \lesssim a_\ell^2$, and thus that
\[
S_{\ell - j_1} (S_{\ell - j_2} - S_{\ell - j_2 + 1}) \sim 0
\]
for all $j_1 = 0,\ldots,d - 1$ and $j_2 = 1,\ldots,d - 1$. It follows that
\[
S_{\ell - j_1} S_{\ell - j_2} \sim S_\ell^2
\]
for all $j_1,j_2 = 0,\ldots,d - 1$.

We are now ready to continue our calculation:

\label{calculations1}
\begin{align*}
s_\ell
&\leq \frac{1}{d - 1}f(S_{\ell - d + 1},\ldots,S_{\ell - 1}) - \frac{d}{2} \phi_\ell \by{\eqref{contradictionhypothesis8}}\\
&\sim \frac{1}{d - 1}\left[\sum_{j = 1}^{d - 1} S_{\ell - j} - \frac12 \left[\sum_{j = 1}^{d - 1} S_\ell^2 + \left(\sum_{j = 1}^{d - 1} S_\ell\right)^2\right]\right] - \frac{d}{2} \phi_\ell \by{\eqref{fapprox}}\\
&= \frac{1}{d - 1}\left[\sum_{j = 1}^{d - 1} S_{\ell - j} - \binom d2 S_\ell^2\right] - \frac{d}{2} \phi_\ell\\
\end{align*}
\begin{align*}
\sum_{j = 1}^{d - 1} [S_{\ell - j} - S_\ell]
&= \sum_{j = 1}^{d - 1}\sum_{i = 1}^j \left[S_{\ell - i}^2 +  \phi_{\ell - i} - C_1 \left[\frac{1}{(\ell - i)^3} + S_{\ell - i}^3\right]\right] \by{\eqref{Skdef}}\\
&\sim \sum_{j = 1}^{d - 1}\sum_{i = 1}^j \left[S_\ell^2 +  \phi_\ell - C_1 \left[\frac{1}{\ell^3} + |S_\ell|^3\right]\right]\\
&= \binom d2 \left[S_\ell^2 +  \phi_\ell - C_1 \left[\frac{1}{\ell^3} + |S_\ell|^3\right]\right]\\
s_\ell - S_\ell
&\leq \frac{1}{d - 1}f(S_{\ell - d + 1},\ldots,S_{\ell - 1}) - \frac{d}{2} \phi_\ell - S_\ell\\
&\sim \frac d2\left[ \phi_\ell - C_1 \left[\frac{1}{\ell^3} + |S_\ell|^3\right]\right] - \frac{d}{2} \phi_\ell\\
&= -\frac d2 C_1 \left[\frac{1}{\ell^3} + |S_\ell|^3\right] \leq - \frac d2 C_1 a_\ell^3
\end{align*}
\label{calculations2}

By the definition of negligibility, we have
\[
s_\ell - S_\ell \leq C_2 a_\ell^3 - \frac d2 C_1 a_\ell^3
\]
for some constant $C_2$ independent of $C_1$, $\delta$, and $k_0$. By letting $C_1 = (2/d)C_2$, we have $s_\ell \leq S_\ell$, completing the proof.
\end{proof}

Having finished the proof of Claim \ref{claimtkbound}, we continue with the proof of Proposition \ref{propositionHmax} (D) \implies (B). Since $S_k\geq s_k\to 0$ and since the sequence $(S_k)_{k\geq k_0}$ is decreasing by Subclaim \ref{subclaimdecreasing}, we have $S_k \geq 0$ for all $k\geq k_0$. The proof of Proposition \ref{propositionintegraltest} (C1) \implies (A) now shows that there exists $C_3 > 0$ such that $S_k \leq C_3/k$ for all $k\geq k_0$ (cf. \eqref{Sklowerbound}). Combining with \eqref{tkbound}, we see that

%Thus the nonnegative sequence $(S_k)_{k\geq k_0}$ satisfies \eqref{C1} with $t = -C_1$ and $f(x) = \phi_x + C_1/x^3$. [Not exactly - $\phi_x$ is not defined if $x\not\in\N$\internal] By Proposition \ref{propositionintegraltest}, it follows that the recursive series
%\[
%\sum_{k = 1}^\infty\rec \left[\phi_k + \frac{C_1}{k^3}\right]
%\]
%converges. Since the second part is negligible, the recursive series $\sum_{k = m + 1}^\infty\rec\phi_k$ also converges. To compare this series with \eqref{dirichletseriesv2}, we use the inequalities \eqref{tkbound} and \eqref{SkC3k} to deduce that
\begin{align*}
A_k
= A_{k_0}\prod_{\ell = k_0}^{k - 1} \gamma_d(1 + s_\ell)
\leq A_{k_0}\gamma_d^{k - k_0} \prod_{\ell = k_0}^{k - 1} (1 + C_3/\ell)
= A_{k_0} \gamma_d^{k - k_0}\prod_{\ell = k_0}^{k - 1} \frac{\ell + C_3}{\ell}
\leq C_4 \gamma_d^k k^n,
\end{align*}
where $n = \lceil C_3\rceil$ and $C_4 > 0$. So for all sufficiently large $k$,
\[
\phi_k \geq \frac{2}{d\gamma_d} \Phi(C_4 \gamma_d^k k^n).
\]
Applying the fundamental theorem of calculus to \eqref{phiprime} gives
\begin{align*}
f_\psi(k) - \phi_k
&\leq f_\psi(k) -  \frac{2}{d\gamma_d}\Phi(C_4 \gamma_d^k k^n)\\
&= f_\psi(k) - f_\psi\left(k + \log_{\gamma_d}(C_4k^n)\right)\\
&\leq \frac{2}{k^3} \log_{\gamma_d}(C_4k^n)
\asymp \frac{\log(k)}{k^3}\cdot
\end{align*}
Let $C_5 > 0$ be the implied constant. Combining with \eqref{Skdef} shows that
\[
S_k - S_{k + 1} \geq S_k^2 - C_1 S_k^3 + f_\psi(k) - \frac{C_1}{k^3} - \frac{C_5 \log(k)}{k^3}
\]
for all sufficiently large $k$. By Proposition \ref{propositionintegraltest}, the function
\[
x\mapsto f_\psi(x) - \frac{C_1}{x^3} - \frac{C_5 \log(x)}{x^3}
\]
is recursively integrable. By Lemma \ref{lemmaignorable}, it follows that $f_\psi\in\RR$.

\subsection{Proof of (C) \implies (D)}
As before, we will prove the contrapositive. Suppose that $f_\psi\in\RR$, and we will show that $\sup_{\R^d\butnot\Q^d} C_{\Hmax,\psi} > 0$. Fix $C_1 > 0$ large to be determined. By Lemma \ref{lemmaignorable}, the function $x\mapsto f_\psi(x) + C_1/x^3$ is recursively integrable. Thus by Proposition \ref{propositionintegraltest}, there exists a nonnegative sequence $(S_k)_{k\geq k_0}$ satisfying
\begin{equation}
\label{skdef}
S_{k + 1} = S_k - S_k^2 - C_1S_k^3 - f_\psi(k) - \frac{C_1}{k^3} \all k\geq k_0.
\end{equation}
For $k\geq k_0$, let $s_k = S_k$, $t_k = \gamma_d(1 + s_k)$, and
\[
A_k = \gamma_d^{k_0} \prod_{j = k_0}^{k - 1} t_j = \gamma_d^k \prod_{j = k_0}^{k - 1}(1 + s_j).
\]
Let $i_k = k$ (mod $d$), and consider the $d$-dimensional data progression $\Delta = (A_k,i_k)_{k = k_0}^\infty$. Since the sequence $(A_k)_{k_0}^\infty$ is increasing, Remark \ref{remarkreverseimplication} applies and we have the implication \eqref{contradictionhypothesis8} \implies \eqref{contradictionhypothesis5}. Note that if \eqref{contradictionhypothesis5} holds for all $k$ sufficiently large, then we are done, as $C_{\tmax,\Psi}(\Delta) \geq 0$ and then Lemma \ref{lemmacondition} completes the proof.

Let us proceed to demonstrate \eqref{contradictionhypothesis8}. We begin by reproving Subclaims \ref{subclaimdecreasing}, \ref{subclaimignorable1}, and \ref{subclaimignorable2} in our new context. Fix $k\in\N$. The inequality $S_{k + 1} \leq S_k$ is immediate from \eqref{skdef}. If $k$ is sufficiently large, then $f_\psi(k)\leq 1/k^2$, $k\geq C_1$, and $S_k\leq 1/C_1$, so
\[
S_k - 2S_k^2 \leq S_{k + 1} + \frac{2}{k^2}\cdot
\]
This implies that $S_k \lesssim \max(1/k^2,S_{k + 1})$, completing the proof of the analogue of Subclaim \ref{subclaimignorable1}. Finally, let $a_k = \max(1/k,S_k)$; it is immediate that $a_k \leq 1/C_1$ if $k$ is sufficiently large.

As in the proof of Claim \ref{claimtkbound} we call an expression $A$ \emph{negligible} if $|A| \lesssim a_k^3$, and write $A\sim B$ if $A - B$ is negligible. The argument following Definition \ref{definitionignorable} shows that $S_{k - j_1} S_{k - j_2} \sim S_k^2$ for all $j_1,j_2 = 0,\ldots,d - 1$. Finally, the calculations on pages \pageref{calculations1}-\pageref{calculations2} can be modified to show that
\[
\frac{1}{d - 1}f(s_{k - d + 1},\ldots,s_{k - 1}) - \frac{d}{2}f_\psi(k) - s_k \sim \frac{d}{2} C_1 \left[\frac{1}{k^3} + S_k^3\right] \geq \frac{d}{2} C_1 a_k^3.
\]
(Just multiply $C_1$ by $-1$ in each corresponding expression, and use $f_\psi(k)$ in place of $\phi_k$.) By the definition of negligibility, we have
\[
\frac{1}{d - 1}f(s_{k - d + 1},\ldots,s_{k - 1}) - \frac{d}{2}f_\psi(k) - s_k \geq \frac{d}{2}C_1 a_k^3 - C_2 a_k^3
\]
for some constant $C_2 > 0$ independent of $C_1$. Letting $C_1 = (2/d)C_2$, we have
\[
s_k \leq \frac{1}{d - 1}f(s_{k - d + 1},\ldots,s_{k - 1}) - \frac{d}{2}f_\psi(k).
\]
But since $A_k \geq \gamma_d^k$, we have $f_\psi(k) \geq \frac{2}{d\gamma_d}\Phi(A_k)$ for all sufficiently large $k$. Combining this inequality with the equation on the previous line gives \eqref{contradictionhypothesis8}, completing the proof.

\subsection{Completion of the proof of Proposition \ref{propositionHmax}}
Using the implications (C) \implies (D) \implies (B), we now complete the proof of Proposition \ref{propositionHmax}. As the implications (C) \implies (B) \implies (A) are obvious, it suffices to prove that (A) \implies (D) \implies (C). Let
\begin{align*}
\phi(q) &= q^{1/\log^3\log(q)}\\
g_\phi(x) &= \frac{2}{d\gamma_d} \frac{\log\phi(e^{\gamma_d^x})}{\gamma_d^x} = \frac{2}{d\gamma_d\log^3(\gamma_d)}\frac{1}{x^3} ,
\end{align*}
so that
\begin{align*}
f_{\phi\psi} &= f_\psi + g_\phi\\
f_{\psi/\phi} &= f_\psi - g_\phi.
\end{align*}
Since the function $g_\phi$ is ignorable, we have $f_{\phi\psi}\in\RR \Leftrightarrow f_\psi\in\RR \Leftrightarrow f_{\psi/\phi}\in\RR$. On the other hand, $\phi(q) \to \infty$ as $q\to\infty$. Thus
\[
\text{(A)} \Rightarrow \text{(C)}_{\psi = \phi\psi} \Rightarrow \text{(D)}_{\psi = \phi\psi} \Leftrightarrow \text{(D)} \Leftrightarrow \text{(D)}_{\psi = \psi/\phi} \Rightarrow \text{(B)}_{\psi = \psi/\phi} \Rightarrow \text{(C)}.
\]

\draftnewpage
\section{Open questions}
\label{sectionopenquestions}

In this paper, we consider only ``everywhere'' questions - that is, we are interested in functions $\psi$ for which $C_{H,\psi}(\xx) < \infty$ for every point $\xx\in\R^d\butnot\Q^d$. The same questions can be asked if ``every'' is replaced by ``almost every'' - with respect to Lebesgue measure or even with respect to some fractal measure. Once we know what ``almost every'' point does, it can be asked what is the Hausdorff dimension of the set of exceptions, i.e. the set of $\xx$ which behave differently from almost every point. In the case of the height function $\Hlcm$, such questions have been extensively studied. Thus, the next step in producing a Diophantine theory of the height functions $\Hmax$, $\Hmin$, and $\Hprod$ similar to that for $\Hlcm$ would be to answer the following questions:

\begin{question}[Analogue of Khinchin's theorem]
Fix $\Theta\in\{\tmax,\tmin,\tprod\}$, and let $\psi$ be a Hardy $L$-function. Must the sets $\{\xx\in\R^d : C_{H_\Theta,\psi}(\xx) = 0\}$ and $\{\xx\in\R^d : C_{H_\Theta,\psi}(\xx) < \infty\}$ be either null sets or full measure sets? If so, which one? Can the same theorem be proven with a weaker assumption than $\psi$ being a Hardy $L$-function (for example, assuming only that $\psi$ is decreasing)?
\end{question}

\begin{question}[Analogue of the Jarn\'ik--Besicovitch theorem]
With $\Theta$ and $\psi$ as before, what is the Hausdorff dimension of the set $\{\xx\in\R^d : C_{H_\Theta,\psi}(\xx) = 0\}$?
\end{question}

\begin{question}[Analogue of the Jarn\'ik--Schmidt theorem]
With $\Theta$ and $\psi$ as before, what is the Hausdorff dimension of the set $\{\xx\in\R^d : C_{H_\Theta,\psi}(\xx) > 0\}$? Does this set have large intersections with nice fractals?
\end{question}

\draftnewpage
\appendix
\section{Hardy fields}
\label{sectionhardyfields}

In this appendix we briefly recall the definition of a Hardy field and its basic properties.

Given $f:(t_0,\infty)\to\R$ and $g:(t_1,\infty)\to\R$, we write $f\sim g$ if $f(x) = g(x)$ for all sufficiently large $x$.
%Let $\FF$ denote the set of all continuous functions of the form $f:(t_0,\infty)\to\R$ for some $t_0\in\R$. Given $f,g\in\FF$, write $f\sim g$ if $f(x) = g(x)$ for all sufficiently large $x$. The set $\FF/\sim$ is called the set of \emph{germs at infinity}. Note that if $[f],[g]\in \FF/\sim$, then the classes $[f + g]$, $[f - g]$, $[fg]$, and $[f/g]$ are well-defined if they exist. 

\begin{definition}
A \emph{Hardy field} is a collection of continuous functions\Footnote{Hardy fields are usually defined as collections of \emph{germs at infinity} rather than as collections of functions, but this distinction makes little difference in practice.} $\HH$ with the following properties:
\begin{itemize}
\item[(I)] For each $f\in\HH$, there exists $t_0\in\R$ such that $f:(t_0,\infty)\to\R$.
\item[(II)] Given $f,g\in\HH$, there exist $h_1,h_2,h_3,h_4,h_5\in\HH$ such that $f + g \sim h_1$, $f - g \sim h_2$, $fg \sim h_3$, $f/g \sim h_4$, and $f' \sim h_5$.
\end{itemize}
\end{definition}

The two primary examples of Hardy fields are the field of rational functions and the field of Hardy $L$-functions, described in the introduction. The fact that the collection of Hardy $L$-functions forms a Hardy field was proven by G. H. Hardy \cite[Theorem 1]{Hardy2}.

The most important fact about Hardy fields follows almost directly from the definition:

\begin{lemma}
\label{lemmahardy}
If $f,g\in\HH$ then either
\[
f(x)\geq g(x) \text{ for all $x$ sufficiently large}
\]
or
\[
f(x)\leq g(x) \text{ for all $x$ sufficiently large.}
\]\
\end{lemma}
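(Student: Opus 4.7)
The approach is to reduce the lemma to the standard fact that every nonzero element of a Hardy field has eventually constant sign. First, by property (II) of the definition of a Hardy field, there exists $h \in \HH$ with $h \sim f - g$. Since the two inequalities in the conclusion depend only on the behavior of $f - g$ for large $x$, it suffices to show that $h$ is either eventually nonnegative or eventually nonpositive. If $h \sim 0$, then $f(x) = g(x)$ for all sufficiently large $x$ and both conclusions hold trivially, so I may assume $h$ takes nonzero values at arbitrarily large $x$.

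The heart of the argument is to upgrade this to the statement that $h$ is nonzero on some tail $(t_0,\infty)$. For this I apply (II) to the pair $(h,h)$ to obtain $h_4 \in \HH$ with $h/h \sim h_4$. The left-hand side is undefined on the zero set of $h$, so for the pointwise equivalence $h(x)/h(x) = h_4(x)$ to hold for every sufficiently large $x$, the function $h$ must be nonzero on a tail. Once eventual nonvanishing is established, continuity of $h$ (guaranteed by property (I)) combined with the intermediate value theorem forces $h$ to have constant sign on $(t_0,\infty)$. Translating back, either $f(x) \geq g(x)$ for all large $x$ or $f(x) \leq g(x)$ for all large $x$, finishing the argument.

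The main obstacle is the eventual-nonvanishing step. It requires interpreting the relation $\sim$ as genuine pointwise equality on a tail, and reading the paper's definition as implicitly excluding elements of $\HH$ which are neither $\sim 0$ nor eventually nonvanishing — the analogue of the "no zero divisors" property that makes a Hardy field into a field in the germ-based formulation. Under that reading, which matches Hardy's original setup referenced by the authors via \cite{Hardy2}, the remainder of the proof reduces to elementary continuity considerations, and no appeal to differentiability or to the closure under the other operations in (II) is needed.
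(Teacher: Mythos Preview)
Your proof is correct and follows essentially the same route as the paper's: form the difference, invoke closure under division from property~(II) to force eventual nonvanishing, then use continuity and the intermediate value theorem. The paper takes $h \sim g - f$ and applies (II) to get $j \sim 1/h$, whereas you apply (II) to $h/h$; both extract the same conclusion that $h$ is nonzero on a tail, and your version has the minor advantage of not needing the constant $1$ to lie in~$\HH$. Your explicit handling of the $h \sim 0$ case and your discussion of how (II) must be read are more careful than the paper's own treatment, which leaves those points implicit.
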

\begin{proof}
Write $h\sim g - f$ for some $h\in\HH$. Then there exists a function $j\in\HH$ such that $j \sim 1/h$. It follows that $h(x)\neq 0$ for all sufficiently large $x$. Since $h$ is continuous, the conclusion follows.
\end{proof}

The following well-known lemma says that in a Hardy field, we can take the derivative of an inequality.

\begin{lemma}
\label{lemmahardy2}
If $f,g\in\HH$, $0\leq f(x)\leq g(x)$ for all $x$ sufficiently large, and $g(x)\to 0$, then
\[
|f'(x)| \leq |g'(x)| \text{ for all $x$ sufficiently large}.
\]
\end{lemma}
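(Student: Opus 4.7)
The plan is to use Lemma \ref{lemmahardy} three times: first to pin down the signs of $g'$ and $f'$ for large $x$, then to pin down the sign of $(g-f)'$. The key observation is that for any $h \in \HH$ with $h \geq 0$ eventually and $h(x) \to 0$, one must have $h' \leq 0$ eventually. Indeed, $h' \in \HH$ (up to equivalence), so by Lemma \ref{lemmahardy} applied to $h'$ and the zero function, either $h' \geq 0$ eventually or $h' \leq 0$ eventually. In the former case $h$ is eventually nondecreasing and tends to $0$ from above, forcing $h \equiv 0$ eventually, and hence $h' \equiv 0$ eventually; in either case $h' \leq 0$ eventually.

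Applying this observation to $g$ itself shows $g' \leq 0$ for large $x$, so $|g'| = -g'$. Since $0 \leq f \leq g$ eventually and $g \to 0$, we also have $f \to 0$, so the same observation applied to $f$ gives $f' \leq 0$ eventually and $|f'| = -f'$. Finally, since $\HH$ is closed under subtraction up to equivalence, the function $h := g - f$ agrees eventually with an element of $\HH$; it is nonnegative eventually (by hypothesis) and tends to $0$, so by the observation $h' = g' - f' \leq 0$ eventually, which is exactly $|f'| = -f' \leq -g' = |g'|$.

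There is no real obstacle here beyond being careful about the degenerate case where an eventually-nonnegative function in $\HH$ tending to zero might actually equal zero identically (forcing derivatives to vanish), which is why the trichotomy built into Lemma \ref{lemmahardy} is used to force nonincreasing behavior rather than derived from strict monotonicity. The whole argument is a three-line consequence of the Hardy-field trichotomy applied to $g$, $f$, and $g-f$.
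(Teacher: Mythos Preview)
Your proof is correct and follows essentially the same approach as the paper's: both arguments hinge on the observation that a nonnegative element of $\HH$ tending to zero must eventually have nonpositive derivative, applied to $f$ and to (an element equivalent to) $g-f$, after which the inequality $|f'|\leq|g'|$ falls out by rearranging. The only differences are cosmetic: the paper leaves the ``eventually decreasing'' observation unjustified, whereas you spell it out via Lemma~\ref{lemmahardy}, and you apply the observation to $g$ separately, which is in fact redundant once you know $f'\leq 0$ and $(g-f)'\leq 0$.
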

\begin{proof}
Write $h \sim g - f$ for some $h\in\HH$; then $0\leq f(x),h(x)$ for all $x$ sufficiently large, and $f(x),h(x)\to 0$. It follows that $f$ and $h$ are eventually decreasing, i.e. $f'(x),h'(x) \leq 0$ for all $x$ sufficiently large. Rearranging completes the proof.
\end{proof}

One last lemma which we needed in verifying the hypotheses of Lemma \ref{lemmacondition} (cf. Remark \ref{remarkcondition}):

\begin{lemma}
\label{lemmahardy3}
If $f\in\HH$ satisfies $Cx \geq f(x) \to \infty$ for some $C > 0$, then $f$ is uniformly continuous and increasing.
\end{lemma}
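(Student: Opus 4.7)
The plan is to prove monotonicity and boundedness of $f'$, with uniform continuity following from the latter. Both statements are obtained by invoking the trichotomy of Lemma \ref{lemmahardy} applied to appropriate members of $\HH$.

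First I would show that $f$ is eventually increasing. By property (II) of a Hardy field there exists $h\in\HH$ with $f'\sim h$, so $f'$ agrees with an element of $\HH$ on some interval $(t_1,\infty)$. Applying Lemma \ref{lemmahardy} to $h$ and the zero function, $f'$ has a constant sign on some such interval. If the sign were nonpositive, then $f$ would be decreasing on $(t_1,\infty)$, contradicting $f(x)\to\infty$. Hence $f'\geq 0$ eventually, so $f$ is eventually increasing; combined with continuity on $(t_0,\infty)$ this gives the monotonicity assertion.

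Next I would show $f'$ is bounded above. Applying the same trick to $f''$, we see that $f'$ is itself eventually monotonic. If $f'$ is eventually decreasing, it is bounded (being nonnegative and decreasing). If $f'$ is eventually increasing, Lemma \ref{lemmahardy} applied to $f'$ and any constant $K$ shows that either $f'$ stays below some constant (done) or $f'\geq K$ eventually for every $K$. In the latter case, integrating gives $f(x)\geq Kx/2$ for all large $x$, contradicting the hypothesis $f(x)\leq Cx$ as soon as $K>2C$. Thus there exists $M>0$ with $0\leq f'(x)\leq M$ for all $x$ sufficiently large.

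Finally, on some interval $(t_1,\infty)$ the bound on $f'$ makes $f$ Lipschitz with constant $M$, hence uniformly continuous there. Since $f$ is continuous on $(t_0,\infty)$ and therefore uniformly continuous on any compact subinterval, we get uniform continuity on $(t_0,\infty)$ (or on $(t_1',\infty)$ for any $t_1'>t_0$, which is all the applications need). The only step requiring care is the dichotomy argument for $f'$ when $f'$ is eventually increasing, since one must rule out the possibility $f'\to\infty$; this is the only place where the linear upper bound $f(x)\leq Cx$ is actually used, and I expect it to be the main (though still modest) obstacle.
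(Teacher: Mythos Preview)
Your argument is correct. The paper's route is shorter: rather than passing to $f''$ and ruling out $f'\to\infty$ by integration, it appeals directly to Lemma~\ref{lemmahardy2} (differentiation of an inequality in a Hardy field) with $g(x)=Cx$ to conclude $|f'(x)|\leq C$ in one stroke, and then uses the sign trichotomy, exactly as you do, to get $f'\geq 0$ from $f\to\infty$. Your approach relies only on Lemma~\ref{lemmahardy} and basic calculus, which is arguably cleaner here since Lemma~\ref{lemmahardy2} as stated assumes $g(x)\to 0$ rather than $g(x)\to\infty$; the paper is implicitly using the obvious symmetric variant. Incidentally, once you know $f'$ is eventually monotone you do not even need Lemma~\ref{lemmahardy} for your dichotomy: an eventually increasing nonnegative $f'$ either has a finite limit (hence is bounded) or tends to $\infty$, and only the latter must be excluded via your integration argument.
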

\begin{proof}
By Lemma \ref{lemmahardy2}, we have $|f'(x)| \leq C$, i.e. $|f'|$ is uniformly bounded. This implies that $f$ is uniformly continuous. On the other hand, by Lemma \ref{lemmahardy2} we have either $f'(x) \geq 0$ for all sufficiently large $x$, or $f'(x) \leq 0$ for all sufficiently large $x$. The second case is ruled out since $f(x)\to\infty$, so $f$ is increasing.
\end{proof}

\bibliographystyle{amsplain}

\bibliography{bibliography}

\end{document}